\newcommand*{\wt}{\widetilde}
\newcommand*{\ol}{\overline}
\newcommand*{\cA}{\mathcal{A}}
\newcommand*{\cF}{\mathcal{F}}
\newcommand*{\cL}{\mathcal{L}}
\newcommand*{\cS}{\mathbb{S}}
\newcommand*{\N}{\mathbb{N}}
\newcommand*{\R}{\mathbb{R}}
\DeclareMathOperator*{\essinf}{ess\,inf}
\newcommand*{\lam}{\lambda}
\newcommand*{\diag}{\operatorname{diag}}
\newcommand*{\sgn}{\operatorname{sgn}}
\newcommand*{\mD}{\widetilde{D}}
\newcommand*{\aphi}{\widetilde{\nu}}
\newcommand*{\scH}{\mathcal{H}}
\newcommand*{\scHhat}{\widehat{\mathcal{H}}}
\newcommand*{\fvJ}{J^{fv}}
\newcommand*{\pmJ}{J^{pm}}
\newcommand*{\LQJ}{J}
\newcommand*{\LQJhat}{\hat{J}}
\newcommand*{\OO}{O}
\newcommand*{\risk}{\Xi}
\newcommand*{\kap}{\kappa}
\newcommand*{\KK}{\mathscr{R}}
\renewcommand*{\lam}{\boldsymbol{\lambda}}
\newcommand*{\diagmu}{\boldsymbol{\mu}}
\newcommand*{\diagsigma}{\boldsymbol{\sigma}}
\newcommand*{\md}{\mathbf{d}}
\newcommand*{\procB}{F}
\newtheorem{theo}{Theorem}[section]
\newtheorem{lemma}[theo]{Lemma}
\newtheorem{propo}[theo]{Proposition}
\newtheorem{corollary}[theo]{Corollary}
\theoremstyle{definition}
\newtheorem{ex}[theo]{Example}
\newtheorem{remark}[theo]{Remark}
\newtheorem{setting}[theo]{Setting}
\newtheorem{assumption}[theo]{Assumption}
\title{Multi-asset optimal trade execution with stochastic cross-effects: An Obizhaeva--Wang-type framework}
\author{Julia Ackermann\thanks{Department of Mathematics \& Informatics, University of Wuppertal, Gaußstr.~20, 42119 Wuppertal, Germany.
		\emph{Email:} jackermann@uni-wuppertal.de}
	\and Thomas Kruse\thanks{Department of Mathematics \& Informatics, University of Wuppertal, Gaußstr.~20, 42119 Wuppertal, Germany.
		\emph{Email:} tkruse@uni-wuppertal.de}
	\and Mikhail Urusov\thanks{Faculty of Mathematics, University of Duisburg-Essen, Thea-Leymann-Str.~9, 45127 Essen, Germany.
		\emph{Email:} mikhail.urusov@uni-due.de
}}
\begin{document}

\maketitle

\begin{abstract}
	We analyze a continuous-time optimal trade execution problem in multiple assets
	where the price impact and the resilience can be matrix-valued stochastic processes that incorporate cross-impact effects. 
	In addition, we allow for stochastic terminal and running targets. 
	Initially, we formulate the optimal trade execution task as a stochastic control problem with a finite-variation control process that acts as an integrator both in the state dynamics and in the cost functional. 
	We then extend this problem continuously to a stochastic control problem with progressively measurable controls. 
	By identifying this extended problem as equivalent to a certain linear-quadratic stochastic control problem, we can 
	use established results in linear-quadratic stochastic control to solve the extended problem.  
	This work generalizes [Ackermann, Kruse, Urusov; FinancStoch'24] 
	from the single-asset setting to the multi-asset case. 
	In particular, we reveal cross-hedging effects, showing that it can be optimal to trade in an asset despite having no initial position. 
	Moreover, as a subsetting we discuss 
	a multi-asset variant of the model in [Obizhaeva, Wang; JFinancMark'13].

	\bigskip
	
	\emph{Keywords:}
	multi-asset optimal trade execution; cross-impact; stochastic price impact; stochastic resilience;  
	continuous-time stochastic optimal control;
	linear-quadratic stochastic control; 
	backward stochastic differential equation; 
	continuous extension of cost functional
	
	\smallskip
	
	\emph{2020 MSC:}
	91G10; 93E20; 60H10; 60G99

	\smallskip
	
	\emph{JEL:}
	C02; G10; G11
\end{abstract}

\tableofcontents

\section{Introduction}

Cross-impact describes the phenomenon that trading in a financial asset not only affects its own price but also the prices of other assets. 
The implications of this 
effect have drawn increasing attention in the literature on price manipulation and optimal trade execution in multi-asset models.  
Studies analyzing 
cross-impact in the context of 
multidimensional optimal trade execution 
include, for example, 
Alfonsi et al.~\cite{AlfonsiKloeckSchied2016}, 
Horst \& Xia \cite{horst2019multi},
Tsoukalas et al.~\cite{TsoukalasGieseckeWang2019},
Abi Jaber et al.~\cite{jaber2024optimal},
Bertsimas \& Lo \cite[Section~5]{bertsimas1998optimal},
Bertsimas et al.~\cite{bertsimaslohummel1999},
Almgren \& Chriss \cite[Appendix~A]{almgren2001optimal},
and 
Ma et al.\ \cite{ma2020optimal}.
Further works that investigate trading in cross-impact models include, for instance, 
Bilarev \cite[Chapter~5]{bilarev2018feedback}, 
Schneider \& Lillo \cite{SchneiderLillo2019}, 
G\^arleanu \& Pedersen \cite{garleanu2016dynamic}, 
Huberman \& Stanzl \cite[Section~5]{huberman2004price}, 
Muhle-Karbe \& Tracy \cite{muhlekarbe2024stochastic}, 
and Hey et al.\ \cite{hey2024concave}.

At the same time, a growing body of research analyzes the role of stochastic liquidity in single-asset optimal trade execution problems. Contributions in this area include Almgren \cite{almgren2012optimal},
Schied \cite{schied2013control}, 
Ankirchner et al.\ \cite{ankirchner2014bsdes},
Cheridito \& Sepin \cite{cheridito2014optimal},
Ankirchner \& Kruse \cite{ankirchner2015optimal},
Graewe et al.\ \cite{graewe2015non},
Kruse \& Popier \cite{kruse2016minimal},
Horst et al.\ \cite{horst2016constrained},
Graewe \& Horst \cite{graewe2017optimal},
Bank \& Voß \cite{bank2018linear},
Graewe et al.\ \cite{graewe2018smooth},
Popier \& Zhou \cite{popier2019second}, and
Ankirchner et al.\ \cite{ankirchner2020optimal}. 
These works extend the market impact models of Almgren \& Chriss \cite{almgren2001optimal} and Bertsimas \& Lo \cite{bertsimas1998optimal} 
by incorporating stochastic liquidity parameters. 
Moreover, Ackermann et al. \cite{ackermann2020optimal,ackermann2020cadlag,ackermann2021negativeresilience,ackermann2022reducing} 
analyze the effects of randomly evolving order book depth and resilience 
in extended variants of the models of 
Alfonsi \& Acevedo \cite{alfonsi2014optimal}, 
Bank \& Fruth \cite{bank2014optimal}, 
Fruth et al.\ \cite{fruth2014optimal,fruth2019optimal}, 
Siu et al.\ \cite{siu2019optimal},  
Alfonsi et al.\ \cite{alfonsi2008constrained}, and 
Obizhaeva \& Wang \cite{obizhaeva2013optimal}. 
Furthermore, Horst \& Kivman \cite{HorstKivman2024} 
allow for stochastic resilience while examining the optimal portfolio process for small instantaneous price impact factors.

A paper that analyzes the combined effects of cross-impact and stochastic liquidity characteristics in multi-asset optimal trade execution problems is Horst \& Xia \cite{horst2019multi}. The authors address optimal trade execution with both instantaneous and persistent price impact, thereby incorporating cross-impact, as well as stochastic
resilience, using backward stochastic Riccati differential equations. They restrict attention to deterministic price impacts, allowing only the resilience to evolve stochastically.

Furthermore, Ma et al.\ \cite{ma2020optimal} set up a price impact model in the spirit of Almgren \& Chriss \cite{almgren2001optimal} for trading in multiple assets that incorporates stochastic permanent and instantaneous price impact via a finite-state Markov process while omitting resilience. The authors solve an optimal trade execution problem by analyzing coupled Riccati differential equations.

Moreover, Muhle-Karbe \& Tracy \cite{muhlekarbe2024stochastic}
analyze price manipulation and optimization of the expected profits and losses
in a linear stochastic liquidity model with external order flow. The resilience is a 
constant matrix, whereas the price impact is a matrix-valued stochastic process. 
The authors show that such a model arises as the scaling limit of a discrete-time cross-impact model with deterministic concave price impact and semimartingale order flow.

The objective of the present paper is to investigate 
optimal trade execution in a multi-asset model 
where both cross-impact and resilience vary stochastically.
To achieve this, we extend the model of Ackermann et al.\ \cite{ackermann2022reducing} (except for the fact that we set $\eta=0$ in the notation of \cite{ackermann2022reducing}) to a multi-asset setting.

To formalize the model we consider a time horizon $T\in (0,\infty)$, a number of assets $n\in\N$, and a filtered probability space $(\Omega,\cF_T,(\cF_t)_{t\in[0,T]}, P)$ that satisfies the usual conditions. 
We consider an agent (typically a large institutional trader) whose trading activities are described by an $\R^n$-valued stochastic process~$X$, which tracks the positions in the $n$ assets. More specifically, $X_j(s)$ is the position in the $j$-th asset at the time $s \in [0,T]$ ($X_j(s)>0$ means a long position of $X_j(s)$ shares and $X_j(s)<0$ a short position of $-X_j(s)$ shares in the $j$-th asset at time $s\in [0,T]$). Immediately prior to the initial time $0$ the agent has the initial position $x\in\R^n$.
At the terminal time~$T$ the agent needs to reach a certain target position which is given by an $\R^n$-valued, $\cF_T$-measurable random variable~$\xi$. These two requirements are reflected by the boundary conditions $X(0-)=x$ and $X(T)=\xi$.

As a starting point we consider the finite-variation problem in which the agent chooses $X$ from the set of admissible strategies called 
$\cA^{fv}$, which is the set of $\R^n$-valued c\`adl\`ag finite-variation processes $X=(X(s))_{s\in[0,T]}$ that satisfy $X(0-)=x$, $X(T)=\xi$, and certain integrability conditions (see \cref{sec:fv_problem} for details). 
We assume an additive price impact model where the underlying unaffected price processes of the assets are suitable martingales, and we thus  
focus on the price deviation 
that stems from the trading activities of the agent which we model by
\begin{equation}\label{eq:deviationdynmultivariate}
	dD^X(s) = - \rho(s) D^X(s) ds + \gamma(s) dX(s), \quad s \in [0,T], \quad 
	D^X(0-)=d.
\end{equation}
The initial deviation immediately prior to the time~$0$ is given by $D^X(0-)=d \in \R^n$ (usually $d=0\in\R^n$). Note that according to \eqref{eq:deviationdynmultivariate} a trade in some asset, that is, a change of the position~$X$, has a linear influence on~$D^X$ with the matrix-valued price impact process~$\gamma$ as a factor.
We emphasize that we do not assume $\gamma$ to be diagonal\footnote{see ~\eqref{eq:dyn_lambda} and~\eqref{eq:def_gamma} for the definition of $\gamma$} implying 
that the deviation in a certain asset can also be affected by the trades in the other assets. 

To further discuss the price deviation, 
observe that the term $-\rho(s) D^X(s) ds$, 
involving the stochastic matrix-valued resilience process~$\rho$,
governs the evolution of $D^X$ when the agent is not trading.
If $\rho$ is  a.s.\ symmetric and uniformly positive definite in time,
this term induces an exponential decay of the price deviation to zero.
This reflects that, following a trade,
new orders gradually replenish the order book -- a phenomenon known as the resilience effect. We emphasize, however, that our results do not require $\rho$ to be  a.s.\ symmetric and uniformly positive definite in time. Instead, we impose only weaker assumptions (see, e.g., \cref{assump_convexity} and \cref{rem:sufficient_cond_convex}). In particular, our framework also accommodates models where the price
impact exhibits self-exciting behavior. For a discussion of qualitative effects of such negative resilience in the single-asset situation, we refer to \cite{ackermann2021negativeresilience}. 
Furthermore, note that $\rho$ is not necessarily diagonal.

In addition to reaching the required terminal position $\xi$, the aim of the agent is to incur minimal costs associated with her trading activities.
Let us consider 
the costs of a block trade at the time $s\in[0,T]$ when trading according to a strategy $X \in \cA^{fv}$ with the associated deviation $D^X$.  
Observe that immediately prior to $s$ we have the deviation $D^X(s-)$. By \eqref{eq:deviationdynmultivariate} the block trade $\Delta X(s)$ shifts the deviation $D^X(s-)$ to 
$$D^X(s)=D^X(s-)+\Delta D^X(s) = D^X(s-)+ \gamma(s)\Delta X(s).$$ 
We next take the mid-prices and multiply the mid-prices by the amount of the traded shares. 
The block trade $\Delta X(s)$ is thus assigned the costs  
\begin{equation*}
	\begin{split}
		\big(D^X(s-)+\tfrac12 \gamma(s) \Delta X(s)\big)^\top \Delta X(s)
		& = D^X(s-)\Delta X(s-) + \tfrac12 (\Delta X(s))^\top \gamma(s) \Delta X(s) .
	\end{split}
\end{equation*}
This explanation for the costs of a single block trade motivates the definition of the pathwise costs $C(X)$ over the whole trading interval $[0,T]$ by 
\begin{equation}\label{eq:def_pathwisecosts}
	C(X) = 
	\int_{[0,T]} (D^X(s-))^\top\, dX(s) + \frac12 \int_{[0,T]} (\Delta X(s))^\top \gamma(s)\, dX(s) .
\end{equation}
Then $E[C(X)]$ describes the expected costs due to illiquidity when trading according to the finite-variation execution strategy~$X$.  
This is one of the two summands in the definition of the costs 
\begin{equation}\label{eq:cost_fct_intro}
	\begin{split}
		\fvJ(X) 
		& = E[C(X)] + E\bigg[\int_0^T (X(s)-\zeta(s))^\top  \risk(s) (X(s)-\zeta(s)) ds \bigg]  
	\end{split}
\end{equation}
that are to be minimized. 
The other summand in these costs  
can be used to incorporate some kind of risk preference into the model via the choice of the matrix-valued, not necessarily diagonal, process
$\risk$, which acts as a penalization, and the $\R^n$-valued process~$\zeta$, which acts as a running target. 
We refer to Horst \& Xia \cite[Section~1.1]{horst2019multi} for an illustrative example of a possible choice for 
$\risk$.

To summarize, the stochastic control problem to minimize the costs~\eqref{eq:cost_fct_intro} over all strategies $X \in \cA^{fv}$ 
models the agent's task to reach the terminal position $\xi$ from the initial position $x$ while incurring minimal costs. The framework can incorporate cross-effects between the $n$ assets through the price impact~$\gamma$, the resilience~$\rho$, and the risk preference~$\risk$, 
any of which can produce cross-hedging effects in the optimal strategy (see, e.g., the examples in \cref{sec:examples} and \cref{sec:appendixcrossresilience,sec:example_risk,sec:example_gamma}).

Since optimal strategies in the class $\cA^{fv}$ do not always exist (see also \cref{rem:pmvsfv}), we continuously extend the stochastic control problem from finite-variation
strategies to progressively measurable strategies, using similar techniques as in \cite{ackermann2022reducing}. 
We refer to \cref{sec:pm_problem} for the formulation of the extended problem, which is based on the representation obtained in \cref{propo:rewritten_costs_and_deviation} and \cref{lem:rewritten_costs3} for the finite-variation price deviation and the finite-variation costs. 
The main result with regard to the extension is \cref{thm:contextcostfct}, where we show (i) that admissible progressively measurable strategies can be approximated, in a suitable metric (see~\eqref{eq:def_metric}), by a sequence of admissible finite-variation strategies and (ii) that the costs converge. 

An aspect important for establishing \cref{thm:contextcostfct} and for solving the extended problem is the insight that the extended problem is equivalent to a ``standard'' linear-quadratic stochastic control problem (see \cref{sec:equivLQ}) with a bijection (see~\eqref{eq:def_phi_b} and \cref{lem:bijective}) between the set of admissible progressively measurable trade execution strategies and the set of progressively measurable square-integrable controls. 
In the single-asset case, this has been already observed in \cite{ackermann2022reducing}. 
To solve the extended problem, we first solve the ``standard'' linear-quadratic stochastic control problem by using results from Sun et al.\ \cite{sun2021indefiniteLQ}. 
In \cref{cor:soln_opt_trade_execution} we then conclude that, under appropriate assumptions, there exists a unique optimal progressively measurable trade execution strategy for the extended problem. 
We moreover provide a feedback-type formula for the optimal strategy in terms of the solution of a Riccati-type backward stochastic differential equation.

Moreover, we analyze as a subsetting of our model a multi-asset variant of the model by Obizhaeva \& Wang~\cite{obizhaeva2013optimal} 
(see \cref{sec:examples}, \cref{sec:multiasset_OW}, and \cref{sec:appendixcrossresilience}). 
In the literature, there are further multi-asset models that can be seen as generalizing Obizhaeva \& Wang~\cite{obizhaeva2013optimal} to multiple assets. 
For instance, optimal trade execution in such kinds of models has been investigated in 
Tsoukalas et al.~\cite{TsoukalasGieseckeWang2019}, 
Alfonsi et al.~\cite{AlfonsiKloeckSchied2016},
and Abi Jaber et al.~\cite{jaber2024optimal}. 
Note that, different from our set-up, Tsoukalas et al.~\cite{TsoukalasGieseckeWang2019} consider a discrete-time framework and describe the transient price impact and the resilience by diagonal matrices. 
Cross-impact in their model is introduced via the permanent price impact and correlations in the fundamental prices, which affect the model via risk aversion. 

Alfonsi et al.~\cite{AlfonsiKloeckSchied2016} analyze price manipulation and optimal trade execution in a multi-asset propagator model with deterministic decay kernels.
Whereas 
Alfonsi et al.~\cite{AlfonsiKloeckSchied2016} 
restrict their study to decay kernels of convolution type (and thus constant price impact), Abi Jaber et al.~\cite{jaber2024optimal} allow for more general, yet still deterministic, decay kernels.
Cross-impact in \cite{jaber2024optimal} is implemented via the decay kernel, the instantaneous price impact matrix, and correlations in the semimartingale fundamental prices. 
The authors study multi-asset portfolio choice and in this framework 
consider optimal liquidation by penalizing large terminal inventories (see \cite[Remark~2.10]{jaber2024optimal}), but do not require strict liquidation, in contrast to Tsoukalas et al.~\cite{TsoukalasGieseckeWang2019}, 
Alfonsi et al.~\cite{AlfonsiKloeckSchied2016}, and our work. 

Note that all of the works Tsoukalas et al.~\cite{TsoukalasGieseckeWang2019}, 
Alfonsi et al.~\cite{AlfonsiKloeckSchied2016},
and Abi Jaber et al.~\cite{jaber2024optimal} model liquidity and cross-impact effects by deterministic quantities within nonlinear models, whereas we allow for cross-impact from stochastic resilience and stochastic price impact in a linear-quadratic framework. 
Moreover, we incorporate stochastic terminal and running targets in our optimal trade execution problem.

\section{The multi-asset trade execution problem for finite-variation strategies}
\label{sec:multi_asset_fv}
\label{sec:notation_and_setting}

Throughout the whole article, we let $T\in(0,\infty)$ be the time horizon, $n\in\N$ the number of assets, $x \in \R^n$ the initial position, $d\in\R^n$ the initial price deviation, and $m\in\N$.  
We fix a filtered probability space $(\Omega,\cF_T,(\cF_s)_{s\in[0,T]}, P)$ that satisfies the usual conditions and supports an $m$-dimensional Brownian motion $W=(W_1,\ldots,W_m)^\top$.

\subsection{Notation} 
\label{sec:notation}

In this subsection we introduce some notations 
that we use in this work. 

The set of symmetric matrices in $\R^{n\times n}$ is denoted by $\cS^n$, 
the subset of positive semidefinite $(n\times n)$-matrices is denoted by $\cS^n_{\geq 0}$, and the subset of positive definite $(n\times n)$-matrices is denoted by $\cS^n_{> 0}$.   
The identity matrix in $\R^{n\times n}$ is denoted by $I_{n}$. 
For $n',m'\in\N$ and $A=(A_{i,j})_{(i,j)\in\{1,\ldots,n'\}\times \{1,\ldots,m'\}}\in\R^{n'\times m'}$ let $\lVert A\rVert_F = (\sum_{i=1}^{n'} \sum_{j=1}^{m'} \lvert A_{i,j} \rvert^2)^{\frac12}$ be the Frobenius norm. 

We introduce the following notations for $n',m',l\in\N$. 
For $s\in[0,T]$ and an $\R^{n'\times m'}$-valued c\`adl\`ag semimartingale $L$ we define $\Delta L(s) = L(s) - L(s-)$. 
We follow the convention that, 
for $t\in[0,T]$, $r \in [t,T]$, 
and a c\`adl\`ag semimartingale $L$, jumps of the c\`adl\`ag integrator $L$ at time $t$ contribute to integrals of the form $\int_{[t,r]} \ldots dL(s) \ldots$. 
In contrast, we write $\int_{(t,r]} \ldots dL(s) \ldots$ when we do not include jumps of $L$ at time $t$ into the integral. 
For suitable c\`adl\`ag semimartingales $A$, $B$, where $A$ is $\R^{n'\times m'}$-valued and $B$ is $\R^{m' \times l}$-valued, we denote by $[A,B]$ 
the $\R^{n'\times l}$-valued covariation. 

We define 
\begin{equation*}
	\begin{split}
		& \cL^2 = \bigg\{u \,\bigg\lvert\, u\colon [0,T]\times \Omega \to\R^n \text{ progr.\ measurable and }  \lVert u \rVert_{\cL^2}^2 = E\bigg[\int_0^T \lVert u(s) \rVert_F^2 ds\bigg] < \infty \bigg\} 
	\end{split}
\end{equation*}
and 
$L^2(\Omega,\cF_T,P;\R^n) =\big\{Y \,\big\lvert\, Y\colon\Omega\to\R^n\,\, \cF_T\text{-measurable and } \lVert Y \rVert_{L^2}^2 = E[\lVert Y\rVert_F^2]<\infty  \big\} .$

\subsection{The stochastic liquidity processes}\label{sec:setting}

In the following we introduce the stochastic resilience and price impact processes and some related quantities that we need to set up and solve the optimal trade execution problem. 

Let $\rho=(\rho(s))_{s\in[0,T]}$ be an $\R^{n\times n}$-valued progressively measurable process which is $dP\times ds$-a.e.\ bounded. We call $\rho$ the \textit{resilience process}. 
Let $\nu=(\nu(s))_{s\in[0,T]}$ be the unique solution of 
\begin{equation}\label{eq:defnu}
	d\nu(s) = \nu(s) \rho(s) ds, \quad s \in [0,T], \quad \nu(0)=I_n, 
\end{equation}
and observe that the inverse $\nu^{-1}=(\nu^{-1}(s))_{s\in[0,T]}$ is 
the unique solution of 
\begin{equation}\label{eq:eqnuinv}
	d\nu^{-1}(s) = - \rho(s) \nu^{-1}(s) ds, \quad s\in[0,T], \quad \nu^{-1}(0) =I_n. 
\end{equation}

Next, we construct the price impact process. To this end let $\mu=(\mu(s))_{s\in[0,T]}$ be an $\R^n$-valued progressively measurable process and let $\sigma=(\sigma(s))_{s\in[0,T]}$ be an $\R^{n\times m}$-valued progressively measurable process.  
Assume that $\mu$ and $\sigma$ are $dP\times ds$-a.e.\ bounded. 
For each $j\in\{1,\ldots,n\}$ let 
$\lambda_j=(\lambda_j(s))_{s\in[0,T]}$ 
be the $(0,\infty)$-valued stochastic process given as the unique solution of 
\begin{equation}\label{eq:dyn_lambda}
	d\lambda_j(s) = \lambda_j(s) \mu_{j}(s) ds + \sum_{k=1}^m  \lambda_j(s) \sigma_{j,k}(s) dW_k(s) , \quad s\in[0,T], \quad \lambda_j(0)\in(0,\infty). 
\end{equation} 
For every $\alpha \in\R$ let $\lam^{\alpha}=(\lam^{\alpha}(s))_{s \in [0,T]}$ be the $\cS_{>0}^n$-valued stochastic process given by 
\begin{equation}\label{eq:def_lam}
	\lam^{\alpha}(s) = \diag((\lambda_1(s))^\alpha,\ldots,(\lambda_n(s))^\alpha), 
	\quad s\in[0,T]. 
\end{equation}
Furthermore, let $\diagmu=(\diagmu(s))_{s\in[0,T]}$ 
be defined by 
\begin{equation*}
	\diagmu(s) = \diag(\mu_1(s),\ldots,\mu_n(s)), 
	\quad s \in [0,T], 
\end{equation*}
and for all $k\in\{1,\ldots,m\}$ let 
$\diagsigma_k=(\diagsigma_{k}(s))_{s\in[0,T]}$ 
be defined by 
\begin{equation*}
	\diagsigma_{k}(s) = \diag(\sigma_{1,k}(s),\ldots,\sigma_{n,k}(s)), 
	\quad s\in [0,T] .
\end{equation*}
Let $\OO \in\R^{n\times n}$ be an orthogonal matrix 
(that is, $\OO^\top\OO=I_n=\OO\OO^\top$) 
and let $\gamma=(\gamma(s))_{s\in[0,T]}$
be the $\cS_{>0}^n$-valued continuous semimartingale 
given by 
\begin{equation}\label{eq:def_gamma}
	\gamma(s) = \OO^\top \lam(s) \OO, \quad s\in[0,T]. 
\end{equation}
We call $\gamma$ the \textit{price impact process}. 
For all $\alpha \in\R$ we denote by $\gamma^{\alpha} = (\gamma^{\alpha}(s))_{s\in[0,T]}$
the $\cS_{>0}^n$-valued continuous semimartingale 
given by 
\begin{equation}\label{eq:def_gamma_alpha}
	\gamma^{\alpha}(s) = \OO^\top \lam^{\alpha}(s) \OO, \quad s\in[0,T].
\end{equation}
We assume that 
$\gamma^{-\frac12} \rho \gamma^{\frac12}$ 
is $dP\times ds$-a.e.\ bounded.

\begin{remark}\label{rem:remark_on_setting_assumptions1}
	(i)
	The assumption that $\gamma$ is symmetric is necessary for the well-posedness of our trade execution problem. 
	Indeed, in 
	\cref{ex:gamma_sym} in the appendix  
    we consider a situation with a non-symmetric price impact~$\gamma$ and show that the execution costs can then be 
	pushed towards $-\infty$. 
	Note that for every $\cS^n$-valued process $\wt \gamma$ and for all $\omega\in\Omega$, $s\in[0,T]$ we can find an orthogonal matrix $\wt\OO(s,\omega)\in\R^{n\times n}$ and $\wt\lambda_j(s,\omega)\in \R$, $j\in\{1,\ldots,n\}$, such that $\wt \gamma(s,\omega) = \wt\OO(s,\omega)^\top \diag(\wt\lambda_1(s,\omega),\ldots,\wt\lambda_n(s,\omega)) \wt\OO(s,\omega)$. 
	In our above definition of~$\gamma$ we assume that $\wt \OO = \OO$ is deterministic and constant and that the eigenvalues $\wt\lambda_j=\lambda_j$, $j\in\{1,\ldots,n\}$, of $\gamma$ are strictly positive and follow 
	suitable, possibly stochastic, dynamics 
	(see~\eqref{eq:dyn_lambda}).
	
	(ii)
	In \cite{ackermann2022reducing}, the assumptions on the resilience and the price impact consist of the following: 
	1.\ the resilience is a $dP\times ds$-a.e.\ bounded, progressively measurable process and 
	2.~the price impact is defined by \eqref{eq:dyn_lambda} with $dP\times ds$-a.e.\ bounded, progressively measurable coefficient processes and $m=1$.
	Note that in the one-dimensional case, boundedness of  $\gamma^{-\frac12} \rho \gamma^{\frac12}$ is equivalent to boundedness of $\rho$.
	Hence, the 
	present setting generalizes the one in \cite{ackermann2022reducing} to multiple assets.
	
	(iii)
	A sufficient\footnote{Note that we do not have equivalence. For example, consider $\gamma$ and $\rho$ to be deterministic and time-independent, then clearly $\gamma^{-\frac12} \rho \gamma^{\frac12}$ 
	is $dP\times ds$-a.e.\ bounded, but $\gamma$ and $\rho$ do not need to commute.} 
	condition for 
	$\gamma^{-\frac12} \rho \gamma^{\frac12}$ 
	to be $dP\times ds$-a.e.\ bounded is, for example, that $\gamma$ and $\rho$ commute. 
	To wit, if $\gamma\rho=\rho\gamma$ $dP\times ds$-a.e., 
	then it also holds that
	$\gamma^{\frac12} \rho = \rho\gamma^{\frac12}$ $dP\times ds$-a.e.\ (see, for example, \cite[Theorem~7.2.6]{HornJohnson2013MatrixAnalysis}), and hence 
	$\gamma^{-\frac12} \rho \gamma^{\frac12} = \rho$ $dP\times ds$-a.e. 
	Since $\rho$ is assumed to be $dP\times ds$-a.e.\ bounded, we thus obtain in this situation that $\gamma^{-\frac12} \rho \gamma^{\frac12}$ 
	is $dP\times ds$-a.e.\ bounded. 
\end{remark}

Moreover, we want to include a risk term into the cost functional. 
To this end, let $\risk=(\risk(s))_{s\in[0,T]}$ be an
$\cS^n$-valued progressively measurable process such that
the $\cS^n$-valued progressively measurable process $\mathscr{Q}=(\mathscr{Q}(s))_{s\in[0,T]}$ defined by  
\begin{equation}\label{eq:def_Q}
	\mathscr{Q}(s) 
	= \gamma^{-\frac12}(s) \risk(s) \gamma^{-\frac12}(s), \quad s\in[0,T],
\end{equation}
is $dP\times ds$-a.e.\ bounded.

To conclude this subsection, we introduce auxiliary quantities essential for the subsequent analysis. 
For every $k\in\{1,\ldots,m\}$ let 
the $\cS^n$-valued progressively measurable process  $\mathscr{C}^k=(\mathscr{C}^k(s))_{s\in[0,T]}$ be defined 
by 
\begin{equation}\label{eq:def_Ck}
	\mathscr{C}^k(s)
	= \frac12 \OO^\top \diagsigma_{k}(s) \OO, 
	\quad s\in[0,T].
\end{equation} 
Furthermore, let 
the $\cS^n$-valued progressively measurable process  $\mathscr{A}=(\mathscr{A}(s))_{s\in[0,T]}$ and 
the progressively measurable process  $\mathscr{B}=(\mathscr{B}(s))_{s\in[0,T]}$ be defined for all $s \in [0,T]$ by 
\begin{equation}\label{eq:def_A_B}
	\begin{split}
		& \mathscr{A}(s)
		= \frac12 \OO^\top \diagmu(s) \OO  
		- \frac12 \sum_{k=1}^m \mathscr{C}^k(s)\mathscr{C}^k(s), \\
		& \mathscr{B}(s)
		= -\gamma^{-\frac12}(s) \rho(s) \gamma^{\frac12}(s) 
		- \OO^\top \diagmu(s) \OO 
		+ 2 \sum_{k=1}^m \mathscr{C}^k(s)\mathscr{C}^k(s)
		.
	\end{split}
\end{equation}
Moreover, we introduce the $\cS^n$-valued progressively measurable process $\kap=(\kap(s))_{s\in[0,T]}$ defined for all $s\in[0,T]$ by 
\begin{equation}\label{eq:def_kap}
	\kap(s) = 
	\frac12 \OO^\top \diagmu(s) \OO - 2 \sum_{k=1}^m \mathscr{C}^k(s)\mathscr{C}^k(s) 
	+ \frac12 \gamma^{-\frac12}(s) \rho(s) \gamma^{\frac12}(s) 
	+ \frac12  \gamma^{\frac12}(s) (\rho(s))^\top \gamma^{-\frac12}(s) 
\end{equation}
and the $\cS^n$-valued progressively measurable process $\KK=(\KK(s))_{s\in[0,T]}$ defined by 
\begin{equation}\label{eq:def_KK}
	\KK = \mathscr{Q} + \kap .
\end{equation}

\begin{remark}\label{rem:remark_on_setting_assumptions2}
	(i)
	Note that the fact that $\sigma$ is $dP\times ds$-a.e.\ bounded implies for all $k\in\{1,\ldots,m\}$ that $\mathscr{C}^k$ is $dP\times ds$-a.e.\ bounded. 
	This and the fact that $\mu$ and $\gamma^{-\frac12} \rho \gamma^{\frac12}$ are $dP\times ds$-a.e.\ bounded yield that $\kap$, $\mathscr{A}$, and $\mathscr{B}$ are $dP\times ds$-a.e.\ bounded.
	Moreover, the fact that $\kap$ and $\mathscr{Q}$ are $dP\times ds$-a.e.\ bounded ensures that $\KK$ is $dP\times ds$-a.e.\ bounded.
	
	(ii) 
    If $\rho$ and $\gamma$ commute, then also $\rho^\top$ and $\gamma$ commute, hence
    $\gamma^{-\frac12}\rho\gamma^{\frac12}=\rho$ and
    $\gamma^{\frac12}\rho^\top\gamma^{-\frac12}=\rho^\top$,
    and we thus have 
	$\mathscr{B}
	= -\rho - \OO^\top \diagmu \OO 
	+ 2 \sum_{k=1}^m \mathscr{C}^k\mathscr{C}^k$
	and $\kap 
	= \frac12 (\rho + \rho^\top)
	+ \frac12 \OO^\top \diagmu \OO - 2 \sum_{k=1}^m \mathscr{C}^k\mathscr{C}^k $.
\end{remark}

\subsection{The finite-variation stochastic control problem}\label{sec:fv_problem}

We first introduce the finite-variation strategies. 
To this end we first associate a state process to potential finite-variation strategies. 
For 
an $\R^{n}$-valued c\`adl\`ag finite-variation process $X=(X(s))_{s\in[0,T]}$ with $X(0-)=x$ we define the $\R^n$-valued c\`adl\`ag finite-variation process $D^X=(D^X(s))_{s\in[0,T]}$ by~\eqref{eq:deviationdynmultivariate}.

\begin{remark}\label{rem:soln_for_deviation_SDE}
    Suppose that $X=(X(s))_{s\in[0,T]}$ is an $\R^{n}$-valued c\`adl\`ag finite-variation process with $X(0-)=x$. 
	It then holds that there exists a unique solution 
	$D^X=(D^X(s))_{s\in[0,T]}$ of \eqref{eq:deviationdynmultivariate}, and it is given by 
	\begin{equation*}
	D^X(r) = \nu^{-1}(r) \left( d + \int_{[0,r]} \nu(s) \gamma(s) dX(s) \right),
	\quad r \in [0,T] 
	\end{equation*}
	(cf. \eqref{eq:defnu}, \eqref{eq:eqnuinv}, and, for example, \cite{jacod1982equations}, see also \cite[Theorem 1.2]{duan2008general}).
\end{remark}

The terminal target $\xi$ is an $\R^n$-valued, $\cF_T$-measurable random variable satisfying 
\begin{equation}\label{eq:int_cond_terminal_pos}
	E\big[ \lVert \gamma^{\frac12}(T) \xi \rVert_F^2 \big] < \infty .
\end{equation}
We denote by $\cA^{fv}$ the set of all adapted c\`adl\`ag finite-variation processes $X=(X(s))_{s\in[0,T]}$ that satisfy $X(0-)=x$, $X(T)=\xi$, and 
the integrability conditions 
\begin{align}
& E\bigg[ \int_0^T \big\lVert \gamma^{-\frac12}(s) D^X(s) \big\rVert_F^2 ds \bigg] < \infty,  \label{eq:A1}\\
& \sum_{k=1}^m E\bigg[ \bigg( \int_0^T \big\lvert \big( \gamma^{-\frac12}(s) D^X(s)\big)^\top \OO^\top \diagsigma_{k}(s) \OO \gamma^{-\frac12}(s) D^X(s) \big\rvert^2 ds \bigg)^{\frac12} \bigg] < \infty. \label{eq:A2}
\end{align}
Any element $X\in\cA^{fv}$ is called a \emph{finite-variation (execution) strategy}, and 
the process $D^X$ defined by \eqref{eq:deviationdynmultivariate} is called the associated \emph{deviation (process)}.

We now set up the cost functional. 
In the first step we define 
for all 
$\R^{n}$-valued c\`adl\`ag finite-variation processes $X=(X(s))_{s\in[0,T]}$ with $X(0-)=x$ and with associated process $D^X=(D^X(s))_{s\in[0,T]}$ given  by~\eqref{eq:deviationdynmultivariate} the pathwise costs by \eqref{eq:def_pathwisecosts}.
Note that for all 
$X\in\cA^{fv}$ the 
expectation $E[C(X)]$ is well defined (cf.~\cref{lem:rewritten_costs3}).

We next introduce a running position target. To this end let $\zeta=(\zeta(s))_{s\in[0,T]}$ be an $\R^n$-valued progressively measurable process such that 
\begin{equation}\label{eq:int_cond_zeta}
	\lVert \gamma^{\frac12}\zeta \rVert_{\cL^2}<\infty .
\end{equation}
We remark that 
$$E\bigg[ \Big\lvert \int_0^T (X(s)-\zeta(s))^\top  \risk(s) (X(s)-\zeta(s)) ds \Big\rvert \bigg]$$
is finite for all 
$X\in\cA^{fv}$
(cf.~\cref{lem:risk_term_finite_expectation}). 

Finally, we introduce 
for all 
$X \in \cA^{fv}$ 
the cost functional~$\fvJ$ given by \eqref{eq:cost_fct_intro}, that is, 
\begin{equation}\label{eq:defcostfctmultivariate}
	\begin{split}
		\fvJ(X) 
		& = E\bigg[ \int_{[0,T]} (D^X(s-))^\top\, dX(s) + \frac12 \int_{[0,T]} (\Delta X(s))^\top \gamma(s)\, dX(s) \bigg] \\
        & \quad + E\bigg[\int_0^T (X(s)-\zeta(s))^\top  \risk(s) (X(s)-\zeta(s)) ds \bigg] . 
	\end{split}
\end{equation}
The finite-variation stochastic control problem is to minimize $\fvJ$ over
$\cA^{fv}$. 

\begin{remark}
    Similar to \cite{ackermann2022reducing}, we can consider general initial times $t \in[0,T]$ and replace the expected values 
    by conditional expectations $E[\cdot \vert \cF_t]$. 
    All results in \cref{sec:altrepres} and \cref{sec:cont_ext_to_pm} as well as \cref{cor:linkLQpm} remain valid. 
\end{remark}

In \cref{sec:appendixontheprice} we include some comments and examples regarding our set-up. 
In particular, we show that for non-symmetric price impact the trade execution problem can become ill-posed in the sense that we find a sequence of trading strategies that pushes the execution costs to $-\infty$ (see \cref{ex:gamma_sym}). 
Furthermore, we illustrate in a simple example the role of the matrix-valued resilience process $\rho$ in our model and discuss the effects of non-zero off-diagonal entries of $\rho$ on the price deviation (see \cref{ex:resilience_effect}).

\subsection{Alternative representations for the costs and for the deviation}
\label{sec:altrepres}

In \cref{propo:rewritten_costs_and_deviation} we provide alternative pathwise representations of the costs~\eqref{eq:def_pathwisecosts} and of the deviation~\eqref{eq:deviationdynmultivariate}. 
Note that the right-hand sides of these representations make sense also for strategies that are not necessarily of finite variation. 
The proof of \cref{propo:rewritten_costs_and_deviation}, along with all other proofs, is moved to \cref{sec:proofs}.

\begin{propo}\label{propo:rewritten_costs_and_deviation}
	Let $X=(X(s))_{s\in[0,T]}$ be an $\R^n$-valued c\`adl\`ag finite-variation process with $X(0-)=x$ and with associated process $D^X=(D^X(s))_{s\in[0,T]}$ defined by~\eqref{eq:deviationdynmultivariate}. 
	Then it holds that  
	\begin{equation}\label{eq:costfunctionalpart}
		\begin{split}
			C(X) 
			& = \frac12 (D^X(T))^\top \gamma^{-1}(T) D^X(T) 
			- \frac12 d^\top \gamma^{-1}(0) d \\
			& \quad - \frac12 \int_{0}^T (D^X(s))^\top (\nu(s))^\top d\left((\nu^{-1}(s))^\top \gamma^{-1}(s) \nu^{-1}(s)\right) \nu(s) D^X(s) 
		\end{split}
	\end{equation}
	and 
	\begin{equation}\label{eq:rewritten_deviation}
		D^X(r) = \gamma(r) X(r) + \nu^{-1}(r) \left( d - \gamma(0) x - \int_0^r \bigl(d(\nu(s)\gamma(s))\bigr) X(s) \right) , \quad r\in[0,T].
	\end{equation}
\end{propo}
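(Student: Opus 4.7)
My strategy hinges on the auxiliary process $U(s) := \nu(s) D^X(s)$, which I will first show to be c\`adl\`ag of finite variation with the simple dynamics $dU(s) = \nu(s)\gamma(s)\,dX(s)$: since $\nu$ is a continuous process of finite variation (it solves $d\nu = \nu\rho\,ds$), no cross-variation appears in the product rule, and the drift of $D^X$ cancels,
\[
d(\nu D^X)(s) = \nu(s)\rho(s) D^X(s)\,ds + \nu(s)\bigl(-\rho(s) D^X(s)\,ds + \gamma(s)\,dX(s)\bigr) = \nu(s)\gamma(s)\,dX(s).
\]
To prove \eqref{eq:rewritten_deviation} I will integrate this identity from $t-$ to $r$ and use $\nu(t) = I_n$ to get $\nu(r) D^X(r) = d + \int_{[t,r]} \nu(s)\gamma(s)\,dX(s)$, then integrate by parts on the right. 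Since $\nu\gamma$ is a continuous semimartingale and $X$ is c\`adl\`ag of finite variation, $[\nu\gamma, X] = 0$, so $\int_{[t,r]} \nu(s)\gamma(s)\,dX(s) = \nu(r)\gamma(r) X(r) - \gamma(t) x - \int_t^r \bigl(d(\nu(s)\gamma(s))\bigr) X(s)$, and left-multiplying by $\nu^{-1}(r)$ produces the claimed representation.

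For \eqref{eq:costfunctionalpart} I will apply It\^o's formula to $\phi(s) := U(s)^\top M(s) U(s)$, where $M := (\nu^{-1})^\top \gamma^{-1} \nu^{-1}$ is a continuous semimartingale. The algebraic identity $\nu^\top M \nu = \gamma^{-1}$ gives $\phi(s) = (D^X(s))^\top \gamma^{-1}(s) D^X(s)$, so the boundary values match: $\phi(T) = (D^X(T))^\top \gamma^{-1}(T) D^X(T)$ and $\phi(t-) = d^\top \gamma^{-1}(t) d$. Since $U$ has finite variation and $M$ is continuous, $[U,M] = 0$ and the continuous part of $[U,U]$ is zero, so the only quadratic contribution is the pure-jump term $\sum (\Delta U)^\top M \Delta U$; continuity of $M$ also lets me replace $U(s-)$ by $U(s)$ inside the $dM$-integral. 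The multivariate It\^o expansion therefore reads
\[
\phi(T) - d^\top \gamma^{-1}(t) d = 2\int_{[t,T]} U(s-)^\top M(s)\,dU(s) + \int_{[t,T]} U(s)^\top\,dM(s)\,U(s) + \sum_{t \le s \le T} (\Delta U(s))^\top M(s)\,\Delta U(s).
\]

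Using $\nu^\top M \nu\gamma = I_n$ I will simplify $U(s-)^\top M(s)\nu(s)\gamma(s) = D^X(s-)^\top$, so that the first integral collapses to $2\int_{[t,T]} D^X(s-)^\top\,dX(s)$. Using $\Delta U = \nu\gamma\,\Delta X$ together with $(\nu\gamma)^\top M(\nu\gamma) = \gamma$ (a consequence of symmetry of $\gamma$), I will simplify the jump sum to $\int_{[t,T]} (\Delta X(s))^\top \gamma(s)\,dX(s)$, since $\Delta X$ is supported on the countable jump set of $X$ while $\gamma$ is continuous. Solving the resulting identity for $C_t(x,d,X) = \int_{[t,T]} (D^X(s-))^\top\,dX(s) + \tfrac12 \int_{[t,T]} (\Delta X(s))^\top \gamma(s)\,dX(s)$ then produces \eqref{eq:costfunctionalpart}.

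The main obstacle will be the careful bookkeeping around jumps and the verification of the covariation identities: specifically, that $[\nu\gamma, X] = 0$ and $[U, M] = 0$ hold because in each case one factor is continuous and the other is of finite variation, and that the $dM$-integral is insensitive to the replacement of left-limits by right-continuous values since $M$ is continuous. Once these steps are in place, the remainder reduces to algebraic manipulation of the identity $\nu^\top M \nu = \gamma^{-1}$; the argument is entirely pathwise, so no integrability hypothesis from $\cA_t^{fv}(x,d)$ enters the computation.
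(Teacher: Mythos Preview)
Your proof is correct and follows essentially the same approach as the paper: both arguments hinge on the auxiliary process $U=\nu D^X$ (which the paper calls $\widetilde D$) and the matrix $M=(\nu^{-1})^\top\gamma^{-1}\nu^{-1}$ (the paper's $\widetilde\nu$), exploit the same covariation cancellations ($U$ of finite variation, $M$ continuous), and reduce everything to the algebraic identity $\nu^\top M\nu=\gamma^{-1}$. The only organizational difference is that you apply the It\^o product formula to the quadratic form $U^\top M U$ in one stroke and then read off the cost terms, whereas the paper starts from the cost integrand, rewrites it as $(2\widetilde D_-^\top+\Delta\widetilde D^\top)\widetilde\nu\,d\widetilde D$, and reconstructs $d(\widetilde D^\top\widetilde\nu\,\widetilde D)$ via repeated integration by parts; these are two directions of the same computation.
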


Next we use the representation~\eqref{eq:costfunctionalpart} to establish 
the expression~\eqref{eq:fvJ_rewritten} for  
$E[C(X)]$. 
Recall that~$\kap$ is defined in~\eqref{eq:def_kap}.

\begin{propo}\label{lem:rewritten_costs3}
	Let $X\in\cA^{fv}$ with associated deviation process $D^X$ defined by~\eqref{eq:deviationdynmultivariate}. 
	Then $E[C(X)]$ is well defined and admits the representation 
	\begin{equation}\label{eq:fvJ_rewritten}
		\begin{split}
			E[C(X)]
			& = \frac12 E\big[ (D^X(T))^\top \gamma^{-1}(T) D^X(T) \big] 
			- \frac12 d^\top \gamma^{-1}(0) d 
			\\
			& \quad + E\bigg[ \int_0^T (D^X(s))^\top \gamma^{-\frac12}(s) 
			\kap(s) 
			\gamma^{-\frac12}(s) D^X(s)  ds \bigg] 
			.
		\end{split}
	\end{equation} 
\end{propo}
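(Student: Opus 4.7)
The plan is to take $\cF_t$-conditional expectation in the pathwise identity \eqref{eq:costfunctionalpart} from \cref{propo:rewritten_costs_and_deviation} and to identify the integrand via the matrix Itô rule. Since $\nu^{-1}$ is continuous of finite variation with $d\nu^{-1}=-\rho\nu^{-1}\,ds$, the product rule produces no covariation with $\gamma^{-1}$ and gives
\begin{equation*}
d\bigl((\nu^{-1})^\top \gamma^{-1}\nu^{-1}\bigr) = -(\nu^{-1})^\top(\rho^\top \gamma^{-1}+\gamma^{-1}\rho)\nu^{-1}\,ds + (\nu^{-1})^\top d\gamma^{-1}\,\nu^{-1}.
\end{equation*}
Inserting this into \eqref{eq:costfunctionalpart} the outer $\nu\nu^{-1}$-factors cancel, reducing the integral term there to $\tfrac12\int_t^T(D^X)^\top(\rho^\top\gamma^{-1}+\gamma^{-1}\rho)D^X\,ds-\tfrac12\int_t^T(D^X)^\top d\gamma^{-1}\,D^X$.

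For the second piece I would expand $d\gamma^{-1}=\OO^\top d\lam^{-1}\OO$ using Itô on each scalar SDE \eqref{eq:dyn_lambda}, which produces $d(\lambda_j^{-1})=\lambda_j^{-1}\bigl(-\mu_j+\sum_k\sigma_{j,k}^2\bigr)ds-\lambda_j^{-1}\sum_k\sigma_{j,k}\,dW_k$. Writing $\tilde D=\OO D^X$ and noting that $\OO\gamma^{-\frac12}D^X=\lam^{-\frac12}\tilde D$ since $\OO$ is orthogonal, the resulting $dW_k$-integrand equals $\tfrac12(\gamma^{-\frac12}D^X)^\top\OO^\top\diagsigma_k\OO\gamma^{-\frac12}D^X$, which is exactly the scalar controlled by \eqref{eq:A2}. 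Burkholder--Davis--Gundy therefore makes the corresponding stochastic integral a uniformly integrable martingale with vanishing $E_t[\cdot]$. The remaining drift from $d\gamma^{-1}$ becomes $\tfrac12(D^X)^\top\gamma^{-\frac12}\OO^\top(\diagmu-\sum_k\diagsigma_k^2)\OO\gamma^{-\frac12}D^X$ (using commutativity of diagonal matrices); by the identity $\sum_k\mathscr{C}^k\mathscr{C}^k=\tfrac14\OO^\top\sum_k\diagsigma_k^2\OO$, which follows from \eqref{eq:def_Ck}, this equals $(D^X)^\top\gamma^{-\frac12}\bigl(\tfrac12\OO^\top\diagmu\OO-2\sum_k\mathscr{C}^k\mathscr{C}^k\bigr)\gamma^{-\frac12}D^X$. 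Factoring the $\rho$-piece symmetrically as $\rho^\top\gamma^{-1}+\gamma^{-1}\rho=\gamma^{-\frac12}(\gamma^{\frac12}\rho^\top\gamma^{-\frac12}+\gamma^{-\frac12}\rho\gamma^{\frac12})\gamma^{-\frac12}$ supplies the two remaining summands in \eqref{eq:def_kap}, so that summing the contributions recovers exactly $(D^X)^\top\gamma^{-\frac12}\kap\gamma^{-\frac12}D^X$.

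For well-definedness, each drift integrand above is bounded in absolute value by $C\lVert\gamma^{-\frac12}D^X\rVert_F^2$ thanks to the boundedness of $\mu$, $\sigma$, $\gamma^{-\frac12}\rho\gamma^{\frac12}$, and $\kap$ (cf.\ \cref{rem:remark_on_setting_assumptions2}), and so lies in $L^1(dP\otimes ds)$ by \eqref{eq:A1}; the terminal term $(D^X(T))^\top\gamma^{-1}(T)D^X(T)=\lVert\gamma^{-\frac12}(T)D^X(T)\rVert_F^2$ is nonnegative, so its $E_t[\cdot]$ is defined in $[0,\infty]$, and rearranging \eqref{eq:costfunctionalpart} then shows that $E_t[C_t(x,d,X)]$ is unambiguously defined in $(-\infty,\infty]$ and that \eqref{eq:fvJ_rewritten} holds. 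The main obstacle I foresee is the algebraic bookkeeping: since $\rho$ generally does not commute with $\gamma^{\pm\frac12}$ and the orthogonal change of basis $\OO$ diagonalizes $\gamma$ but not $\rho$, the drift contributions must be rearranged carefully in the symmetrized form $\gamma^{-\frac12}\,\cdot\,\gamma^{-\frac12}$ so as to collapse precisely into the process $\kap$ of \eqref{eq:def_kap}.
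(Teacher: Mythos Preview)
Your proposal is correct and follows essentially the same route as the paper: both start from the pathwise identity \eqref{eq:costfunctionalpart}, compute $d\bigl((\nu^{-1})^\top\gamma^{-1}\nu^{-1}\bigr)$ via the product rule to eliminate $\nu$, expand $d\gamma^{-1}=\OO^\top d\lam^{-1}\OO$ by It\^o on the eigenvalue SDEs, kill the resulting $dW_k$-integral via BDG and \eqref{eq:A2}, and assemble the drift into $\kap$ using boundedness and \eqref{eq:A1}. The only difference is organizational: the paper isolates the removal of $\nu$ and the dynamics of $\lam^{-1}$ into separate auxiliary lemmas, whereas you carry out these computations inline.
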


\section{Continuous extension to progressively measurable strategies}
\label{sec:cont_ext_to_pm}

The aim in this section is to extend the problem formulation of \cref{sec:fv_problem} to a broader class of admissible strategies. 
The alternative representations of the execution costs and of the deviation process 
that are derived in \cref{sec:altrepres} 
lead to 
our problem formulation for progressively measurable strategies in \cref{sec:pm_problem}.  
We show in the main result \cref{thm:contextcostfct} in \cref{sec:continuousextension} that 
the control problem in \cref{sec:pm_problem} can be interpreted as the continuous extension to progressively measurable strategies of the control problem for finite-variation strategies stated in \cref{sec:fv_problem}.
As helpful tools we introduce the stochastic differential equation (SDE)~\eqref{eq:def_scH_u} 
and the bijection~\eqref{eq:def_phi_b} 
(see moreover \cref{lem:bijective}) 
between our set of progressively measurable execution strategies and the set of square-integrable progressively measurable processes.

\subsection{Progressively measurable execution strategies}
\label{sec:pm_problem}

For an $\R^{n}$-valued progressively measurable process $X=(X(s))_{s\in[0,T]}$ 
which satisfies $\int_0^T \lVert X(s) \rVert_F^2 ds < \infty$ a.s.\ 
we define  
the $\R^n$-valued progressively measurable process $D^X=(D^X(s))_{s\in[0,T]}$ by
\begin{equation}\label{eq:def_deviation_pm}
D^X(r) = \gamma(r) X(r) + \nu^{-1}(r) \bigg( d - \gamma(0) x - \int_0^r \big(d(\nu(s)\gamma(s))\big) X(s) \bigg) , \quad r\in[0,T].
\end{equation} 
We denote by $\cA^{pm}$ the set of
(equivalence classes\footnote{For $X,Y\in\cA^{pm}$, we say that $X\sim Y$ if $X=Y$ $dP\times ds$-a.e.\ on $\Omega\times[0,T]$, $X(0-)=Y(0-) \; (=x)$ and $X(T)=Y(T) \; (=\xi)$.} of)
$\R^n$-valued progressively measurable processes $X=(X(s))_{s\in[0,T]}$ with $X(0-)=x$ and $X(T)=\xi$ that satisfy $\int_0^T \lVert X(s) \rVert_F^2 ds < \infty$ a.s.\ and~\eqref{eq:A1}.
Any element $X\in\cA^{pm}$ is called a \emph{progressively measurable (execution) strategy}, and 
the process $D^X$ now defined via~\eqref{eq:def_deviation_pm} is again called the associated \emph{deviation (process)} (note also \cref{cor:fvsubsetpm} and \cref{lem:deviationcompatibility} below). 

For 
$X \in \cA^{pm}$ with associated deviation $D^X$ defined by~\eqref{eq:def_deviation_pm} 
we define 
the cost functional $\pmJ$ by 
\begin{equation}\label{eq:def_costfct_pm}
	\begin{split}
		\pmJ(X)
		& = \frac12 E\big[ (D^X(T))^\top \gamma^{-1}(T) D^X(T) \big] 
        + E\bigg[ \int_0^T (X(s)-\zeta(s))^\top \risk(s) (X(s) -\zeta(s)) ds \bigg] \\
		& \quad  
		+ E\bigg[ \int_0^T (D^X(s))^\top \gamma^{-\frac12}(s)  
		\kap(s) 
		\gamma^{-\frac12}(s) D^X(s)  ds \bigg] 
		- \frac12 d^\top \gamma^{-1}(0) d .
	\end{split}
\end{equation}
Note that $\pmJ$ is well defined (cf.~\cref{lem:pm_cost_fct_as_LQ}). 
The following result is an immediate consequence of \Cref{propo:rewritten_costs_and_deviation} and \Cref{lem:rewritten_costs3}.

\begin{corollary}\label{cor:fvsubsetpm}
	Suppose that $X\in\cA^{fv}$ with associated deviation process $D^X$ defined by~\eqref{eq:deviationdynmultivariate}. 
	It then holds that $X \in \cA^{pm}$, that $D^X$ satisfies~\eqref{eq:def_deviation_pm}, and that $\fvJ(X)=\pmJ(X)$.  
\end{corollary}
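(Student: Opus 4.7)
The plan is to read the corollary as a bookkeeping statement and split it into its three claims, each of which matches one piece of the two preceding propositions. So I would simply verify the three assertions separately, in the order stated.

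For the first claim, $X \in \cA_t^{pm}(x,d)$, I would check the defining conditions of $\cA_t^{pm}(x,d)$ one by one. By hypothesis $X$ is adapted and c\`adl\`ag, hence progressively measurable; the boundary conditions $X(t-)=x$ and $X(T)=\xi$ are inherited directly from $X\in\cA_t^{fv}(x,d)$; condition \eqref{eq:A1} is part of the definition of $\cA_t^{fv}(x,d)$ and so is automatic. The only point to justify is $\int_t^T \lVert X(s)\rVert_F^2\,ds<\infty$ a.s., but this follows because c\`adl\`ag paths on the compact interval $[t,T]$ are pathwise bounded.

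For the second claim, I would observe that \eqref{eq:def_deviation_pm} is literally the right-hand side of formula \eqref{eq:rewritten_deviation} in \cref{propo:rewritten_costs_and_deviation}. Since the hypotheses of that proposition are satisfied by the given $X$, the process $D^X$ defined by \eqref{eq:deviationdynmultivariate} automatically satisfies the representation \eqref{eq:def_deviation_pm}, with no additional work required.

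For the third claim, I would compare the definitions \eqref{eq:defcostfctmultivariate} and \eqref{eq:def_costfct_pm} term by term. The running-penalty term involving $\risk$ and $\zeta$ appears identically in both functionals, so it suffices to show that $E_t[C_t(x,d,X)]$ coincides with the sum of the three remaining terms in \eqref{eq:def_costfct_pm}, namely $\tfrac12 E_t[(D^X(T))^\top \gamma^{-1}(T) D^X(T)]$, the integral involving $\gamma^{-1/2}\kap\gamma^{-1/2}$, and the constant $-\tfrac12 d^\top \gamma^{-1}(t) d$. This is exactly the content of formula \eqref{eq:fvJ_rewritten} in \cref{lem:rewritten_costs3}, which also provides the well-definedness of $E_t[C_t(x,d,X)]$.

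There is no real obstacle here; the corollary is essentially a re-packaging of the two preceding propositions together with the (trivial) verification that the defining conditions of $\cA_t^{fv}(x,d)$ imply those of $\cA_t^{pm}(x,d)$. The mildest subtlety is the pathwise $L^2$ bound on $X$, which is immediate from c\`adl\`ag-ness on a compact interval.
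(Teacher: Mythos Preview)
Your proposal is correct and matches the paper's own approach: the paper simply states that the corollary is an immediate consequence of \cref{propo:rewritten_costs_and_deviation} and \cref{lem:rewritten_costs3}, which is precisely the decomposition you give. The only cosmetic point is that, strictly speaking, the $D^X$ appearing in condition \eqref{eq:A1} for $\cA_t^{pm}(x,d)$ is the one defined via \eqref{eq:def_deviation_pm}, so your second claim (that the two definitions of $D^X$ coincide) should logically precede the transfer of \eqref{eq:A1} in your first claim; but this is a trivial reordering and does not affect the validity of your argument.
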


Moreover, we have the following:

\begin{lemma}\label{lem:deviationcompatibility}
	Suppose that $X\in\cA^{pm}$ is a c\`adl\`ag finite-variation process with associated deviation process $D^X$ defined by~\eqref{eq:def_deviation_pm}. 
    Then $D^X$ satisfies~\eqref{eq:deviationdynmultivariate}.
\end{lemma}

\subsection{Continuous extension of the cost functional}
\label{sec:continuousextension}

We consider $\cL^2$, the set of (equivalence classes\footnote{For $u,v\in\cL^2$, we say that $u\sim v$ if $u=v$ $dP\times ds$-a.e.\ on $\Omega\times[0,T]$.} of) $\R^n$-valued progressively measurable processes $u=(u(s))_{s\in[0,T]}$ that satisfy 
$ \lVert u \rVert_{\cL^2}^2 = E[\int_0^T \lVert u(s) \rVert_F^2 ds] < \infty$, and for any 
$u\in\cL^2$ 
we introduce the 
controlled 
SDE 
\begin{equation}\label{eq:def_scH_u}
\begin{split}
d\scH^u(s) & = 
\mathscr{A}(s) \scH^u(s) ds  
+ \mathscr{B}(s) u(s) ds 
+ \sum_{k=1}^m \mathscr{C}^k(s) \scH^u(s) dW_k(s) \\
& \quad 
- 2 \sum_{k=1}^m \mathscr{C}^k(s) u(s) dW_k(s)  , 
 \quad s\in[0,T],\\
\scH^u(0) & = \gamma^{-\frac12}(0) d - \gamma^{\frac12}(0) x, 
\end{split}
\end{equation}
where the coefficient processes $\mathscr{A}$, $\mathscr{B}$, and $\mathscr{C}^k$, $k\in\{1,\ldots,m\}$, are defined in~\eqref{eq:def_Ck} and~\eqref{eq:def_A_B}.

\begin{remark}\label{rem:SDE_hidden_dev_and_new_state}
	Let 
    $u\in\cL^2$.
	Then there exists a unique solution $\scH^u = (\scH^u(s))_{s\in[0,T]}$ of the SDE~\eqref{eq:def_scH_u}
	and it holds that $E[\sup_{s\in[0,T]} \lVert\scH^u(s) \rVert_F^2]<\infty$ (cf., for example, \cite[Theorem 3.2.2 \& Theorem 3.3.1]{zhang}).
\end{remark}

Due to~\eqref{eq:A1}, it 
holds for all 
$X \in \cA^{pm}$ 
that $\gamma^{-\frac12} D^X\in\cL^2$, 
and in the next result
we observe that 
the process $\scH^{\gamma^{-\frac12} D^X}$ is the multivariate version of the so-called scaled hidden deviation in~\cite{ackermann2022reducing}.

\begin{lemma}\label{lem:scH_hidden_dev}
	Let 
    $X \in \cA^{pm}$. 
	It then holds that 
	$\scH^{\gamma^{-\frac12}D^X} = \gamma^{-\frac12} D^X - \gamma^{\frac12} X.$
\end{lemma}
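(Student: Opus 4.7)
\medskip

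\textbf{Proof plan.} The key observation is that $\gamma^{-\frac12} D^X - \gamma^{\frac12} X$ admits a clean pathwise description that does not involve $dX$. Indeed, introducing
\[
P(r) := \nu^{-1}(r)\!\left(d - \gamma(t)x - \int_t^r d\bigl(\nu(s)\gamma(s)\bigr) X(s)\right), \quad r\in[t,T],
\]
equation~\eqref{eq:def_deviation_pm} reads $D^X = \gamma X + P$, hence $\gamma^{-\frac12}D^X - \gamma^{\frac12}X = \gamma^{-\frac12} P$. At $r=t$ this already gives $\gamma^{-\frac12}(t) P(t) = \gamma^{-\frac12}(t)d - \gamma^{\frac12}(t)x$, which matches the initial condition of $\scH^{\gamma^{-\frac12}D^X}$ in~\eqref{eq:def_scH_u}. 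By the uniqueness statement in \cref{rem:SDE_hidden_dev_and_new_state}, it then suffices to verify that $\gamma^{-\frac12}P$ satisfies the SDE~\eqref{eq:def_scH_u} driven by $u := \gamma^{-\frac12}D^X$.

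The first step is to compute $dP$. Since $\nu^{-1}$ is continuous of finite variation (by~\eqref{eq:eqnuinv}), its covariations with any semimartingale vanish. Combining $d(\nu\gamma) = \nu\, d\gamma + \nu\rho\gamma\, ds$ with the product rule yields
\[
dP = -\rho D^X\, ds - (d\gamma)\, X .
\]
From~\eqref{eq:dyn_lambda}, \eqref{eq:def_gamma} and $O^\top O = I_n$, setting $M:=O^\top \boldsymbol{\mu} O$ and using $O^\top \boldsymbol{\sigma}_k O = 2\mathscr{C}^k$, Itô applied to $\lam^{\alpha}$ (for $\alpha\in\{\tfrac12,-\tfrac12\}$) and conjugation by $O$ give
\[
d\gamma = \gamma M\, ds + 2\sum_k \gamma\mathscr{C}^k\, dW_k, \qquad
d\gamma^{-\frac12} = \gamma^{-\frac12}\!\Bigl[-\tfrac12 M + \tfrac32\sum_k \mathscr{C}^k\mathscr{C}^k\Bigr]ds - \sum_k \gamma^{-\frac12}\mathscr{C}^k\, dW_k.
\]

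The main step is then to apply the Itô product rule to $\gamma^{-\frac12}P$. The crucial computation is the quadratic covariation $d[\gamma^{-\frac12},P]$, whose only martingale contributions come from $-\gamma^{-\frac12}\mathscr{C}^k\, dW_k$ (for $\gamma^{-\frac12}$) and $-2\gamma\mathscr{C}^k X\, dW_k$ (for $P$, read off from the expression above). Since $\gamma^{\pm\frac12}$, $M$, and all $\mathscr{C}^k$ are simultaneously diagonalized by $O$ and therefore mutually commute, one obtains $d[\gamma^{-\frac12},P] = 2\sum_k \gamma^{\frac12}\mathscr{C}^k\mathscr{C}^k X\, ds$. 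Assembling the three contributions and re-expressing everything in terms of $u = \gamma^{-\frac12}D^X$ and $\scH := \gamma^{-\frac12}P = u - \gamma^{\frac12}X$ (so that $X = \gamma^{-\frac12}(u-\scH)$ and $P = \gamma^{\frac12}\scH$), the $ds$-coefficient collapses to
\[
\Bigl(\tfrac12 M - \tfrac12\sum_k\mathscr{C}^k\mathscr{C}^k\Bigr)\scH + \Bigl(-\gamma^{-\frac12}\rho\gamma^{\frac12} - M + 2\sum_k\mathscr{C}^k\mathscr{C}^k\Bigr) u = \mathscr{A}\scH + \mathscr{B} u,
\]
while the $dW_k$-coefficient collapses to $\mathscr{C}^k \scH - 2\mathscr{C}^k u$; this exactly matches~\eqref{eq:def_scH_u}, and uniqueness of the SDE concludes the proof.

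The main obstacle is bookkeeping: one must correctly identify the martingale parts of $\gamma^{-\frac12}$ and of $P$, compute the matrix-vector covariation term $d[\gamma^{-\frac12},P]$, and then repeatedly use the fact that $\gamma^\alpha$, $M$, and the $\mathscr{C}^k$ pairwise commute (being diagonal in the common basis $O$) in order to simplify expressions such as $\gamma^{-\frac12}\mathscr{C}^k\gamma = \gamma^{\frac12}\mathscr{C}^k$. Once this algebra is carried out, the cancellations matching $\mathscr{A}$ and $\mathscr{B}$ are automatic.
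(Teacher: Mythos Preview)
Your proposal is correct and follows essentially the same approach as the paper: you introduce the process $P=\nu^{-1}\beta$ (the paper's $\nu^{-1}\beta$), compute its It\^o dynamics, apply the product rule to $\gamma^{-\frac12}P$, and then invoke uniqueness for the SDE~\eqref{eq:def_scH_u}. The paper organizes the same computation through two preparatory lemmas (\cref{lem:computations_for_SDE_general} and \cref{lem:computations_for_SDE}) and then argues that the \emph{difference} $\scH^{\gamma^{-\frac12}D^X}-\gamma^{-\frac12}\nu^{-1}\beta$ satisfies a linear homogeneous SDE with zero initial data; your direct verification that $\gamma^{-\frac12}P$ solves~\eqref{eq:def_scH_u} is an equivalent and slightly more streamlined packaging of the same calculation.
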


In the following result we provide a reformulation of the cost functional for progressively measurable strategies \eqref{eq:def_costfct_pm} 
in terms of the (scaled) deviation process and the corresponding solution of the SDE~\eqref{eq:def_scH_u} (the multivariate version of the scaled hidden deviation process). 
This representation is helpful in order to establish the continuous extension result \cref{thm:contextcostfct} and it appears again in \cref{sec:equivLQ} in the standard linear-quadratic stochastic control problem.  
Recall that $\mathscr{Q}$ and~$\KK$ are defined in~\eqref{eq:def_Q} and~\eqref{eq:def_KK}. 

\begin{propo}\label{lem:pm_cost_fct_as_LQ}
	Let 
    $X \in \cA^{pm}$.  
	Then 
    $\pmJ(X)$ is well defined and finite 
	and it holds 
	\begin{align}
    \nonumber
	\pmJ(X)	
	& = \frac12 E\big[ \big(\scH^{\gamma^{-\frac12} D^X}(T) + \gamma^{\frac12}(T) \xi \big)^\top \big(\scH^{\gamma^{-\frac12} D^X}(T) + \gamma^{\frac12}(T) \xi \big) \big] 
	- \frac12 d^\top \gamma^{-1}(0) d \\ \nonumber
	& \quad + 
	E\bigg[ \int_0^T (\gamma^{-\frac12}(s) D^X(s))^\top 
	\KK(s) 
	\gamma^{-\frac12}(s) D^X(s)  ds \bigg] 
	 \\ \nonumber
	& \quad  
	+ E\bigg[ \int_0^T \big(\scH^{\gamma^{-\frac12} D^X}(s) + \gamma^{\frac12}(s) \zeta(s) \big)^\top \mathscr{Q}(s) \big(\scH^{\gamma^{-\frac12} D^X}(s) + \gamma^{\frac12}(s) \zeta(s) \big) ds \bigg] \\ 
	& \quad - 2 E\bigg[ \int_0^T (\gamma^{-\frac12}(s) D^X(s))^\top \mathscr{Q}(s) \big(\scH^{\gamma^{-\frac12} D^X}(s) + \gamma^{\frac12}(s) \zeta(s) \big) ds \bigg] 
	. \label{eq:cost_fct_shd}
	\end{align} 
\end{propo}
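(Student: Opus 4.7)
The plan is to reduce the claim to a purely algebraic identity between two quadratic forms using \cref{lem:scH_hidden_dev}, and then to verify $L^1$-integrability of each term by combining the defining admissibility condition \eqref{eq:A1} with the a priori estimate from \cref{rem:SDE_hidden_dev_and_new_state} and the boundedness assumptions on the coefficients.

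For brevity I would set $Z = \gamma^{-\frac12} D^X$ and $Y = \scH^{\gamma^{-\frac12} D^X}$. By \cref{lem:scH_hidden_dev}, $Y = Z - \gamma^{\frac12} X$, hence $\gamma^{\frac12} X = Z - Y$. Using the symmetry of $\gamma^{\frac12}$ and the definition $\mathscr{Q} = \gamma^{-\frac12} \risk \gamma^{-\frac12}$ in \eqref{eq:def_Q}, I rewrite the running risk integrand pathwise as
\[
(X-\zeta)^\top \risk (X-\zeta) = \bigl(\gamma^{\frac12}(X-\zeta)\bigr)^\top \mathscr{Q} \bigl(\gamma^{\frac12}(X-\zeta)\bigr) = \bigl(Z - Y - \gamma^{\frac12}\zeta\bigr)^\top \mathscr{Q} \bigl(Z - Y - \gamma^{\frac12}\zeta\bigr).
\]
Expanding the quadratic form (and using symmetry of $\mathscr{Q}$) produces three groups: $Z^\top \mathscr{Q} Z$, $-2\, Z^\top \mathscr{Q}(Y+\gamma^{\frac12}\zeta)$, and $(Y+\gamma^{\frac12}\zeta)^\top \mathscr{Q}(Y+\gamma^{\frac12}\zeta)$. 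Adding the $\kap$-integral from \eqref{eq:def_costfct_pm}, which equals $\int_t^T Z^\top \kap Z\, ds$, and invoking the definition $\KK = \mathscr{Q} + \kap$ from \eqref{eq:def_KK} combines the first group with the $\kap$-term into $Z^\top \KK Z$, matching exactly the middle three integrals on the right of \eqref{eq:cost_fct_shd}.

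For the terminal contribution, I use that any $X\in\cA^{pm}_t(x,d)$ satisfies $X(T)=\xi$, so $Z(T)=Y(T)+\gamma^{\frac12}(T)\xi$, which immediately turns $\frac12 E_t[(D^X(T))^\top \gamma^{-1}(T) D^X(T)] = \frac12 E_t[Z(T)^\top Z(T)]$ into the desired terminal term. The constant $-\frac12 d^\top \gamma^{-1}(t) d$ is $\cF_t$-measurable and carries over unchanged. This completes the algebraic identification.

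Finally, for integrability (which also justifies pulling the sum of conditional expectations through the identity), I check each term separately. The admissibility condition \eqref{eq:A1} gives $Z\in\cL_t^2$; \cref{rem:SDE_hidden_dev_and_new_state} yields $E[\sup_{s\in[t,T]}\|Y(s)\|_F^2]<\infty$; assumption \eqref{eq:int_cond_zeta} yields $\gamma^{\frac12}\zeta\in\cL_t^2$; and \eqref{eq:int_cond_terminal_pos} gives $\gamma^{\frac12}(T)\xi\in L^2$. Combined with the $dP\times ds$-a.e.\ boundedness of $\mathscr{Q}$, $\kap$, and $\KK$ noted in \cref{rem:remark_on_setting_assumptions2}, each of the four quadratic/bilinear expressions in \eqref{eq:cost_fct_shd} is in $L^1(\Omega,\cF_T,P;\R)$ by Cauchy--Schwarz, so the conditional expectations exist and define $\cF_t$-measurable $L^1$ random variables. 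The main (and only mildly delicate) obstacle is book-keeping: ensuring that the cross-term $-2 Z^\top \mathscr{Q}(Y+\gamma^{\frac12}\zeta)$ produced by expanding the quadratic is reported with the correct sign and that the terminal identity uses $X(T)=\xi$ rather than a generic boundary value — both are straightforward once $Z, Y$ are introduced.
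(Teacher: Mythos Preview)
Your proof is correct and follows essentially the same route as the paper: use \cref{lem:scH_hidden_dev} to write $\gamma^{\frac12}X = \gamma^{-\frac12}D^X - \scH^{\gamma^{-\frac12}D^X}$, expand the risk integrand via $\mathscr{Q}$, absorb the resulting $\mathscr{Q}$-quadratic in $Z$ into $\KK=\mathscr{Q}+\kap$, and rewrite the terminal term through $X(T)=\xi$, with integrability handled by \eqref{eq:A1}, \cref{rem:SDE_hidden_dev_and_new_state}, \eqref{eq:int_cond_zeta}, \eqref{eq:int_cond_terminal_pos}, and boundedness of the coefficients. The only detail you gloss over is that $d^\top\gamma^{-1}(t)d\in L^1$ (needed for the $L^1(\Omega,\cF_t,P;\R)$ claim), which the paper supplies via \cref{lem:gamma_integrability}.
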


Next we introduce 
a bijection 
\begin{equation}\label{eq:def_phi_b}
	\varphi\colon \cA^{pm} \to \cL^2, \,
	\varphi(X)=\gamma^{-\frac12}D^X
\end{equation}
between the set of progressively measurable execution strategies~$\cA^{pm}$ and the set of square-integrable controls~$\cL^2$.

\begin{propo}\label{lem:bijective}
	It holds that $\varphi\colon \cA^{pm}\to \cL^2$ defined by~\eqref{eq:def_phi_b} 
	is bijective with inverse
	$\ol{\varphi} \colon \cL^2 \to \cA^{pm}$ 
	defined by 
	\begin{equation}\label{eq:1607}
		\begin{split}
			& (\ol{\varphi}(u))(s) = \gamma^{-\frac12}(s) (u(s)-\scH^u(s)), 
			\quad s\in [0,T),  \\
			& (\ol{\varphi}(u))(0-) = x, \quad (\ol{\varphi}(u))(T) = \xi,
			\quad u \in \cL^2 .
		\end{split}
	\end{equation}
	Moreover, it holds for all $u\in\cL^2$ and $s \in [0,T)$ that 
	$D^{\ol{\varphi}(u)}(s) = \gamma^{\frac12}(s) u(s)$. 
\end{propo}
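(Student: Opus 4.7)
The plan is to prove bijectivity by exhibiting $\ol\varphi$ as a two-sided inverse of $\varphi$, which will simultaneously yield the final assertion $D^{\ol\varphi(u)} = \gamma^{\frac12} u$. The two key ingredients are \cref{lem:scH_hidden_dev} (which identifies $\gamma^{-\frac12} D^X - \gamma^{\frac12} X$ as the solution of the controlled SDE~\eqref{eq:def_scH_u}) and uniqueness of linear SDEs with bounded coefficients. Well-definedness of $\varphi\colon \cA_t^{pm}(x,d)\to\cL_t^2$ is immediate from~\eqref{eq:A1}.

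For $\ol\varphi \circ \varphi = \mathrm{id}$, take $X \in \cA_t^{pm}(x,d)$ and set $u = \varphi(X) = \gamma^{-\frac12} D^X \in \cL_t^2$. Then \cref{lem:scH_hidden_dev} gives $\scH^u = u - \gamma^{\frac12} X$, so $X(s) = \gamma^{-\frac12}(s)(u(s) - \scH^u(s))$ on $[t,T)$; together with the prescribed boundary values $X(t-)=x$ and $X(T)=\xi$ this is exactly $\ol\varphi(u)$. In particular, $\varphi$ is injective.

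For the reverse direction, fix $u \in \cL_t^2$ and set $X := \ol\varphi(u)$. Progressive measurability of $X$ on $[t,T)$ is inherited from $u$ and $\scH^u$, while a.s.\ finiteness of $\int_t^T \lVert X(s) \rVert_F^2 ds$ follows from pathwise continuity (hence a.s.\ boundedness on $[t,T]$) of $\gamma^{-\frac12}$ together with $\int_t^T \lVert u \rVert_F^2 ds < \infty$ a.s.\ and the estimate $E[\sup_s \lVert \scH^u \rVert_F^2] < \infty$ from \cref{rem:SDE_hidden_dev_and_new_state}. It remains to show that the deviation $D^X$ defined via~\eqref{eq:def_deviation_pm} equals $\gamma^{\frac12} u$; this simultaneously delivers~\eqref{eq:A1} (whence $X \in \cA_t^{pm}(x,d)$) and $\varphi(X) = u$. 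Setting $v := \gamma^{-\frac12} D^X$, a further application of \cref{lem:scH_hidden_dev} to $X$ identifies $\scH^v = v - \gamma^{\frac12} X = v - (u - \scH^u)$, so $v - u = \scH^v - \scH^u$. Evaluating~\eqref{eq:def_deviation_pm} at $r = t$ with $\scH^u(t) = \gamma^{-\frac12}(t) d - \gamma^{\frac12}(t) x$ yields $D^X(t) = \gamma^{\frac12}(t) u(t)$, hence $(v-u)(t) = 0$. By linearity of the SDE~\eqref{eq:def_scH_u} in both control and initial condition, $w := v - u$ is a semimartingale solving the homogeneous linear SDE
\[
dw(s) = (\mathscr{A}(s) + \mathscr{B}(s)) w(s)\, ds - \sum_{k=1}^m \mathscr{C}^k(s) w(s)\, dW_k(s), \qquad w(t) = 0,
\]
with bounded coefficients (cf.\ \cref{rem:remark_on_setting_assumptions2}(i)); pathwise uniqueness then forces $w \equiv 0$, and therefore $D^X = \gamma^{\frac12} u$.

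The principal technical obstacle is the application of \cref{lem:scH_hidden_dev} to $X = \ol\varphi(u)$ in the reverse step: at that point I have only verified $\int_t^T \lVert X \rVert_F^2 ds < \infty$ a.s., not yet the full integrability~\eqref{eq:A1} required to place $X \in \cA_t^{pm}(x,d)$. I expect the derivation of \cref{lem:scH_hidden_dev} to rely only on this weaker a.s.\ condition, so that the argument above applies verbatim; if instead it genuinely needs~\eqref{eq:A1}, the SDE-uniqueness step must be replaced by a direct It\^o verification that $Y(r) := \nu(r) \gamma^{\frac12}(r) \scH^u(r)$ satisfies $Y(t) = d - \gamma(t) x$ and $dY(r) = -d(\nu(r)\gamma(r))\, X(r)$, using the explicit dynamics of $\gamma^{\pm\frac12}$ deduced from~\eqref{eq:dyn_lambda}--\eqref{eq:def_gamma_alpha} and matching the resulting $ds$- and $dW_k$-terms against $\mathscr{A}$, $\mathscr{B}$, and the $\mathscr{C}^k$.
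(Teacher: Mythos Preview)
Your forward direction ($\ol\varphi\circ\varphi=\mathrm{id}$) is correct and is exactly the paper's Part~2. The circularity you flag in the reverse direction is real: \cref{lem:scH_hidden_dev} is stated for $X\in\cA_t^{pm}(x,d)$, and its proof requires $\scH^{\gamma^{-\frac12}D^X}$ to be well defined via \cref{rem:SDE_hidden_dev_and_new_state}, i.e.\ precisely $\gamma^{-\frac12}D^X\in\cL_t^2$, which is what you want to prove. So invoking $\scH^v$ for $v=\gamma^{-\frac12}D^X$ is not yet licensed; also, your verification of $w(t)=0$ via ``$D^X(t)=\gamma^{\frac12}(t)u(t)$'' leans on the point value $u(t)$ of an $\cL_t^2$-equivalence class, which is not well defined.

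The paper's resolution is precisely your fix (b), but arranged so that one never needs $\scH^v$. Writing $\beta(s)=d-\gamma(t)x-\int_t^s(d(\nu\gamma))X$, it uses \cref{lem:computations_for_SDE} (whose hypothesis is only $\int_t^T\lVert X\rVert_F^2\,ds<\infty$ a.s.) to compute the dynamics of $\gamma^{-\frac12}\nu^{-1}\beta$, then substitutes $\gamma^{\frac12}X=u-\scH^u$ and compares directly with \eqref{eq:def_scH_u} for $\scH^u$ (which \emph{is} well defined since $u\in\cL_t^2$). One finds that $\scH^u-\gamma^{-\frac12}\nu^{-1}\beta$ solves a homogeneous linear SDE with bounded coefficients starting at~$0$, hence vanishes; therefore $\gamma^{-\frac12}D^X=\gamma^{\frac12}X+\gamma^{-\frac12}\nu^{-1}\beta=(u-\scH^u)+\scH^u=u$ on $[t,T)$, which simultaneously delivers \eqref{eq:A1} and $\varphi(\ol\varphi(u))=u$. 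In effect the paper's difference process is $\gamma^{-\frac12}\nu^{-1}\beta-\scH^u$, which is exactly your $w=v-u$ once you unwind the definitions; the point is to obtain its dynamics from \cref{lem:computations_for_SDE} rather than from an (as yet undefined) $\scH^v$. Your fix (a) can also be made to work by localizing the SDE~\eqref{eq:def_scH_u} to controls that are only a.s.\ square-integrable, but the paper's route is cleaner.
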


We equip the set of progressively measurable strategies with a metric. 
For 
$X,\widetilde{X} \in \cA^{pm}$ 
with associated deviation processes $D^X, D^{\widetilde{X}}$ we define  
\begin{equation}\label{eq:def_metric}
	\md(X,\widetilde{X}) = \bigg( E\bigg[ \int_0^T \big(D^X(s)-D^{\widetilde{X}}(s)\big)^\top \gamma^{-1}(s) \big(D^X(s)-D^{\widetilde{X}}(s)\big) ds \bigg] \bigg)^{\frac12} .
\end{equation}

\begin{remark}\label{rem:metric_and_cLt2}
	Observe that $(\cL^2,\md_{\cL^2})$ with $\md_{\cL^2}(u,v) = \lVert u-v \rVert_{\cL^2}$, 
	$u,v \in \cL^2$, is a complete metric space. 
	Therefore, \Cref{lem:bijective} and 
	the fact that for all $X,\widetilde{X} \in \cA^{pm}$ it holds that
	$\md(X,\widetilde{X}) = \md_{\cL^2}(\gamma^{-\frac12}D^X,\gamma^{-\frac12}D^{\widetilde{X}})$ 
	prove that $(\cA^{pm},\md)$ is a complete metric space.
\end{remark}

The following result ensures that the convergence of progressively measurable strategies in the metric $\md$ implies a suitable convergence for the corresponding solutions of the SDE~\eqref{eq:def_scH_u}. 

\begin{lemma}\label{lem:convergence_SDE}
	Suppose that $X \in \cA^{pm}$ and that $(X^N)_{N\in\N}$ is a sequence in $\cA^{pm}$ such that $\lim_{N\to\infty}\md(X,X^N)=0$. 
	Denote the solution of~\eqref{eq:def_scH_u} associated to $\gamma^{-\frac12} D^{X} \in \cL^2$ by $\scH$. 
	For $N\in\N$ denote the solution of~\eqref{eq:def_scH_u} associated to $\gamma^{-\frac12} D^{X^N} \in \cL^2$ by $\scH^N$. 
	Then it holds that 
    $\lim_{N\to\infty} E[\sup_{s\in[0,T]} \lVert \scH(s) - \scH^N(s) \rVert_F^2 ] = 0$.
\end{lemma}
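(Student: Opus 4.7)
The natural strategy is to reduce the statement to a standard stability estimate for linear SDEs with bounded coefficients. First I would denote $u = \gamma^{-\frac12} D^X \in \cL_t^2$ and $u^N = \gamma^{-\frac12} D^{X^N} \in \cL_t^2$, and observe via Remark~\ref{rem:metric_and_cLt2} that the hypothesis $\md(X,X^N)\to 0$ is exactly the statement $\lVert u - u^N\rVert_{\cL_t^2}\to 0$. Since $\scH$ and $\scH^N$ solve~\eqref{eq:def_scH_u} with the same (deterministic) initial value $\gamma^{-\frac12}(t)d - \gamma^{\frac12}(t)x$, the difference $\Delta\scH := \scH - \scH^N$ solves the linear SDE
\begin{equation*}
\begin{split}
d\Delta\scH(s) & = \mathscr{A}(s)\Delta\scH(s)\,ds + \mathscr{B}(s)\bigl(u(s)-u^N(s)\bigr)\,ds \\
& \quad + \sum_{k=1}^m \mathscr{C}^k(s)\Delta\scH(s)\,dW_k(s) - 2\sum_{k=1}^m \mathscr{C}^k(s)\bigl(u(s)-u^N(s)\bigr)\,dW_k(s),
\end{split}
\end{equation*}
with $\Delta\scH(t)=0$.

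Next I would apply the standard a priori $L^2$-estimate for linear SDEs with bounded coefficients and square-integrable inhomogeneity (e.g.\ \cite[Theorem~3.3.1]{zhang}, which is already cited in the paper). Since $\mathscr{A}$, $\mathscr{B}$, and $\mathscr{C}^k$ for $k\in\{1,\ldots,m\}$ are all $dP\times ds|_{[0,T]}$-a.e.\ bounded by Remark~\ref{rem:remark_on_setting_assumptions2}(i), the estimate yields a constant $K\in(0,\infty)$ independent of $N$ such that
\begin{equation*}
E\Bigl[\sup_{s\in[t,T]} \lVert \Delta\scH(s)\rVert_F^2\Bigr] \leq K\, E\biggl[\int_t^T \lVert u(s)-u^N(s)\rVert_F^2\,ds\biggr] = K\,\lVert u-u^N\rVert_{\cL_t^2}^2.
\end{equation*}
If one prefers a self-contained derivation rather than invoking the cited theorem, the same bound follows by applying It\^o's formula to $\lVert\Delta\scH\rVert_F^2$, taking expectations, using Young's inequality on the cross terms, applying the Burkholder--Davis--Gundy inequality to handle the supremum from the martingale part, and closing the estimate by Gronwall's lemma; the bounded-coefficient assumption is what makes every step go through.

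The conclusion is then immediate: the right-hand side tends to zero as $N\to\infty$ by the hypothesis $\md(X,X^N)\to 0$ combined with the identification $\md(X,X^N) = \lVert u - u^N\rVert_{\cL_t^2}$. I do not anticipate any serious obstacle; the only point requiring care is verifying that the paper's boundedness assumptions on $\rho$, $\mu$, $\sigma$, and $\gamma^{-\frac12}\rho\gamma^{\frac12}$ collectively imply boundedness of $\mathscr{A}$, $\mathscr{B}$, and the $\mathscr{C}^k$ uniformly enough to apply the cited SDE estimate — but this is precisely what Remark~\ref{rem:remark_on_setting_assumptions2}(i) records.
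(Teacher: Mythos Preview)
The proposal is correct and follows essentially the same route as the paper: write the linear SDE for the difference $\scH-\scH^N$ with zero initial value, observe that the coefficients $\mathscr{A},\mathscr{B},\mathscr{C}^k$ are bounded, and apply the standard $L^2$ a~priori estimate for such SDEs to bound $E[\sup_s\lVert\scH-\scH^N\rVert_F^2]$ by a constant times $\lVert u-u^N\rVert_{\cL_t^2}^2$. The only cosmetic differences are that the paper writes the difference SDE's drift and diffusion out explicitly in terms of $\diagmu,\diagsigma_k,\gamma^{-\frac12}\rho\gamma^{\frac12}$ rather than $\mathscr{A},\mathscr{B},\mathscr{C}^k$, and cites \cite[Theorem~3.2.2]{zhang} (the a~priori estimate) rather than Theorem~3.3.1.
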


 We conclude this section with the main result \cref{thm:contextcostfct} that the cost functional~$\pmJ$ in~\eqref{eq:def_costfct_pm} can be regarded as a continuous extension of the cost functional~$\fvJ$ in~\eqref{eq:defcostfctmultivariate} from finite-variation strategies to progressively measurable strategies.

\begin{theo}\label{thm:contextcostfct} 
	(i) 
	Suppose that $X\in\cA^{pm}$.
	Then for every sequence $(X^N)_{N\in\N}$ in $\cA^{pm}$ with
	$\lim_{N \to \infty} \md(X^N,X) = 0$ it holds that
	$\lim_{N\to\infty} \lvert \pmJ(X^{N}) - \pmJ(X) \rvert = 0$.
	
	(ii)
	For every $X\in\cA^{pm}$
	there exists a sequence $(X^N)_{N\in\N}$ in $\cA^{fv}$ such that it holds  
    $\lim_{N \to \infty} \md(X^N,X) = 0$.
	
	(iii)
	It holds that 
	\begin{equation}\label{eq:inf_equal}
	\inf_{X\in\cA^{fv}}\fvJ(X)=
	\inf_{X\in\cA^{pm}}\pmJ(X).
	\end{equation}	
\end{theo}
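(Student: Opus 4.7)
The plan is to work through the cost representation \eqref{eq:cost_fct_shd} and the bijection $\varphi$ from \cref{lem:bijective}, which identifies $\cA_t^{pm}(x,d)$ isometrically (via $\md$ vs.\ $\md_{\cL_t^2}$, cf.\ \cref{rem:metric_and_cLt2}) with the Hilbert space $\cL_t^2$. Throughout I would set $u=\gamma^{-\frac12}D^X$ and $u^N=\gamma^{-\frac12}D^{X^N}$, so that convergence in $\md$ is exactly convergence in $\cL_t^2$.

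For part (i), the four ingredients in \eqref{eq:cost_fct_shd} depend on $u$, $\scH^u$, $\xi$, and $\zeta$. \cref{lem:convergence_SDE} gives $\scH^{u^N}\to\scH^u$ uniformly in $L^2$, while \eqref{eq:int_cond_terminal_pos} and \eqref{eq:int_cond_zeta} control $\gamma^{\frac12}\xi$ and $\gamma^{\frac12}\zeta$ in $L^2$. I would then argue term-by-term. The terminal quadratic and the $\mathscr{Q}$-weighted integral involving $\scH^{u^N}+\gamma^{\frac12}\zeta$ converge in $L^1$ by Cauchy--Schwarz using boundedness of $\mathscr{Q}$ and the $L^2$-convergence of $\scH^{u^N}$. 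For the $\KK$-weighted term $E_t[\int_t^T u^{N\top}\KK u^N\,ds]$, the PSD assumption on $\KK$ makes $(v,w)\mapsto E[\int_t^T v^\top \KK w\,ds]$ a positive semidefinite symmetric bilinear form on $\cL_t^2$ (this is the role of \cref{lem:convergence_helper}); expanding
\begin{equation*}
u^{N\top}\KK u^N - u^\top\KK u = (u^N-u)^\top\KK(u^N-u)+2(u^N-u)^\top\KK u
\end{equation*}
and applying Cauchy--Schwarz for this bilinear form together with boundedness of $\KK$ yields $L^1$-convergence. The cross term mixing $u$ and $\scH^u+\gamma^{\frac12}\zeta$ is handled identically. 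Finally, contractivity of $E_t[\cdot]$ on $L^1$ transfers $L^1$-convergence of the integrands to $L^1$-convergence of $\pmJ_t(x,d,X^N)$.

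For part (ii), I would construct FV approximations by a time-discretization scheme. Given $X\in\cA_t^{pm}(x,d)$, take partitions $t=s_0^N<s_1^N<\dots<s_N^N=T$ with vanishing mesh and set $\alpha_k^N=(s_{k+1}^N-s_k^N)^{-1}\int_{s_k^N}^{s_{k+1}^N}X(r)\,dr$, which is $\cF_{s_{k+1}^N}$-measurable by progressive measurability of $X$. Define $X^N$ to equal $x$ on $[t-,t)$, equal $\alpha_k^N$ on $[s_{k+1}^N,s_{k+2}^N)$ for $0\le k\le N-2$, and to jump to $\xi$ at time $T$; the resulting process is adapted, c\`adl\`ag, of finite variation, and satisfies $X^N(t-)=x$, $X^N(T)=\xi$. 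After a preliminary truncation ensuring $\int_t^T\|X\|_F^2\,ds\in L^1$ (the general case follows by a standard diagonal argument exploiting completeness of $(\cA_t^{pm}(x,d),\md)$ from \cref{rem:metric_and_cLt2}), step-function approximation in $L^2(dP\otimes ds)$ gives $X^N\to X$ in $L^2(dP\otimes ds)$, and inserting this into the explicit formula \eqref{eq:def_deviation_pm} together with $dP\otimes ds$-boundedness of $\gamma^{\pm\frac12}$, $\rho$, and $\gamma^{-\frac12}\rho\gamma^{\frac12}$ yields $\md(X^N,X)\to 0$. The integrability conditions \eqref{eq:A1} and \eqref{eq:A2} for $X^N$ reduce to finite sums at the jump times and follow from boundedness of the coefficients together with $\xi,\alpha_k^N\in L^2$. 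I expect this construction --- in particular, simultaneously enforcing the terminal constraint $X^N(T)=\xi$ while maintaining the $\md$-convergence and the integrability conditions --- to be the main technical obstacle; absorbing the terminal discrepancy into a single block trade at $T$ and invoking \eqref{eq:int_cond_terminal_pos} is what makes it work.

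Part (iii) then follows by combining (i), (ii) with \cref{cor:fvsubsetpm}. On one hand, \cref{cor:fvsubsetpm} gives $\cA_t^{fv}(x,d)\subset\cA_t^{pm}(x,d)$ with $\fvJ_t=\pmJ_t$ on this subset, so $\essinf_{X\in\cA_t^{pm}(x,d)}\pmJ_t(x,d,X)\le\essinf_{X\in\cA_t^{fv}(x,d)}\fvJ_t(x,d,X)$ a.s. Conversely, fix any $X\in\cA_t^{pm}(x,d)$; by part (ii) pick $X^N\in\cA_t^{fv}(x,d)$ with $\md(X^N,X)\to 0$, then part (i) gives $\pmJ_t(x,d,X^N)\to\pmJ_t(x,d,X)$ in $L^1$, and along an a.s.-convergent subsequence $\fvJ_t(x,d,X^{N_k})=\pmJ_t(x,d,X^{N_k})\to\pmJ_t(x,d,X)$ a.s., whence $\essinf_{X'\in\cA_t^{fv}(x,d)}\fvJ_t(x,d,X')\le\pmJ_t(x,d,X)$ a.s.; taking $\essinf$ over $X\in\cA_t^{pm}(x,d)$ finishes~\eqref{eq:inf_equal}.
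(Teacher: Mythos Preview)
Your arguments for parts (i) and (iii) match the paper's proof closely and are correct.

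Part (ii), however, contains a genuine gap. You invoke ``$dP\otimes ds$-boundedness of $\gamma^{\pm\frac12}$'' to pass from $X^N\to X$ in $L^2(dP\otimes ds)$ to $\md(X^N,X)\to 0$, but $\gamma^{\pm\frac12}$ is \emph{not} bounded: by \eqref{eq:def_gamma} its eigenvalues $\lambda_j$ are the stochastic exponentials \eqref{eq:dyn_lambda}, which are unbounded unless $\mu\equiv 0$ and $\sigma\equiv 0$. Since $\md$ measures $\gamma^{-\frac12}D^X$ in $\cL_t^2$ and the map $X\mapsto D^X$ in \eqref{eq:def_deviation_pm} involves $\gamma X$ and a stochastic integral against $d(\nu\gamma)$, convergence of $X^N$ in $L^2(dP\otimes ds)$ does not imply $\md(X^N,X)\to 0$ without control of $\gamma$ you do not have. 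The same unboundedness also undermines your verification of \eqref{eq:A1}--\eqref{eq:A2} for the piecewise-constant $X^N$: the deviation $D^{X^N}$ picks up factors $\gamma(s_k^N)$ at the jump times, and these need not pair integrably with the $L^2$ jumps $\alpha_k^N$.

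The paper's proof of (ii) proceeds in the opposite direction: instead of discretizing $X$ and computing $u$, it approximates $u=\gamma^{-\frac12}D^X\in\cL_t^2$ by processes $u^N=\OO^\top K v^N$, where $K$ is a specific diagonal stochastic exponential (\cref{lem:KS_approx_seq_finite_var}, \cref{cor:KS_approx_seq_finite_var_O}) and the $v^N$ are bounded c\`adl\`ag finite-variation processes; then $X^N=\ol\varphi(u^N)$ via \cref{lem:bijective}. Convergence $\md(X^N,X)=\lVert u^N-u\rVert_{\cL_t^2}\to 0$ is then immediate, and the work goes into showing that $X^N$ has finite variation. This is \cref{lem:dynamics_of_X_when_u_semimart}: the dynamics of $K$ are chosen to match those of $\lam^{-\frac12}$ in such a way that in $dX^N=\gamma^{-\frac12}\,du^N+(d\gamma^{-\frac12})(u^N-\scH^{u^N})+\gamma^{-\frac12}\,d(-\scH^{u^N})+\ldots$ all $dW$-contributions cancel exactly, leaving only a finite-variation part. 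A generic semimartingale (or piecewise-constant) approximation of $u$ would not produce a finite-variation $X^N$, and a piecewise-constant approximation of $X$ does not yield $\md$-convergence; the tailored choice of $K$ is what makes both hold simultaneously.
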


\section{Solution of the trade execution problem and examples}
\label{sec:soln}

 Observe that~\eqref{eq:cost_fct_shd} in \cref{lem:pm_cost_fct_as_LQ} is a representation of the cost functional~\eqref{eq:def_costfct_pm} as a cost functional which is quadratic in $(\gamma^{-\frac12} D^X,\scH^{\gamma^{-\frac12} D^X})$
 and wherein  $\gamma^{-\frac12} D^X$ appears only as an integrand and $\scH^{\gamma^{-\frac12} D^X}$ appears only as an integrand or evaluated at the terminal time. 
 Moreover, the SDE~\eqref{eq:def_scH_u} for $\scH^{\gamma^{-\frac12} D^X}$ is linear in $(\gamma^{-\frac12} D^X,\scH^{\gamma^{-\frac12} D^X})$. 
 Thus, we notice in \cref{sec:equivLQ} that the problem to minimize the right-hand side of~\eqref{eq:cost_fct_shd} over all $\gamma^{-\frac12} D^X \in \cL^2$ subject to the state process $\scH^{\gamma^{-\frac12} D^X}$ has a ``standard'' form 
 so that, under suitable assumptions, we can apply results from the literature
 on linear-quadratic (LQ) stochastic optimal control to solve that problem. 
 More specifically, in \cref{sec:zerotargets} we employ results from Sun et al.~\cite{sun2021indefiniteLQ} to obtain, under \cref{assump_filtration} and \cref{assump_convexity}, a unique solution of the LQ stochastic control problem of \cref{sec:equivLQ} 
 in the case of zero targets\footnote{Non-vanishing targets can, under suitable assumptions, be treated by combining results from Kohlmann \& Tang~\cite{kohlmann2003minimization} and Sun et al.~\cite{sun2021indefiniteLQ}; see also \cref{sec:appendixsolving}.} and a representation in terms of the solution of a Riccati BSDE. 
 A way back to a unique solution of the trade execution problem for progressively measurable strategies  
 in \cref{sec:pm_problem} is provided by \cref{cor:linkLQpm}, which results in \cref{cor:soln_opt_trade_execution}. 
 In \cref{sec:examples} we then use \cref{cor:soln_opt_trade_execution} to study optimal strategies in some examples.

\subsection{An equivalent LQ stochastic control problem}
\label{sec:equivLQ}

For 
$u\in\cL^2$ we define the cost functional $\LQJ$ by 
\begin{equation}\label{eq:def_cost_fct_LQ}
\begin{split}
\LQJ(u) 
& = \frac12 E\big[ \big(\scH^u(T)+\gamma^{\frac12}(T)\xi \big)^\top \big(\scH^u(T)+\gamma^{\frac12}(T) \xi \big) \big] 
+ E\bigg[ \int_0^T (u(s))^\top 
\KK(s) 
u(s)  ds \bigg] \\
& \quad 
+ E\bigg[ \int_0^T \big(\scH^u(s) + \gamma^{\frac12}(s) \zeta(s) \big)^\top \mathscr{Q}(s) \big(\scH^u(s) + \gamma^{\frac12}(s) \zeta(s) \big) ds \bigg] \\
& \quad - E\bigg[ \int_0^T 2 (u(s))^\top \mathscr{Q}(s) \big(\scH^u(s) + \gamma^{\frac12}(s) \zeta(s) \big)  ds \bigg]   
 ,
\end{split}
\end{equation} 
where the state process $\scH^u=(\scH^u(s))_{s\in[0,T]}$ is the solution to the SDE~\eqref{eq:def_scH_u} and~$\mathscr{Q}$ and~$\KK$ are defined in~\eqref{eq:def_Q} and~\eqref{eq:def_KK}.

It is a direct consequence of \Cref{lem:bijective} and \Cref{lem:pm_cost_fct_as_LQ} 
that the control problems pertaining to $\pmJ$ and $\LQJ$ are equivalent in the following sense.

\begin{corollary}\label{cor:linkLQpm}
	(i) It holds a.s.\ that 
	\begin{equation*}
		\begin{split}
			\inf_{X \in \cA^{pm}} \pmJ(X) 
			& = \inf_{u\in\cL^2} \LQJ(u) - \tfrac12 d^\top \gamma^{-1}(0) d .
		\end{split}
	\end{equation*}
	
	(ii)
	Suppose that $X^*=(X^*(s))_{s \in [0,T]} \in \cA^{pm}$  minimizes 
    $\pmJ$ 
    over $\cA^{pm}$ and let $D^{X^*}$ be the associated deviation process.
	Then, $u^*=(u^*(s))_{s \in [0,T]}$ defined by 
	$$u^*(s)=\gamma^{-\frac12}(s) D^{X^*}(s), \quad s \in [0,T],$$ minimizes 
    $\LQJ$
    over $\cL^2$.
	
	(iii) 
	Suppose that $u^*=(u^*(s))_{s \in [0,T]} \in \cL^2$ minimizes 
    $\LQJ$ 
    over $\cL^2$ and let $\scH^{u^*}$ be the associated solution of \eqref{eq:def_scH_u} for $u^*$.
	Then, $X^*=(X^*(s))_{s\in[0,T]}$ defined by 
	\begin{equation*}
		\begin{split}
			& X^*(s)=\gamma^{-\frac12}(s) (u^*(s)-\scH^{u^*}(s)), 
			\quad s \in [0,T), \\
			& X^*(0-) = x, \quad X^*(T)=\xi,
		\end{split}
	\end{equation*}
	minimizes 
    $\pmJ$
    over $\cA^{pm}$. 
	Furthermore, the deviation process $D^{X^*}$ defined in \eqref{eq:def_deviation_pm} satisfies for all $s \in [0,T)$ that $D^{X^*}(s) = \gamma^{\frac12}(s) u^*(s)$. 
	
	(iv)
	There exists a $dP\times ds$-a.e.\ unique minimizer of 
    $\LQJ$
    in $\cL^2$ if and only if there exists a $dP\times ds$-a.e.\ unique minimizer of 
    $\pmJ$ 
    in $\cA^{pm}$. 
\end{corollary}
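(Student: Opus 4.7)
The plan is to recognize this corollary as an essentially tautological consequence of three previously established ingredients: the essential infimum equality in \cref{thm:contextcostfct}(iii), the bijection $\varphi\colon \cA_t^{pm}(x,d) \to \cL_t^2$ from \cref{lem:bijective}, and the reformulation of $\pmJ$ in \cref{lem:pm_cost_fct_as_LQ}. The crucial first observation to record is the pointwise identity
\begin{equation*}
\pmJ_t(x,d,X) = \LQJ_t\big(\gamma^{-\frac12}(t)d - \gamma^{\frac12}(t)x,\, \varphi(X)\big) - \tfrac12 d^\top \gamma^{-1}(t) d
\end{equation*}
holding a.s.\ for every $X \in \cA_t^{pm}(x,d)$. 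To derive this identity I would substitute $u = \varphi(X) = \gamma^{-\frac12} D^X$ into the right-hand side of \eqref{eq:def_cost_fct_LQ} and note that, since $\scH^u$ is by definition the unique solution of \eqref{eq:def_scH_u} with initial value $\gamma^{-\frac12}(t) d - \gamma^{\frac12}(t) x$ driven by $u$, it coincides with the process $\scH^{\gamma^{-\frac12} D^X}$ appearing in \eqref{eq:cost_fct_shd}; a term-by-term comparison with \eqref{eq:cost_fct_shd} then yields the identity.

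With this identity in hand, part~(i) follows by chaining \cref{thm:contextcostfct}(iii) with the bijectivity of $\varphi$: the map $X \mapsto \varphi(X)$ establishes a one-to-one correspondence between $\cA_t^{pm}(x,d)$ and $\cL_t^2$ along which the cost functionals differ only by the $X$-independent term $\tfrac12 d^\top \gamma^{-1}(t) d$, so the essential infimums match after this shift. Parts~(ii) and~(iii) translate directly via the identity: any pointwise a.s.\ comparison $\pmJ_t(x,d,X) \leq \pmJ_t(x,d,\wt X)$ is equivalent to the corresponding comparison for $\LQJ$ at $\varphi(X)$ and $\varphi(\wt X)$, because the subtracted constant is $\cF_t$-measurable and control-independent. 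The formula $D^{X^*}(s) = \gamma^{\frac12}(s) u^*(s)$ in~(iii) is precisely the last assertion of \cref{lem:bijective} applied to $u^*$ and its preimage $\ol{\varphi}(u^*)$. The additional statement in~(iii) about minimization over $\cA_t^{fv}(x,d)$ uses \cref{cor:fvsubsetpm}, which gives both $\cA_t^{fv}(x,d) \subseteq \cA_t^{pm}(x,d)$ and the agreement $\fvJ_t(x,d,X^*) = \pmJ_t(x,d,X^*)$: since $X^*$ minimizes $\pmJ$ over the larger set, it a fortiori minimizes $\fvJ$ over the smaller set.

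Finally, part~(iv) is a bookkeeping consequence of (ii) and (iii) combined with the bijectivity of $\varphi$. A minimizer of $\pmJ$ corresponds via $\varphi$ to a minimizer of $\LQJ$ and conversely, and the equivalence-class structures match: two strategies $X, \wt X \in \cA_t^{pm}(x,d)$ are $dP \times ds|_{[t,T]}$-a.e.\ equal iff their images $\gamma^{-\frac12} D^X, \gamma^{-\frac12} D^{\wt X}$ in $\cL_t^2$ are, as the inverse formula~\eqref{eq:1607} shows that $X$ is recovered deterministically from $\varphi(X)$ together with the solution $\scH^{\varphi(X)}$ of~\eqref{eq:def_scH_u}. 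I do not anticipate any substantive obstacle here: no new estimates, approximations, or regularity arguments are required beyond those already packaged into \cref{thm:contextcostfct}, \cref{lem:bijective}, and \cref{lem:pm_cost_fct_as_LQ}. The only point demanding a moment of care is the coincidence $\scH^u = \scH^{\gamma^{-\frac12} D^X}$ used in the master identity, and this is immediate from uniqueness of solutions to the linear SDE~\eqref{eq:def_scH_u}.
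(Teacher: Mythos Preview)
Your proposal is correct and follows essentially the same approach as the paper, which states the corollary as a direct consequence of \cref{thm:contextcostfct}, \cref{lem:bijective}, and \cref{lem:pm_cost_fct_as_LQ} without spelling out further details. Your master identity and its derivation are exactly the intended link, and your handling of the finite-variation claim in (iii) via \cref{cor:fvsubsetpm} and of (iv) via the bijection on equivalence classes is sound.
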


\subsection{Solution of the trade execution problem for progressively measurable strategies with zero targets}
\label{sec:zerotargets}

We here consider the case of a zero terminal position $\xi=0$ and a zero running target $\zeta=0$, in which case the cost functional in \cref{sec:equivLQ} does not contain inhomogeneities.  
We make the following standard assumption of a Brownian filtration.  

\begin{assumption}\label{assump_filtration}
	Assume that the filtration $(\cF_s)_{s\in[0,T]}$ 
	is the augmented natural filtration of the $m$-dimensional Brownian motion~$W$. 
\end{assumption}

In Sun et al.~\cite{sun2021indefiniteLQ}, which we are about to apply, there is a uniform convexity assumption on the cost functional.
To formulate this assumption, we consider the cost functional $J^0$, which we define as in \eqref{eq:def_cost_fct_LQ} but with $x$ and $d$ satisfying $\gamma^{-\frac12}(0) d - \gamma^{\frac12}(0) x = 0$, that is, the state process $\scH^u$ controlled by $u\in\cL^2$ starts in $0$. This restriction is solely used for the formulation of the following assumption; the results below also apply when $\gamma^{-\frac12}(0) d - \gamma^{\frac12}(0) x \neq 0$.

\begin{assumption}\label{assump_convexity}
	Assume that 
	there exists $\delta \in (0,\infty)$ such that for all $u \in \cL^2$ it holds that 
	\begin{equation}\label{eq:convexity_cond}
        J^0(u)
        \ge \delta E\bigg[ \int_0^T \lVert u(s) \rVert_F^2 ds \bigg] .
	\end{equation}
\end{assumption}

This assumption implies that the LQ problem with zero targets in a Brownian filtration is uniquely solvable for any initial condition $\gamma^{-\frac12}(0) d - \gamma^{\frac12}(0) x \in \R^n$ (cf.~\cite[Equation~(44), Corollary~3.5(ii), and Definition~1.1(ii)]{sun2021indefiniteLQ}).

\begin{remark}\label{rem:sufficient_cond_convex}
	Consider the case $\xi=0$ and $\zeta=0$. 
	For sufficient conditions for \cref{assump_convexity} to hold, 
	we refer to \cite[Section~7]{sun2021indefiniteLQ}. 
	In particular, if $\risk \equiv 0$ and there exists $\delta \in (0,\infty)$ such that $\KK-\delta I_n$ is $\cS_{\ge 0}^n$-valued, 
	then~\eqref{eq:convexity_cond} is satisfied (cf.\ \cite[equation~(5)]{sun2021indefiniteLQ}). 
	Moreover, if $\risk \equiv 0$ and $\KK$ is $\cS_{\ge 0}^n$-valued and there exists $\delta \in (0,\tfrac12)$ such that $4\sum_{k=1}^m \mathscr{C}^k\mathscr{C}^k -\delta I_n$ is $\cS_{\ge 0}^n$-valued, 
	then \eqref{eq:convexity_cond} is satisfied (cf.\ \cite[equation~(8)]{sun2021indefiniteLQ}). 
	A more complex sufficient condition is provided in \cite[Theorem~7.3]{sun2021indefiniteLQ}. 
	Furthermore, note that if $\mathscr{Q}$ is $\cS_{\ge 0}^n$-valued and there exists $\delta \in (0,\infty)$ such that $\kap-\delta I_n$ is $\cS_{\ge 0}^n$-valued, then \eqref{eq:convexity_cond} is satisfied as well.
\end{remark}

We next introduce a matrix-valued BSDE of Riccati type, which is strongly connected to the LQ problem of \cref{sec:equivLQ}:
\begin{equation}\label{eq:BSDE}
	\begin{split}
		d \mathscr{Y}(s)
		& = - g\big(s,\cdot,\mathscr{Y}(s),\mathscr{Z}^1(s),\ldots,\mathscr{Z}^m(s)\big) ds
		+ \sum_{k=1}^m \mathscr{Z}^k(s) dW_k(s),
		\quad s \in [0,T],  \\
		\mathscr{Y}(T) & = \tfrac12 I_n
	\end{split}
\end{equation}
with the driver 
\begin{equation}\label{eq:driver_of_BSDE}
	\begin{split}
		& g\big(s,\omega,\mathscr{Y}(s,\omega),\mathscr{Z}^1(s,\omega),\ldots,\mathscr{Z}^m(s,\omega)\big) \\
		& = \mathscr{Y}\mathscr{A} + \mathscr{A} \mathscr{Y} 
		+ \mathscr{Q}
		+ \sum_{k=1}^m \big( \mathscr{C}^k \mathscr{Y} \mathscr{C}^k + \mathscr{Z}^k \mathscr{C}^k + \mathscr{C}^k \mathscr{Z}^k \big) 
		 \\
		& \quad - \bigg( \mathscr{Y} \mathscr{B}  - \mathscr{Q} - 2 \sum_{k=1}^m \big( \mathscr{C}^k \mathscr{Y} \mathscr{C}^k + \mathscr{Z}^k \mathscr{C}^k \big) \bigg)
		\cdot \bigg( \KK + 4 \sum_{k=1}^m \mathscr{C}^k \mathscr{Y} \mathscr{C}^k  \bigg)^{-1} \\
		& \qquad \cdot 
		\bigg( \mathscr{B}^\top \mathscr{Y}  - \mathscr{Q} - 2 \sum_{k=1}^m \big( \mathscr{C}^k \mathscr{Y} \mathscr{C}^k + \mathscr{C}^k \mathscr{Z}^k \big)  \bigg), 
	\end{split}
\end{equation}
where on the right-hand side of the equation for the driver we suppressed the dependence on $\omega\in\Omega$ and $s \in [0,T]$. 
A pair $(\mathscr{Y},\mathscr{Z})$ with $\mathscr{Z}=(\mathscr{Z}^1,\ldots,\mathscr{Z}^m)$ is called  
a solution of the BSDE~\eqref{eq:BSDE} if 
\begin{itemize}
	\item the process $\mathscr{Y} \colon [0,T]\times \Omega \to \cS^n$ is bounded, adapted, and continuous,
	
	\item for every $k\in\{1,\ldots,m\}$ it holds that the process $\mathscr{Z}^k\colon [0,T]\times \Omega \to \cS^n$ is progressively measurable and satisfies $E[\int_0^T \lVert \mathscr{Z}^k(s) \rVert_F^2 ds] < \infty$, 
	
	\item 
	the process 
	$\KK + 4 \sum_{k=1}^m \mathscr{C}^k \mathscr{Y} \mathscr{C}^k$ is    
	$dP\times ds$-a.e.\ $\cS^n_{> 0}$-valued, 
	and
	
	\item the BSDE~\eqref{eq:BSDE} is satisfied $P$-a.s.
\end{itemize}

Given a solution $(\mathscr{Y},\mathscr{Z})$ of the BSDE~\eqref{eq:BSDE}, we define the matrix-valued progressively measurable process $\theta=(\theta(s))_{s \in [0,T]}$ by, for all $s\in[0,T]$, 
\begin{equation}\label{eq:def_theta}
	\begin{split}
		\theta(s) & = 
		- \bigg( \KK(s) + 4 \sum_{k=1}^m \mathscr{C}^k(s) \mathscr{Y}(s) \mathscr{C}^k(s)  \bigg)^{-1} \\
		& \quad \cdot 
		\bigg( (\mathscr{B}(s))^\top \mathscr{Y}(s) - \mathscr{Q}(s) - 2 \sum_{k=1}^m \big( \mathscr{C}^k(s) \mathscr{Y}(s) \mathscr{C}^k(s) + \mathscr{C}^k(s) \mathscr{Z}^k(s) \big) \bigg) . 
	\end{split}
\end{equation}
In addition, given 
a solution $(\mathscr{Y},\mathscr{Z})$ of the BSDE~\eqref{eq:BSDE},
we consider the $\R^n$-valued SDE 
\begin{equation}\label{eq:SDE_optimal_state}
	\begin{split}
		d\scH^*(s) & = 
		\big[ \mathscr{A}(s) + \mathscr{B}(s) \theta(s) \big] \scH^*(s) ds 
		+ \sum_{k=1}^m \big[ \mathscr{C}^k(s) (I_n - 2 \theta(s)) \big] \scH^*(s) dW_k(s), 
		\; s\in[0,T], \\
		\scH^*(0) & = \gamma^{-\frac12}(0) d - \gamma^{\frac12}(0) x.
	\end{split}
\end{equation}

We now apply results from Sun et al.~\cite{sun2021indefiniteLQ} to our situation.
This leads to \cref{propo:soln_LQ_Sun}, where the LQ stochastic control problem given by \eqref{eq:def_cost_fct_LQ} and \eqref{eq:def_scH_u} is solved 
in the case of zero targets.

\begin{propo}\label{propo:soln_LQ_Sun}
	Assume that $\xi=0$ and $\zeta=0$. 
	Let \cref{assump_filtration} and \cref{assump_convexity} be in force.
	
	(i)
	There exists a unique solution $(\mathscr{Y},\mathscr{Z})$ of the BSDE~\eqref{eq:BSDE}.  
	Moreover, there exists $\varepsilon\in(0,\infty)$ such that 
	$$\KK + 4 \sum_{k=1}^m \mathscr{C}^k \mathscr{Y} \mathscr{C}^k -\varepsilon I_n$$ 
	is    
	$dP\times ds$-a.e.\ $\cS^n_{\ge 0}$-valued.
	
	(ii)
	Let $(\mathscr{Y},\mathscr{Z})$ be the unique solution of the BSDE~\eqref{eq:BSDE}. 
	Then there exists a unique $u^*\in\cL^2$ such that for all $u\in\cL^2$ it holds $P$-a.s.\ that $\LQJ(u^*)\le \LQJ(u)$. 
	Moreover, there exists a unique solution $\scH^*$ of the SDE~\eqref{eq:SDE_optimal_state} (the state process associated to~$u^*$), and the unique optimal control~$u^*$ admits the representation 
	\begin{equation}
		u^*(s)=\theta(s)\scH^*(s), 
		\quad s \in[0,T], 
	\end{equation}
	where $\theta$ is defined in~\eqref{eq:def_theta}.
	
	(iii)
	Let $(\mathscr{Y},\mathscr{Z})$ be the unique solution of the BSDE~\eqref{eq:BSDE}. 
    Then 
	\begin{equation}\label{eq:optcostsLQ}
		\inf_{u\in\cL^2} \LQJ(u)
		= \bigl( \gamma^{-\frac12}(0) d - \gamma^{\frac12}(0) x \bigr)^\top \mathscr{Y}(0) \bigl( \gamma^{-\frac12}(0) d - \gamma^{\frac12}(0) x \bigr)
		.
	\end{equation}
\end{propo}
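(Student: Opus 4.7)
The plan is to recognize that, once $\xi = 0$ and $\zeta = 0$, the problem of minimizing $\LQJ_0(\gamma^{-\frac12}(0)d - \gamma^{\frac12}(0)x, \cdot)$ over $u \in \cL_0^2$ subject to the controlled SDE~\eqref{eq:def_scH_u} is a standard homogeneous indefinite LQ stochastic control problem of the form treated in Sun et al.~\cite{sun2021indefiniteLQ}. I would make the correspondence explicit: the state drift coefficients are $A = \mathscr{A}$, $B = \mathscr{B}$; the diffusion coefficients are $C_k = \mathscr{C}^k$ and $D_k = -2\mathscr{C}^k$ for $k\in\{1,\ldots,m\}$; the running cost matrices are $Q = \mathscr{Q}$, $S = -\mathscr{Q}$, $R = \KK$; and the terminal cost matrix is $G = \tfrac{1}{2} I_n$. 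By \cref{rem:remark_on_setting_assumptions2}(i), $\mathscr{A}$, $\mathscr{B}$, $\mathscr{C}^k$, $\mathscr{Q}$, and $\KK$ are $dP \times ds|_{[0,T]}$-a.e.\ bounded, which matches the boundedness hypothesis in~\cite{sun2021indefiniteLQ}. \cref{assump_filtration} supplies the Brownian filtration requirement, and \cref{assump_convexity} is precisely their uniform convexity assumption on the cost functional.

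For (i), I would invoke the main existence and uniqueness theorem in \cite{sun2021indefiniteLQ} for the Riccati-type BSDE associated to such an indefinite LQ problem. This yields a unique pair $(\mathscr{Y},\mathscr{Z})$ with $\mathscr{Y}$ bounded, continuous, symmetric, and adapted, and $\mathscr{Z}^k$ square integrable, satisfying the BSDE with driver given by the standard Riccati expression
\[
YA + A^\top Y + \textstyle\sum_k (C_k^\top Y C_k + Z_k C_k + C_k^\top Z_k) + Q - K^\top N^{-1} K,
\]
where $K = B^\top Y + \sum_k D_k^\top Y C_k + \sum_k D_k^\top Z_k + S$ and $N = R + \sum_k D_k^\top Y D_k$. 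A direct substitution using $D_k = -2\mathscr{C}^k$, $S = -\mathscr{Q}$ produces $N = \KK + 4\sum_k \mathscr{C}^k \mathscr{Y} \mathscr{C}^k$ and $K = \mathscr{B}^\top\mathscr{Y} - \mathscr{Q} - 2\sum_k(\mathscr{C}^k \mathscr{Y}\mathscr{C}^k + \mathscr{C}^k \mathscr{Z}^k)$; this identifies the driver with~\eqref{eq:driver_of_BSDE}. The uniform positive-definiteness of $N$ (i.e.\ the existence of $\varepsilon>0$) is a standard consequence of \cref{assump_convexity} in \cite{sun2021indefiniteLQ}.

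For (ii), the same reference yields the feedback gain $\Theta = -N^{-1} K^\top$ for the unique optimal control, which coincides with $\theta$ in \eqref{eq:def_theta}. Substituting $u^*(s) = \theta(s)\scH^*(s)$ into \eqref{eq:def_scH_u} gives the closed-loop SDE \eqref{eq:SDE_optimal_state}, whose coefficients $\mathscr{A} + \mathscr{B}\theta$ and $\mathscr{C}^k(I_n - 2\theta)$ are bounded (since $\theta$ is bounded thanks to $N^{-1}$ being bounded and $K$ being square-integrable in a suitable BMO sense — again from \cite{sun2021indefiniteLQ}), so existence and uniqueness of $\scH^*$ follows from standard linear SDE theory, and $u^* = \theta \scH^* \in \cL_0^2$. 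Uniqueness of the optimal control among $\cL_0^2$ is a consequence of the strict convexity coming from \cref{assump_convexity}. For (iii), the standard verification argument in indefinite LQ — applying It\^o's formula to $s \mapsto \scH^u(s)^\top \mathscr{Y}(s) \scH^u(s)$, exploiting the Riccati equation and the boundary condition $\mathscr{Y}(T) = \tfrac12 I_n$, and completing the square in $u$ — yields $\LQJ_0(y,u) = y^\top \mathscr{Y}(0) y + E[\int_0^T (u - \theta \scH^u)^\top N (u - \theta\scH^u) ds]$, from which \eqref{eq:optcostsLQ} follows upon taking the infimum.

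The main obstacle I anticipate is purely bookkeeping: correctly tracking the cross coefficient $S = -\mathscr{Q}$ together with $D_k = -2\mathscr{C}^k$ through the Riccati driver and the feedback-gain formula, so that \eqref{eq:driver_of_BSDE} and \eqref{eq:def_theta} match the expressions produced by \cite{sun2021indefiniteLQ} under the correspondence above. Once this identification is pinned down and the standing boundedness and convexity assumptions are verified, (i)–(iii) are essentially immediate translations of the results in \cite{sun2021indefiniteLQ}.
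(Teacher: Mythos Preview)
Your proposal is correct and takes essentially the same approach as the paper: verify the boundedness hypotheses via \cref{rem:remark_on_setting_assumptions2}, note that \cref{assump_filtration} and \cref{assump_convexity} supply the remaining standing assumptions of Sun et al.~\cite{sun2021indefiniteLQ}, and then invoke their Theorem~9.1 for (i)--(ii) and their Sections~5--6 for (iii). The paper's proof is in fact shorter than yours---it does not spell out the coefficient identification or the completion-of-squares argument---but your extra bookkeeping is exactly the content hidden behind the citation.
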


\begin{remark}\label{rem:necessaryforconvex}
	Suppose that \cref{assump_filtration} and \cref{assump_convexity} are in force and that $\sigma= 0$, $\xi=0$, and $\zeta=0$. 
	Then \cref{propo:soln_LQ_Sun} shows that there must exist $\varepsilon \in (0,\infty)$ such that $\KK-\varepsilon I_n$ is $dP\times ds$-a.e.\ $\cS^n_{\ge 0}$-valued.
	In particular, if $\risk \equiv 0$ then $\kap$ has to be $dP\times ds$-a.e.\ $\cS_{>0}^n$-valued.
\end{remark}

In \cref{cor:soln_opt_trade_execution} we state the solution of the trade execution problem of \cref{sec:pm_problem} 
with zero targets $\xi=0$ and $\zeta=0$. 
This result is obtained by combining \cref{propo:soln_LQ_Sun} and \cref{cor:linkLQpm}. 

\begin{corollary}\label{cor:soln_opt_trade_execution}
	Assume that $\xi=0$ and $\zeta=0$.
	Let \cref{assump_filtration} and \cref{assump_convexity} be in force. 
	Let $(\mathscr{Y},\mathscr{Z})$ be the unique solution of the BSDE~\eqref{eq:BSDE} (cf.\ \cref{propo:soln_LQ_Sun}). 
	Recall the definition~\eqref{eq:def_theta} of $\theta$, 
    and let $\scH^*$ be the unique solution of the SDE~\eqref{eq:SDE_optimal_state} (cf.\ \cref{propo:soln_LQ_Sun}). 
	
	Then there exists a unique (up to $dP\times ds$-null sets) minimizer $X^*$ of 
    $\pmJ$ 
    in $\cA^{pm}$. 
	Moreover, it holds that 
	\begin{equation}\label{eq:def_opt_strat}
		\begin{split}
			& X^*(0-)=x, \quad X^*(T)=0,\\
			& X^*(s)
			= \gamma^{-\frac12}(s)
			\big( \theta(s) - I_n \big) \scH^*(s),
			\quad s\in[0,T). 
		\end{split}
	\end{equation} 
	The deviation process $D^*:= D^{X^*}$ (defined in \eqref{eq:def_deviation_pm}) satisfies 
	that 
	\begin{equation}\label{eq:def_opt_dev}
		D^*(s) 
		= \gamma^{\frac12}(s) \theta(s) \scH^*(s), 
		\quad s \in [0,T). 
	\end{equation}
	The optimal costs are given by 
	\begin{equation}\label{eq:optcosts_trade_execution_soln}
		\begin{split}
			\inf_{X \in \cA^{pm}} \pmJ(X) 
			& = d^\top  \gamma^{-\frac12}(0) \big(   \mathscr{Y}(0) - \tfrac12 I_n  \big) \gamma^{-\frac12}(0) d \\
			& \quad 
			- 2 d^\top \gamma^{-\frac12}(0) \mathscr{Y}(0)
			\gamma^{\frac12}(0) x 
			+ x^\top \gamma^{\frac12}(0) \mathscr{Y}(0) \gamma^{\frac12}(0) x 
			.
		\end{split}
	\end{equation}
\end{corollary}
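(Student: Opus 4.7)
The plan is to obtain this result by essentially pasting together \cref{propo:soln_LQ_Sun} (which solves the associated LQ problem at $t=0$) with \cref{cor:linkLQpm} (which turns the LQ optimizer into a trade execution optimizer via the bijection $\varphi$). All assumptions needed for both results are in force: \cref{assump_filtration} and \cref{assump_convexity} feed \cref{propo:soln_LQ_Sun}, while the added $\cS_{\ge0}^n$-valuedness of $\mathscr{Q}$ and $\KK$ is exactly what \cref{thm:contextcostfct} and hence \cref{cor:linkLQpm} require.

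First I invoke \cref{propo:soln_LQ_Sun}(i)--(ii): the Riccati BSDE \eqref{eq:BSDE} admits a unique solution $(\mathscr{Y},\mathscr{Z})$, the feedback $\theta$ in \eqref{eq:def_theta} is well defined, and there is a unique minimizer $u^*\in\cL_0^2$ of $\LQJ_0(\gamma^{-\frac12}(0)d-\gamma^{\frac12}(0)x,\cdot)$ with representation $u^*(s)=\theta(s)\scH^*(s)$, where $\scH^*$ solves \eqref{eq:SDE_optimal_state}. The key small verification is that the process $\scH^{u^*}$ appearing in the controlled SDE \eqref{eq:def_scH_u} coincides with $\scH^*$: substituting $u^*=\theta\scH^*$ into \eqref{eq:def_scH_u} reproduces exactly the closed-loop SDE \eqref{eq:SDE_optimal_state}, so by uniqueness of solutions of a linear SDE with bounded (square-integrable) coefficients, $\scH^{u^*}=\scH^*$. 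Applying \cref{cor:linkLQpm}(iii) at $t=0$ then gives the candidate optimal strategy
\[
X^*(s)=\gamma^{-\frac12}(s)\bigl(u^*(s)-\scH^{u^*}(s)\bigr)=\gamma^{-\frac12}(s)\bigl(\theta(s)-I_n\bigr)\scH^*(s),\quad s\in[0,T),
\]
together with $D^{X^*}(s)=\gamma^{\frac12}(s)u^*(s)=\gamma^{\frac12}(s)\theta(s)\scH^*(s)$, matching \eqref{eq:def_opt_strat} and \eqref{eq:def_opt_dev}. The uniqueness of $X^*$ in $\cA_0^{pm}(x,d)$ (up to $dP\times ds$-null sets) transfers from the uniqueness of $u^*$ by \cref{cor:linkLQpm}(iv).

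For the optimal-cost formula I combine \cref{cor:linkLQpm}(i) with \cref{propo:soln_LQ_Sun}(iii): setting $h:=\gamma^{-\frac12}(0)d-\gamma^{\frac12}(0)x$,
\[
\inf_{X\in\cA_0^{pm}(x,d)}\pmJ_0(x,d,X)=h^\top\mathscr{Y}(0)h-\tfrac12 d^\top\gamma^{-1}(0)d.
\]
Expanding $h^\top\mathscr{Y}(0)h$ using the symmetry of $\mathscr{Y}(0)$ and absorbing the $-\tfrac12 d^\top\gamma^{-1}(0)d$ term into the $d$-$d$ quadratic piece as $d^\top\gamma^{-\frac12}(0)(\mathscr{Y}(0)-\tfrac12 I_n)\gamma^{-\frac12}(0)d$ yields \eqref{eq:optcosts_trade_execution_soln}. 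The entire argument is essentially a bookkeeping step; the only non-trivial point is the identification $\scH^{u^*}=\scH^*$ (and keeping track of which variable lives in which space), while the rest is purely routine algebra. I do not anticipate any hard obstacle beyond carefully unpacking the definitions of $\varphi$ and $\ol\varphi$ from \cref{lem:bijective}.
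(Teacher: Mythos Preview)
Your proposal is correct and follows exactly the approach the paper indicates: combining \cref{propo:soln_LQ_Sun} with \cref{cor:linkLQpm}. The only minor remark is that the identification $\scH^{u^*}=\scH^*$ is already built into the statement of \cref{propo:soln_LQ_Sun}(ii) (which calls $\scH^*$ ``the state process associated to~$u^*$''), so your separate verification, while correct, is not strictly needed.
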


\begin{remark}\label{rem:pmvsfv}
	If in \cref{cor:soln_opt_trade_execution} the optimal progressively measurable execution strategy $X^*$ is in $\cA^{fv}$, then $X^*$ is also the unique solution of the finite-variation stochastic control problem of \cref{sec:fv_problem} (cf.~\cref{thm:contextcostfct}). 
	However, there does not always exist an optimizer in this smaller class; see, for example, \cite[Example~6.4]{ackermann2020cadlag} and \cite[Section~4.3]{ackermann2022reducing}. These one-dimensional counterexamples demonstrate that the optimizer is not attained even within the class of semimartingales (the extension of the stochastic control problem to semimartingales is provided in \cite{ackermann2020cadlag}).
    In this context, we also note that several further works have considered trading strategies going beyond the finite-variation framework including, for example, 
    \cite{lorenz2013drift}, \cite{HorstKivman2024}, 
    and \cite{becherer2019stability}.
    Moreover, there is also empirical evidence for trading with inventories beyond finite variation (see, for example, \cite{carmona2019selffinancing} and \cite{CarmonaLeal2023}).
\end{remark}

\subsection{Examples}
\label{sec:examples}

We can now use \cref{cor:soln_opt_trade_execution} to study optimal strategies in some examples. 
Note that additional content and figures can be found in \cref{sec:appendix:examples}.

First, we observe that if $d=\gamma(0) x$ or $\rho \equiv 0$, then the naive strategy to close all positions immediately is optimal. 

\begin{lemma}\label{lem:closing_immed}
	Assume that $\xi=0$ and $\zeta=0$. 
	Let \cref{assump_filtration} and \cref{assump_convexity} be in force. 
	If $d=\gamma(0) x$ or $\rho \equiv 0$, then 
	$X^*(0-)=x$, $X^*(s)=0$, $s\in[0,T]$, 
    is the optimal strategy in 
	$\cA^{pm}$ for 
    $\pmJ$ 
    and the optimal strategy in $\cA^{fv}$ for 
    $\fvJ$. 
\end{lemma}

We next turn to a setting with constant $\gamma$ and $\rho$. 
Note that the model in Obizhaeva \& Wang \cite{obizhaeva2013optimal} consists of a single asset with constant price impact and constant resilience. 
Our framework developed in this paper allows to consider  
a multi-asset variant of that model 
where both the 
price impact and the resilience are matrices that do not need to be diagonal, that is, we include possible cross-effects between the assets. 
More specifically, we consider the following 
subsetting 
of \cref{sec:setting}. 

\begin{setting}\label{set:OW}
Let $\xi=0$, $\zeta= 0$, $\sigma= 0$, $\mu= 0$, and $\risk = 0$. 
Hence, we have that $\lambda_j \equiv \lambda_j(0)$ for all $j \in \{1,\ldots,n\}$, and $\gamma=\OO^\top \lam \OO$ is a deterministic matrix in~$\cS_{>0}^n$. 
We choose $\rho \in \R^{n\times n}\backslash\{0\}$ to be a deterministic matrix, too. 
Let \cref{assump_filtration} be in force. 
Furthermore, assume that $d\ne \gamma(0) x$. 
\end{setting}

In \cref{rem:conley} in the appendix we 
provide sufficient conditions which ensure that \cref{assump_convexity} in \cref{set:OW} (i.e., $\kap \in \cS_{>0}^n$) is satisfied, using Conley et al.~\cite[Theorem 2.1]{conley2005elliptic}. 
Observe that in \cref{set:OW} it holds for all $k\in\{1,\ldots,m\}$ that 
$\mathscr{A}\equiv 0$, $\mathscr{B}\equiv -\gamma^{-\frac12} \rho \gamma^{\frac12}$, $\mathscr{C}^k \equiv 0$, $\mathscr{Q}\equiv 0$, and $\KK=\kap$.
By solving the BSDE~\eqref{eq:BSDE} and the SDE~\eqref{eq:SDE_optimal_state}, we obtain from \cref{cor:soln_opt_trade_execution} the optimal strategy in the multi-asset Obizhaeva--Wang model. 

\begin{corollary}\label{cor:soln_OW}
	Assume \cref{set:OW} and that $\kap\in \cS_{>0}^n$ is satisfied. 
	Then,  
	$(\mathscr{Y},\mathscr{Z})$ given by 
	$\mathscr{Z}= 0$ and 
	\begin{equation*}
		\begin{split}
			\mathscr{Y}(s) & = \tfrac12 \bigl( I_n + \tfrac12 (T-s) \mathscr{B} \KK^{-1} \mathscr{B}^\top \bigr)^{-1}, \quad s\in [0,T],
		\end{split}
	\end{equation*}
	is the solution of the BSDE~\eqref{eq:BSDE}, 
	$\theta$ in~\eqref{eq:def_theta} is given by 
	$\theta(s) = -\KK^{-1} \mathscr{B}^\top \mathscr{Y}(s)$, $s\in[0,T]$, 
	and the solution of the SDE~\eqref{eq:SDE_optimal_state} is given by 
	\begin{equation*}
		\begin{split}
			\scH^*(s) 
			& = \scH^*(0) - \tfrac12 \mathscr{B} \KK^{-1} \mathscr{B}^\top
			\bigl(I_n+\tfrac12 T \mathscr{B} \KK^{-1} \mathscr{B}^\top \bigr)^{-1} \scH^*(0) s, \quad s\in[0,T] ,
		\end{split}
	\end{equation*}
	where $\scH^*(0)= \gamma^{-\frac12} d - \gamma^{\frac12} x$.
	Further, there exists a unique optimal strategy $X^* \in \cA^{pm}$ that minimizes 
    $\pmJ$,  
    and it holds that $X^* \in \cA^{fv}$ and 
	\begin{equation}\label{eq:opt_strat_OW}
		\begin{split}
			X^*(0-) & = x, \quad X^*(T) = 0, \\ 
			X^*(s) 
			& = -\tfrac12 \gamma^{-\frac12} \bigl( \KK^{-1} \mathscr{B}^\top
			(I_n+\tfrac12 T \mathscr{B} \KK^{-1} \mathscr{B}^\top )^{-1} 
			+ 2 I_n \bigr)
			\scH^*(0) \\
			& \quad 
			+ \tfrac12 \gamma^{-\frac12} \mathscr{B} \KK^{-1} \mathscr{B}^\top 
			\bigl(I_n+\tfrac12 T \mathscr{B} \KK^{-1} \mathscr{B}^\top \bigr)^{-1} 
			\scH^*(0)  s, \quad s\in[0,T)
			. 
		\end{split}
	\end{equation}
	The deviation process $D^*$ associated to the optimal strategy $X^*$ is given by  
	\begin{equation}\label{eq:opt_dev_OW}
		\begin{split}
			&D^*(0-)=d, 
			\qquad D^*(T)= \gamma^{\frac12} 
			\bigl(I_n+\tfrac12 T \mathscr{B} \KK^{-1} \mathscr{B}^\top \bigr)^{-1} \scH^*(0), \\
			&D^*(s)=-\tfrac12 \gamma^{\frac12} \KK^{-1} \mathscr{B}^\top  \bigl(I_n+\tfrac12 T \mathscr{B} \KK^{-1} \mathscr{B}^\top \bigr)^{-1} \scH^*(0),
			\quad  s\in[0,T).
		\end{split}
	\end{equation}
\end{corollary}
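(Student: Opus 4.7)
The plan is to specialize \cref{cor:soln_opt_trade_execution} to the deterministic subsetting of \cref{set:OW}. First I would verify the hypotheses: since $\risk=0$ we have $\mathscr{Q}\equiv 0 \in\cS^n_{\ge 0}$, while $\KK=\kap$ is constant, symmetric and $\cS^n_{>0}$-valued by assumption, hence bounded below by $\delta I_n$ for some $\delta>0$; \cref{rem:sufficient_cond_convex} then gives \cref{assump_convexity}, and \cref{assump_filtration} is imposed in \cref{set:OW}. Substituting $\mu=0$ and $\sigma=0$ into \eqref{eq:def_Ck}, \eqref{eq:def_A_B}, \eqref{eq:def_kap} and \eqref{eq:def_KK} yields $\mathscr{A}\equiv 0$, $\mathscr{C}^k\equiv 0$, $\mathscr{B}\equiv -\gamma^{-\frac12}\rho\gamma^{\frac12}$ and $\KK=\kap$, so that the driver \eqref{eq:driver_of_BSDE} collapses to $g=-\mathscr{Y}\mathscr{B}\KK^{-1}\mathscr{B}^\top\mathscr{Y}$.

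With this driver, taking $\mathscr{Z}\equiv 0$ reduces \eqref{eq:BSDE} to the deterministic matrix ODE $\mathscr{Y}'(s)=\mathscr{Y}(s)M\mathscr{Y}(s)$ with $\mathscr{Y}(T)=\tfrac12 I_n$, where $M:=\mathscr{B}\KK^{-1}\mathscr{B}^\top\in\cS^n_{\ge 0}$. Since the eigenvalues of $P(s):=I_n+\tfrac12(T-s)M$ are $\ge 1$ for $s\in[0,T]$, the candidate $\mathscr{Y}(s)=\tfrac12 P(s)^{-1}$ is well-defined, symmetric and bounded, and differentiating the inverse gives $\mathscr{Y}'(s)=\tfrac14 P(s)^{-1}MP(s)^{-1}=\mathscr{Y}(s)M\mathscr{Y}(s)$. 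Combined with $\KK+4\sum_k\mathscr{C}^k\mathscr{Y}\mathscr{C}^k=\kap\in\cS^n_{>0}$, this shows that $(\mathscr{Y},\mathscr{Z})$ with $\mathscr{Z}\equiv 0$ solves \eqref{eq:BSDE}; uniqueness is provided by \cref{propo:soln_LQ_Sun}.

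Next, $\mathscr{C}^k\equiv 0$ in \eqref{eq:def_theta} yields $\theta(s)=-\KK^{-1}\mathscr{B}^\top\mathscr{Y}(s)$, and \eqref{eq:SDE_optimal_state} becomes the linear ODE $(\scH^*)'(s)=-M\mathscr{Y}(s)\scH^*(s)$ with $\scH^*(0)=\gamma^{-\frac12}d-\gamma^{\frac12}x$. The main algebraic step, and the only step requiring real care, is the identity
\begin{equation*}
    M\mathscr{Y}(s)(I_n-Ks)=M\mathscr{Y}(0), \qquad K:=\tfrac12 M\bigl(I_n+\tfrac12 TM\bigr)^{-1},
\end{equation*}
which I would verify by multiplying out and using $I_n+\tfrac12 TM = P(s)+\tfrac12 sM$. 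Substituting the affine ansatz $\scH^*(s)=(I_n-Ks)\scH^*(0)$ into the ODE then reduces both sides to $-M\mathscr{Y}(0)\scH^*(0)$, confirming the claimed form of $\scH^*$. The same identity yields the useful consequence $\mathscr{Y}(s)\scH^*(s)=\mathscr{Y}(0)\scH^*(0)$ for all $s\in[0,T]$.

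It then remains to substitute $\theta$ and $\scH^*$ into the feedback formulas \eqref{eq:def_opt_strat} and \eqref{eq:def_opt_dev}. Using $\mathscr{Y}(s)\scH^*(s)=\mathscr{Y}(0)\scH^*(0)$ shows that $\theta(s)\scH^*(s)=-\KK^{-1}\mathscr{B}^\top\mathscr{Y}(0)\scH^*(0)$ is constant in $s\in[0,T)$, which is precisely why $D^*$ is constant on $[0,T)$ and $X^*$ is affine in $s$ on $[0,T)$; a short rearrangement produces \eqref{eq:opt_strat_OW} and \eqref{eq:opt_dev_OW}. The terminal value $D^*(T)=\gamma^{\frac12}(I_n+\tfrac12 TM)^{-1}\scH^*(0)$ follows from $D^{X^*}=\gamma^{\frac12}\scH^{\gamma^{-\frac12}D^{X^*}}+\gamma X^*$ (an immediate rearrangement of \cref{lem:scH_hidden_dev}) evaluated at $T$, using $X^*(T)=0$ together with $\scH^*(T)=(I_n+\tfrac12 TM)^{-1}\scH^*(0)$, which is obtained by simplifying $I_n-\tfrac12 TM(I_n+\tfrac12 TM)^{-1}$. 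The piecewise-affine structure of $X^*$ on $[0,T)$ with two boundary block trades shows $X^*\in\cA^{fv}_0(x,d)$. No step is genuinely difficult; the main obstacle is keeping track of the noncommutativity in the matrix identity above.
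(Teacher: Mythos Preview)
Your proposal is correct and follows precisely the approach the paper indicates: specialize \cref{cor:soln_opt_trade_execution} to \cref{set:OW}, verify the hypotheses via \cref{rem:sufficient_cond_convex}, reduce \eqref{eq:BSDE} and \eqref{eq:SDE_optimal_state} to deterministic ODEs using $\mathscr{A}=\mathscr{C}^k=\mathscr{Q}=0$, and then compute. The paper in fact omits the detailed verification, so your explicit calculations (the Riccati solution via $P(s)^{-1}$, the identity $P(s)^{-1}(I_n-Ks)=(I_n+\tfrac12 TM)^{-1}$, and the use of \cref{lem:scH_hidden_dev} for $D^*(T)$) fill in exactly what is left implicit.
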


From the representation \eqref{eq:opt_strat_OW} of the optimal strategy we conclude that the optimal strategy in the multi-asset Obizhaeva--Wang model can have block trades only at the time points $0$ and $T$. In between, trading in all assets happens with constant rates.
In addition, the price deviation stays constant on $[0,T)$ when trading according to the optimal strategy (cf.~\eqref{eq:opt_dev_OW}). 
We recall that these features are also characteristic for optimal strategies in the classic single-asset Obizhaeva--Wang model \cite{obizhaeva2013optimal}. 
However, it is in general not optimal to simply take the optimal strategies from the individual single-asset models. 
In particular, we show in \cref{ex:crossingzero} that it can be optimal to also trade in an asset where the initial position is~$0$, whereas in the single-asset model the optimal strategy given the initial position~$0$ (and the initial deviation~$0$) is to stay at the position~$0$ (cf., for example, \cref{lem:closing_immed}).

\begin{ex}\label{ex:crossingzero}
	Within \cref{set:OW} suppose that $n=2$, $x_1\ne 0$, $x_2=0$, $d=0$, and 
	\begin{equation*}
		\rho = 
		\begin{pmatrix}
			\rho_1 & \rho_3\\
			\rho_3 & \rho_2
		\end{pmatrix}
		,
	\end{equation*}
	where $\rho_1,\rho_2,\rho_3\in\R$ are chosen in such a way that $\rho \in \cS_{>0}^2$. Moreover, choose $\gamma$ such that $\gamma$ and $\rho$ commute.  
	Then, \cref{cor:soln_OW} and some matrix computations demonstrate that the optimal strategy $X^* \in \cA^{fv}$ is given by $X^*(0-)=x$, $X^*(T)=0$, and 
	\begin{equation}\label{eq:1002ex}
		\begin{split}
			& X^*_1(s) = 
			\frac{\big[(1+(T-s)\rho_1)(2+T\rho_2)-T(T-s)\rho_3^2\big] x_1}{(2+T\rho_1)(2+T\rho_2)-T^2\rho_3^2 } , \\
			& X^*_2(s) = 
			\frac{(T-2s) \rho_3 x_1}{(2+T\rho_1)(2+T\rho_2)-T^2\rho_3^2} 
			, \quad s \in [0,T).
		\end{split}
	\end{equation}
	In particular, if $\rho_3\ne 0$, then it is optimal to also trade in the second asset although one has a non-zero objective only for the first asset. 
    In \cref{sec:appendixcrossresilience} we provide additional discussion of this example as well as a figure of the optimal strategy.  
\end{ex}

In the case of \cref{ex:crossingzero}, the behavior of the optimal strategy to entail trading in an asset with the initial position~$0$ is due to the off-diagonal entries in the resilience~$\rho$. 
We illustrate numerically in \cref{sec:example_risk} and \cref{sec:example_gamma}  
that also off-diagonal entries in~$\risk$ or in the price impact~$\gamma$, respectively, can lead to such effects.
To this end, we go beyond the multi-asset Obizhaeva--Wang model. 
Furthermore, we, in \cref{ex:opt_strat_sigma} in the appendix, present a simulation of the optimal strategy in a situation with stochastic price impact.

\section{Proofs}
\label{sec:proofs}

\subsection{Proofs for \cref{sec:multi_asset_fv}}

\begin{proof}[Proof of \cref{propo:rewritten_costs_and_deviation}]
	Integration by parts,  
	\eqref{eq:defnu}, 
	\eqref{eq:deviationdynmultivariate},
    and the fact that $\nu$ is continuous and of finite variation 
	imply that
	\begin{equation}\label{eq:2355}
		\begin{split}
			d(\nu(s) D^X(s)) 
			& = \nu(s) dD^X(s) + (d\nu(s) )D^X(s) + d[\nu, D^X](s) \\
			& = -\nu(s) \rho(s) D^X(s) ds 
			+ \nu(s) \gamma(s) dX(s) 
			+ \nu(s) \rho(s) D^X(s) ds \\
			& = \nu(s) \gamma(s) dX(s), \quad s\in[0,T].
		\end{split}
	\end{equation}
	We use this and the fact that $\Delta D^X(s) = \gamma(s) \Delta X(s)$, $s \in [0,T]$, to obtain that 
	\begin{equation}\label{eq:costfunctionalpart001}
		\begin{split}
			&\int_{[0,T]} \left( 2(D^X(s-))^\top + (\Delta X(s))^\top \gamma(s) \right) dX(s) \\
			& = \int_{[0,T]} \left( 2(D^X(s-))^\top + (\Delta D^X(s))^\top \right) \gamma^{-1}(s) \nu^{-1}(s) d(\nu(s) D^X(s)) \\ 
			& = \int_{[0,T]} \left( 2 (\mD(s-))^\top + (\Delta \mD(s))^\top \right) \aphi(s) d\mD(s) ,
		\end{split}
	\end{equation}
	where we introduced the abbreviations $\mD=\nu D^X$ and $\aphi=(\nu^{-1})^\top \gamma^{-1} \nu^{-1}$. 
	Note that, since $\mD=\nu D^X$ has finite variation, it holds for all $r \in [0,T]$ that 
	\begin{equation}\label{eq:2325a}
		\begin{split}
			\int_{[0,r]} (\Delta \mD(s))^\top \aphi(s) d\mD(s) 
			& = \sum_{k=1}^n \int_{[0,r]} \Delta \big((\mD(s))^\top \aphi(s) \big)_k d\mD_{k}(s)  \\
			& = \sum_{k=1}^n \big[\big( \mD^\top \aphi \big)_k, \mD_k \big](r)
			= \big[ \mD^\top \aphi , \mD \big](r) .
		\end{split}
	\end{equation}
	Furthermore, 
	integration by parts  
    and using that $\mD=\nu D^X$ has finite variation and $\aphi=(\nu^\top)^{-1}\gamma^{-1} \nu^{-1}$ is continuous shows 
	for all $s \in[0,T]$ that 
	\begin{equation}\label{eq:2325c}
		d\big( (\mD(s))^\top \aphi(s) \big) 
		= \bigl(d(\mD(s))^\top\bigr) \aphi(s) + (\mD(s))^\top d \aphi(s) . 
	\end{equation}
	Combining \eqref{eq:2325a},  
    integration by parts, and \eqref{eq:2325c} yields 
	for all $s \in [0,T]$ that 
	\begin{equation*}
		\begin{split}
			& \big( 2(\mD(s-))^\top + (\Delta \mD(s))^\top \big) \aphi(s) d\mD(s) \\
			& =  
			(\mD(s-))^\top \aphi(s) d\mD(s) 
			+ \bigl(d(\mD(s))^\top\bigr) \aphi(s) \mD(s-)  
			+ d \big[ \mD^\top \aphi , \mD \big](s)
			\\
			& = d\Big( \big((\mD(s))^\top \aphi(s) \big) \mD(s) \Big) - \Big( d\big((\mD(s))^\top \aphi(s)\big) \Big) \mD(s-) 
			+ \big(d(\mD(s))^\top \big) \aphi(s) \mD(s-) \\
			& = d\big( (\mD(s))^\top \aphi(s)  \mD(s) \big) 
			- (\mD(s))^\top (d\aphi(s)) \mD(s-) .
		\end{split}
	\end{equation*}
	This and \eqref{eq:costfunctionalpart001} together  establish~\eqref{eq:costfunctionalpart}.
	
	To obtain~\eqref{eq:rewritten_deviation}, note that~\eqref{eq:2355}, integration by parts, and the facts that $\nu\gamma$ is continuous and $X$ has finite variation show that	
	\begin{equation*}
		\begin{split}
			d(\nu(s) D^X(s) ) & = \nu(s) \gamma(s) dX(s)
			= - \big(d(\nu(s)\gamma(s))\big) X(s) + d(\nu(s)\gamma(s) X(s)) , \quad s\in[0,T]. 
		\end{split}
	\end{equation*}
	This implies for all $r\in[0,T]$ that 
	\begin{equation*}
		\begin{split}
			\nu(r) D^X(r) & = \nu(0-) D^X(0-) - \int_{0}^r (d(\nu(s)\gamma(s))) X(s) \\
			& \quad 
            + \nu(r) \gamma(r) X(r) - \nu(0-) \gamma(0-) X(0-) \\
			& = \nu(r) \gamma(r) X(r) + d - \gamma(0) x - \int_0^r  \big(d(\nu(s)\gamma(s))\big) X(s) . 
		\end{split}
	\end{equation*}
	It follows that $D^X$ satisfies~\eqref{eq:rewritten_deviation}. 
\end{proof}

We next remove $\nu$ from the integral in~\eqref{eq:costfunctionalpart}.

\begin{lemma}\label{lem:rewritten_costs2}
	Suppose that $X=(X(s))_{s\in[0,T]}$ is an $\R^n$-valued c\`adl\`ag finite-variation process with associated process $D^X=(D^X(s))_{s\in[0,T]}$ defined by~\eqref{eq:deviationdynmultivariate}.
	It then holds that 
    \begin{equation}\label{eq:costfunctionalpart2}
		\begin{split}
			&\int_{0}^T (D^X(s))^\top (\nu(s))^\top d\left((\nu^{-1}(s))^\top \gamma^{-1}(s) \nu^{-1}(s) \right) \nu(s) D^X(s) \\
			& = \int_0^T (D^X(s) )^\top \gamma^{-\frac12}(s) \Big( \gamma^{\frac12}(s) (d\gamma^{-1}(s)) \gamma^{\frac12}(s) \\
			& \qquad \qquad \qquad \qquad \qquad - \big(\gamma^{-\frac12}(s) \rho(s) \gamma^{\frac12}(s) + \gamma^{\frac12}(s) (\rho(s))^\top \gamma^{-\frac12}(s) \big) ds  \Big) 
			\gamma^{-\frac12}(s) D^X(s) . 
		\end{split}
	\end{equation}
\end{lemma}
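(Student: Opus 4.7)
The plan is to compute the differential $d\bigl((\nu^{-1})^\top \gamma^{-1} \nu^{-1}\bigr)$ directly by iterating the integration-by-parts formula, and then to sandwich the result between $\nu^\top$ and $\nu$ (as it appears on the left-hand side of \eqref{eq:costfunctionalpart2}), so that the $\nu^{-1}$-factors telescope. The crucial simplification is that, since $\rho$ is bounded, the process $\nu^{-1}$ from \eqref{eq:eqnuinv} is continuous and of finite variation, so its quadratic covariation with any continuous semimartingale vanishes.

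First I would compute, using \eqref{eq:eqnuinv}, that $d((\nu^{-1})^\top) = -(\nu^{-1})^\top \rho^\top \, ds$. Since $\gamma^{-1}$ is a continuous semimartingale and $\nu^{-1}$ is continuous of finite variation, integration by parts gives
\begin{equation*}
d(\gamma^{-1}\nu^{-1}) = (d\gamma^{-1})\nu^{-1} + \gamma^{-1}(d\nu^{-1}) = (d\gamma^{-1})\nu^{-1} - \gamma^{-1}\rho \nu^{-1}\, ds,
\end{equation*}
and then a second application of integration by parts, with the covariation term again vanishing because $(\nu^{-1})^\top$ is continuous of finite variation, yields
\begin{equation*}
d\bigl((\nu^{-1})^\top \gamma^{-1} \nu^{-1}\bigr) = -(\nu^{-1})^\top \rho^\top \gamma^{-1} \nu^{-1}\, ds + (\nu^{-1})^\top (d\gamma^{-1}) \nu^{-1} - (\nu^{-1})^\top \gamma^{-1} \rho \nu^{-1}\, ds.
\end{equation*}

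Multiplying from the left by $\nu^\top$ and from the right by $\nu$ and using $\nu^{-1}\nu = I_n = \nu^\top (\nu^{-1})^\top$, all $\nu$-factors cancel and we obtain
\begin{equation*}
\nu^\top d\bigl((\nu^{-1})^\top \gamma^{-1} \nu^{-1}\bigr) \nu = d\gamma^{-1} - \rho^\top \gamma^{-1} - \gamma^{-1} \rho.
\end{equation*}
The final step is purely algebraic: inserting $I_n = \gamma^{1/2}\gamma^{-1/2}$ on each side, the right-hand side rewrites as
\begin{equation*}
\gamma^{-1/2}\Bigl( \gamma^{1/2}(d\gamma^{-1})\gamma^{1/2} - \gamma^{-1/2}\rho \gamma^{1/2} - \gamma^{1/2}\rho^\top \gamma^{-1/2} \Bigr)\gamma^{-1/2},
\end{equation*}
and sandwiching between $(D^X(s))^\top$ and $D^X(s)$ and integrating from $t$ to $T$ produces \eqref{eq:costfunctionalpart2}.

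There is no genuine obstacle here; the argument is a careful bookkeeping of Itô product rules, and the only substantive input is that boundedness of $\rho$ renders $\nu$ and $\nu^{-1}$ continuous finite-variation processes so that all covariation brackets with continuous semimartingales drop out. The only delicate point is keeping track of transposes and of the order of matrix multiplication when symmetrizing the final expression.
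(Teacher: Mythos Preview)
Your proof is correct and follows essentially the same approach as the paper: both compute $d\bigl((\nu^{-1})^\top\gamma^{-1}\nu^{-1}\bigr)$ by iterated integration by parts, exploit that $\nu^{-1}$ is continuous of finite variation so all covariation brackets vanish, then cancel the $\nu$-factors and factor out $\gamma^{-1/2}$ on each side. The only cosmetic difference is the order in which you expand the product, and you should attach the $ds$ to the $\rho$-terms in the displayed identity for $\nu^\top d(\cdots)\nu$ to keep the differential notation consistent.
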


\begin{proof}
    This follows from computing the dynamics of $(\nu^{-1})^\top \gamma^{-1} \nu^{-1}$ by integration by parts, using the dynamics~\eqref{eq:eqnuinv} of $\nu^{-1}$ and that $\nu^{-1}$ is continuous and of finite variation.
\end{proof}

The following simple lemma, which is an application of It\^o's lemma, provides the dynamics for powers of $\lambda_j$, $j\in\{1,\ldots,n\}$. 
In particular, we further obtain the dynamics of $\lam^{-1}$, $\lam^{\frac12}$, and $\lam^{-\frac12}$, which can be used to compute expressions such as $\gamma^{\frac12} (d\gamma^{-1}) \gamma^{\frac12}$ in the integral for the costs.

\begin{lemma}\label{lemma:ito_to_lambda}
	For all $\alpha \in \R$, $j\in\{1,\ldots,n\}$, $s\in[0,T]$ it holds that  
	\begin{equation}\label{eq:lambda_alpha_dyn}
		\begin{split}
			d(\lambda_j(s))^{\alpha}
			& = \alpha (\lambda_j(s))^{\alpha} 
			\biggl(  
			\Bigl(
			\mu_j(s) + \frac12 (\alpha-1) \sum_{k=1}^m (\sigma_{j,k}(s))^2  
			\Bigr) ds 
			+ \sum_{k=1}^m \sigma_{j,k}(s) dW_k(s) 
			\biggr)
			.
		\end{split}
	\end{equation}
	In particular, it holds for 
	all $s\in[0,T]$ that  
	\begin{equation}\label{eq:funct_of_lambda_dyn}
		\begin{split}
			d\lam^{-1}(s) 
			& = \lam^{-1}(s) \bigg( \Big( -\diagmu(s) + \sum_{k=1}^m \diagsigma_{k}(s)  (\diagsigma_{k}(s) )^\top  \Big) ds - \sum_{k=1}^m \diagsigma_{k}(s) dW_k(s) \bigg), \\
			d \lam^{\frac12}(s) 
			& = \lam^{\frac12}(s) \bigg( \Big( \frac12 \diagmu(s) - \frac18 \sum_{k=1}^m \diagsigma_{k}(s)  (\diagsigma_{k}(s) )^\top  \Big) ds + \frac12 \sum_{k=1}^m \diagsigma_{k}(s)   dW_k(s) \bigg), \\
			d \lam^{-\frac12}(s)
			& = \lam^{-\frac12}(s) \bigg( \Big( -\frac12 \diagmu(s) + \frac38 \sum_{k=1}^m \diagsigma_{k}(s)  (\diagsigma_{k}(s) )^\top  \Big) ds - \frac12 \sum_{k=1}^m \diagsigma_{k}(s)  dW_k(s) \bigg) .
		\end{split}
	\end{equation}
\end{lemma}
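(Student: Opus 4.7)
The plan is to apply Itô's formula to the deterministic $C^{2}$ function $f(x) = x^{\alpha}$ on $(0,\infty)$ together with the SDE~\eqref{eq:dyn_lambda} for $\lambda_j$. First I would record that, by the independence of $W_1,\ldots,W_m$ and~\eqref{eq:dyn_lambda},
\begin{equation*}
d[\lambda_j,\lambda_j](s) \;=\; (\lambda_j(s))^{2} \sum_{k=1}^{m} (\sigma_{j,k}(s))^{2}\, ds.
\end{equation*}
Inserting $f'(x) = \alpha x^{\alpha-1}$ and $f''(x) = \alpha(\alpha-1) x^{\alpha-2}$ into Itô's formula gives
\begin{equation*}
d(\lambda_j(s))^{\alpha} \;=\; \alpha (\lambda_j(s))^{\alpha-1}\, d\lambda_j(s) \;+\; \tfrac{1}{2}\alpha(\alpha-1)(\lambda_j(s))^{\alpha-2}\, d[\lambda_j,\lambda_j](s),
\end{equation*}
and factoring out $\alpha(\lambda_j(s))^{\alpha}$ immediately yields the claimed formula~\eqref{eq:lambda_alpha_dyn}. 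This step is strict positivity of $\lambda_j$ (ensured by~\eqref{eq:dyn_lambda} with $\lambda_j(0)\in(0,\infty)$) so that $f$ is $C^{2}$ along the trajectory.

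To derive \eqref{eq:funct_of_lambda_dyn}, I would specialize \eqref{eq:lambda_alpha_dyn} to $\alpha\in\{-1,\tfrac12,-\tfrac12\}$ and assemble the resulting scalar identities into matrix form. Since $\lam^{\alpha}$, $\diagmu$, and the $\diagsigma_k$ are all diagonal (cf.~\eqref{eq:def_lam} and the definitions in \Cref{sec:setting}), they pairwise commute, and for each $j$ the diagonal entry of $\sum_{k=1}^{m}\diagsigma_k \diagsigma_k^{\top}$ is $\sum_{k=1}^{m}(\sigma_{j,k}(s))^{2}$. Plugging $\alpha=-1$ gives the drift coefficient $-\diagmu + \sum_k \diagsigma_k\diagsigma_k^{\top}$ and the diffusion coefficient $-\diagsigma_k$; for $\alpha=\tfrac12$ and $\alpha=-\tfrac12$ the arithmetic $\tfrac{\alpha(\alpha-1)}{2}\in\{-\tfrac18,\tfrac38\}$ produces the stated factors $-\tfrac18$ and $\tfrac38$ in front of $\sum_k \diagsigma_k\diagsigma_k^{\top}$, and the diffusion factors $\tfrac12$ and $-\tfrac12$ on $\diagsigma_k$.

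There is no genuine obstacle: the statement is a direct Itô calculation for a geometric-type SDE, and the only bookkeeping needed is to verify the three scalar prefactors $\tfrac12(\alpha-1)\alpha$ for $\alpha\in\{-1,\tfrac12,-\tfrac12\}$ and to notice that, because all involved diagonal matrices commute, the scalar identity lifts to the matrix identity in \eqref{eq:funct_of_lambda_dyn} without any ordering issues.
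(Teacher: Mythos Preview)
Your proposal is correct and follows essentially the same route as the paper: apply It\^o's formula to $f(x)=x^{\alpha}$ on $(0,\infty)$ using the dynamics~\eqref{eq:dyn_lambda}, factor out $\alpha(\lambda_j(s))^{\alpha}$ to obtain~\eqref{eq:lambda_alpha_dyn}, and then specialize $\alpha\in\{-1,\tfrac12,-\tfrac12\}$ and read off the diagonal-matrix identities in~\eqref{eq:funct_of_lambda_dyn}. The paper's proof is a line-by-line match, only writing It\^o's formula in integral form rather than differential form.
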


\begin{proof}
    It\^o's Lemma applied for all $\alpha \in\mathbb{R}$ to the function $f\colon (0,\infty) \to \mathbb{R}$, $f(x)=x^{\alpha}$, and using~\eqref{eq:dyn_lambda} yields~\eqref{eq:lambda_alpha_dyn}. By inserting $-1$, $\frac12$, and $-\frac12$, respectively, for $\alpha$ into~\eqref{eq:lambda_alpha_dyn} and using~\eqref{eq:def_lam}, we obtain \eqref{eq:funct_of_lambda_dyn}.
\end{proof}

Further, we obtain the following corollary of \cref{lemma:ito_to_lambda}, which we state without proof.

\begin{corollary}\label{cor:dlam_lam}
	It holds for all $s\in[0,T]$ that 
	\begin{equation*}
		\begin{split}
			\big(d\lam^{\frac12}(s)\big) \lam^{-\frac12}(s) 
			& = \bigg(\frac12 \diagmu(s)  - \frac18 \sum_{k=1}^m \diagsigma_{k}(s)  (\diagsigma_{k}(s) )^\top  \bigg) ds  + \frac12 \sum_{k=1}^m \diagsigma_{k}(s) dW_k(s) , \\
			\big(d\lam^{-\frac12}(s)\big) \lam^{\frac12}(s) 
			& = 
			\bigg( -\frac12 \diagmu(s) + \frac38 \sum_{k=1}^m \diagsigma_{k}(s)  (\diagsigma_{k}(s) )^\top  \bigg) ds - \frac12 \sum_{k=1}^m \diagsigma_{k}(s) dW_k(s) ,\\
			\lam^{\frac12}(s) \big( d \lam^{-1}(s) \big) \lam^{\frac12}(s) 
			& = \bigg( \Big( - \diagmu(s) + \sum_{k=1}^m \diagsigma_{k}(s)  (\diagsigma_{k}(s) )^\top  \Big) ds - \sum_{k=1}^m \diagsigma_{k}(s) dW_k(s) \bigg) . 
		\end{split}
	\end{equation*}
\end{corollary}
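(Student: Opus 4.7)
The plan is to derive each identity directly from the dynamics recorded in \cref{lemma:ito_to_lambda}, exploiting the fact that every matrix in sight ($\lam^{\alpha}$, $\diagmu$, $\diagsigma_k$ and all their products) is diagonal and therefore commutes. Since the statement only asks to compute $(d\lam^{1/2})\lam^{-1/2}$, $(d\lam^{-1/2})\lam^{1/2}$, and $\lam^{1/2}(d\lam^{-1})\lam^{1/2}$, no stochastic analysis beyond what is already contained in the preceding lemma is required.

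For the first identity, I would take the expression for $d\lam^{1/2}(s)$ in \eqref{eq:funct_of_lambda_dyn} and multiply it on the right by $\lam^{-1/2}(s)$. Because $\lam^{1/2}(s)$ commutes with the diagonal matrices $\diagmu(s)$ and $\diagsigma_k(s)$, one can slide $\lam^{1/2}(s)$ past the bracketed drift and diffusion coefficients and then combine it with $\lam^{-1/2}(s)$ using $\lam^{1/2}(s)\lam^{-1/2}(s)=I_n$. This immediately yields the claimed formula. The second identity is obtained in exactly the same way, starting from the expression for $d\lam^{-1/2}(s)$ and multiplying on the right by $\lam^{1/2}(s)$.

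For the third identity I would start from the expression for $d\lam^{-1}(s)$, multiply it on the left and on the right by $\lam^{1/2}(s)$, and again use commutativity of diagonal matrices to move the two $\lam^{1/2}(s)$ factors next to $\lam^{-1}(s)$, so that $\lam^{1/2}(s)\lam^{-1}(s)\lam^{1/2}(s)=I_n$ collapses everything to the drift and diffusion coefficients on the right-hand side of the claim.

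There is no real obstacle here: the proof reduces to bookkeeping with diagonal matrices, and the most delicate point is simply making sure that in the stochastic integral term one correctly handles the ordering of $\lam^{\pm 1/2}(s)$ with respect to the Itô integrators $dW_k(s)$, which is harmless since $\lam^{\pm 1/2}(s)$ is adapted and the factors commute with $\diagsigma_k(s)$.
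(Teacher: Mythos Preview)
Your proposal is correct and matches the paper's treatment: the paper states this result as an immediate corollary of \cref{lemma:ito_to_lambda} without proof, and your argument---multiplying the dynamics in \eqref{eq:funct_of_lambda_dyn} by the appropriate power of $\lam$ and using that all matrices involved are diagonal and hence commute---is exactly the intended derivation.
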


We now use the preceding results to prove \cref{lem:rewritten_costs3}.

\begin{proof}[Proof of \cref{lem:rewritten_costs3}]
	Observe that \Cref{propo:rewritten_costs_and_deviation} and \Cref{lem:rewritten_costs2} show that 
	\begin{equation}\label{eq:2224a}
		\begin{split}
			C(X)
			& = \frac12 (D^X(T))^\top \gamma^{-1}(T) D^X(T)  
			- \frac12 d^\top \gamma^{-1}(0) d \\
			& \quad - \frac12 \int_0^T ( D^X(s))^\top \gamma^{-\frac12}(s) \Big( \gamma^{\frac12}(s) (d\gamma^{-1}(s))\gamma^{\frac12}(s) \\
			& \qquad \qquad - \big(\gamma^{-\frac12}(s) \rho(s) \gamma^{\frac12}(s) + \gamma^{\frac12}(s) (\rho(s))^\top \gamma^{-\frac12}(s) \big) ds \Big) 
			\gamma^{-\frac12}(s) D^X(s) .
		\end{split}
	\end{equation}
	Moreover, note that by using~\cref{cor:dlam_lam} 
	we obtain for all $s\in[0,T]$ that 
	\begin{align}\label{eq:2224b}
			\gamma^{\frac12}(s) (d\gamma^{-1}(s)) \gamma^{\frac12}(s) 
			& = \OO^\top \lam^{\frac12}(s) \Big( d \lam^{-1}(s) \Big) \lam^{\frac12}(s) \OO \\
			& = \OO^\top \Big( - \diagmu(s) + \sum_{k=1}^m \diagsigma_{k}(s) \diagsigma_{k}(s) \Big) \OO ds 
			- \sum_{k=1}^m \OO^\top \diagsigma_{k}(s) \OO  dW_k(s) . \nonumber
	\end{align}
	Combining this, \eqref{eq:2224a}, and \eqref{eq:def_kap} 
	demonstrates that 
	\begin{equation}\label{eq:1607x}
		\begin{split}
			C(X)
			& = \frac12 (D^X(T))^\top \gamma^{-1}(T) D^X(T)  
			- \frac12 d^\top \gamma^{-1}(0) d \\
			& \quad + \int_0^T ( D^X(s))^\top \gamma^{-\frac12}(s) 
			\kap(s) 
			\gamma^{-\frac12}(s) D^X(s) ds \\
			& \quad + \frac12 \sum_{k=1}^m \int_0^T ( D^X(s))^\top \gamma^{-\frac12}(s) \OO^\top \diagsigma_{k}(s) \OO \gamma^{-\frac12}(s) D^X(s)  dW_k(s) .
		\end{split}
	\end{equation}
	The Burkholder--Davis--Gundy inequality and~\eqref{eq:A2} imply that there exists $c_1\in(0,\infty)$ such that for all $k\in\{1,\ldots,m\}$ it holds that 
	\begin{equation}\label{eq:1544x}
		\begin{split}
			& E\bigg[ \sup_{r \in [0,T]} \bigg\lvert \int_0^r (D^X(s) )^\top \gamma^{-\frac12}(s)  \OO^\top \diagsigma_{k}(s) \OO \gamma^{-\frac12}(s) D^X(s) dW_k(s) \bigg\rvert \bigg] \\
			& \le c_1 
			E\bigg[ \bigg( \int_0^T \Big\lvert (\gamma^{-\frac12}(s) D^X(s))^\top \OO^\top \diagsigma_{k}(s) \OO \gamma^{-\frac12}(s) D^X(s) \Big\rvert^2 ds \bigg)^\frac12 \bigg] 
			< \infty .
		\end{split}
	\end{equation}
	This proves that 
	\begin{equation}\label{eq:2224c}
		\sum_{k=1}^m E\bigg[ \int_0^T ( D^X(s))^\top \gamma^{-\frac12}(s)  \OO^\top \diagsigma_{k}(s) \OO \gamma^{-\frac12}(s) D^X(s) dW_k(s) \bigg] = 0 .
	\end{equation}
	Since $\kap$ is 
	$dP\times ds$-a.e.\ bounded (see~\cref{rem:remark_on_setting_assumptions2}), 
	we obtain from Jensen's inequality and~\eqref{eq:A1} that there exists $c_2\in(0,\infty)$ such that
	\begin{equation}\label{eq:2224d}
		\begin{split}
			E\bigg[ \bigg\lvert  \int_0^T ( D^X(s))^\top \gamma^{-\frac12}(s) \kap(s) 
			\gamma^{-\frac12}(s) D^X(s)  ds \bigg\rvert \bigg] 
			& \le c_2  E\bigg[  \int_0^T \lVert \gamma^{-\frac12}(s) D^X(s) \rVert_F^2 ds \bigg]
			< \infty .
		\end{split}
	\end{equation}
	Moreover, since $\gamma^{-1}$ is 
	$\cS_{\ge 0}^n$-valued, we have
	$(D^X(T))^\top \gamma^{-1}(T) D^X(T) \ge 0.$ 
	This, the fact that $\gamma^{-1}(0)$ is 
    deterministic, 
    \eqref{eq:1607x}, \eqref{eq:1544x}, \eqref{eq:2224c}, and \eqref{eq:2224d} 
	show that $E[C(X)]$ is well defined and admits the representation~\eqref{eq:fvJ_rewritten}.
\end{proof}

\subsection{Proofs for \cref{sec:cont_ext_to_pm}}

\begin{proof}[Proof of \cref{lem:deviationcompatibility}]
    Since $D^X$ here is a semimartingale, we have that 
    integration by parts, \eqref{eq:defnu}, \eqref{eq:eqnuinv}, and \eqref{eq:def_deviation_pm} show for all $r\in[0,T]$ that 
    \begin{equation*}
	\begin{split}
		dD^X(r) & = (d\gamma(r)) X(r) + \gamma(r) dX(r) + (d\nu^{-1}(r)) \bigg( d - \gamma(0) x - \int_0^r \big(d(\nu(s)\gamma(s))\big) X(s) \bigg) \\
		& \quad - \nu^{-1}(r) (d\nu(r)) \gamma(r) X(r) 
		- (d\gamma(r)) X(r)   \\
		& = \gamma(r) dX(r) 
		- \rho(r) \nu^{-1}(r) \bigg( d - \gamma(0) x - \int_0^r \big(d(\nu(s)\gamma(s))\big) X(s) \bigg) dr \\
		& \quad - \rho(r) \gamma(r) X(r) dr \\
		& = \gamma(r) dX(r) - \rho(r) D^X(r) dr.
	\end{split}
    \end{equation*}
    Furthermore, note that
    $D^X(0-)=\gamma(0) X(0-) + \nu^{-1}(0) (d-\gamma(0) x)=d$.
\end{proof}

We continue with a lemma wherein we compute the dynamics of 
$\gamma^{-\frac12}\nu^{-1}\beta$ for a certain process $\beta$. 
In the proofs of \cref{lem:scH_hidden_dev} and \cref{lem:bijective} we use this to show that $\scH^{\gamma^{-\frac12} D^X} = \gamma^{-\frac12} \nu^{-1} \beta = \gamma^{-\frac12} D^X - \gamma^{\frac12} X$.

\begin{lemma}\label{lem:computations_for_SDE}
	Assume that $X=(X(s))_{s\in[0,T]}$ is an $\R^{n}$-valued progressively measurable process such that 
	$\int_0^T \lVert X(s) \rVert_F^2 ds < \infty$ a.s.\
	and let 
    \begin{equation}\label{eq:defbeta}
        \beta(s) = d-\gamma(0) x - \int_0^s (d(\nu(r)\gamma(r))) X(r), \quad s\in[0,T].
    \end{equation}
	It then holds for all $s\in[0,T]$ that 
	\begin{equation*}
		\begin{split}
			d\big(\gamma^{-\frac12}(s) \nu^{-1}(s) \beta(s)\big) 
			& = - \bigg( \gamma^{-\frac12}(s) \rho(s) \gamma^{\frac12}(s) ds 
			+ \OO^\top \Big( \diagmu(s) - \frac12 \sum_{k=1}^m \diagsigma_{k}(s) \diagsigma_{k}(s) \Big) \OO ds \\
			& \qquad \quad + \sum_{k=1}^m \OO^\top \diagsigma_{k}(s) \OO dW_k(s) 
			\bigg) \gamma^{\frac12}(s) X(s) \\
			& \quad - \bigg( \gamma^{-\frac12}(s) \rho(s) \gamma^{\frac12}(s) ds 
			+ \OO^\top \Big( \frac12 \diagmu(s) - \frac38 \sum_{k=1}^m \diagsigma_{k}(s) \diagsigma_{k}(s) \Big) \OO ds \\
			& \qquad \quad + \frac12 \sum_{k=1}^m \OO^\top \diagsigma_{k}(s) \OO dW_k(s)
			\bigg)
			\big(\gamma^{-\frac12}(s)\nu^{-1}(s)\beta(s)\big) ,\\
			\gamma^{-\frac12}(0)\nu^{-1}(0)\beta(0) & = \gamma^{-\frac12}(0) d - \gamma^{\frac12}(0) x .
		\end{split}
	\end{equation*}
\end{lemma}

\begin{proof}
    Integration by parts, \eqref{eq:eqnuinv}, the fact that~$\nu$,~$\nu^{-1}$, and~$\gamma$ are continuous, the fact that~$\nu^{-1}$ and~$\nu$ are of finite variation, \eqref{eq:defnu}, and~\eqref{eq:def_deviation_pm} 
    imply for all $s \in [0,T]$ that 
	\begin{equation*}
		\begin{split}
			d(\nu^{-1}\beta)(s) & = \nu^{-1}(s) d\beta(s) + (d\nu^{-1}(s)) \beta(s) + d[\nu^{-1},\beta](s) \\
			& =  - \nu^{-1}(s) \big(d(\nu(s)\gamma(s))\big) X(s) - \rho(s) \nu^{-1}(s) \beta(s) ds \\
			& = - \nu^{-1}(s) \nu(s) (d\gamma(s)) X(s) 
			- \nu^{-1}(s) (d\nu(s)) \gamma(s) X(s) \\
			& \quad - \nu^{-1}(s) \big(d[\nu,\gamma](s)\big) X(s)
			- \rho(s) \nu^{-1}(s) \beta(s) ds \\
			& = - (d\gamma(s)) X(s) 
			- \nu^{-1}(s) \nu(s) \rho(s) \gamma(s) X(s) ds 
			- \rho(s) \nu^{-1}(s) \beta(s) ds \\
			& = - (d\gamma(s)) X(s) 
			- \rho(s) \big(\gamma(s) X(s) + \nu^{-1}(s) \beta(s)\big) ds .
		\end{split}
	\end{equation*}
	It follows by integration by parts for all $s\in[0,T]$ that 
	\begin{equation}\label{eq:1328a}
		\begin{split}
			& d\big(\gamma^{-\frac12}(s) \nu^{-1}(s) \beta(s) \big) \\
			& = \gamma^{-\frac12}(s) d \big(\nu^{-1}(s) \beta(s)\big) 
			+ \big(d\gamma^{-\frac12}(s)\big) \nu^{-1}(s) \beta(s) + d[\gamma^{-\frac12},\nu^{-1}\beta](s) \\
			& = -\gamma^{-\frac12}(s) (d\gamma(s)) X(s) - \gamma^{-\frac12}(s) \rho(s) \big(\gamma(s) X(s) + \nu^{-1}(s) \beta(s)\big) ds \\
			& \quad + \big(d\gamma^{-\frac12}(s)\big) \nu^{-1}(s) \beta(s) 
			- \big(d[\gamma^{-\frac12},\gamma](s)\big) X(s) .
		\end{split}
	\end{equation}
	Furthermore, it holds by integration by parts for all $s\in [0,T]$ that 
	\begin{equation}\label{eq:1328b}
		\begin{split}
			\big(d[\gamma^{-\frac12},\gamma](s)\big) X(s) 
			& = \big(d(\gamma^{-\frac12}(s) \gamma(s))\big)  X(s) - \gamma^{-\frac12}(s) (d\gamma(s))  X(s)  
            - \big(d\gamma^{-\frac12}(s)\big) \gamma(s)  X(s).
		\end{split}
	\end{equation}
	From~\eqref{eq:1328a} and~\eqref{eq:1328b} we obtain for all $s\in[0,T]$ that 
	\begin{equation}\label{eq:intermedresultcompbeta}
		\begin{split}
			& d\big(\gamma^{-\frac12}(s) \nu^{-1}(s) \beta(s)\big) \\ 
			& =  \big(- \gamma^{-\frac12}(s) \rho(s) ds + d\gamma^{-\frac12}(s) \big) 
			\big(\gamma(s) X(s) + \nu^{-1}(s) \beta(s)\big) 
			- \big(d\gamma^{\frac12}(s) \big)  X(s) \\
			& = 
			\Big(- \gamma^{-\frac12}(s) \rho(s) \gamma^{\frac12}(s) ds  + \big(d\gamma^{-\frac12}(s)\big) \gamma^{\frac12}(s) - \big(d\gamma^{\frac12}(s) \big) \gamma^{-\frac12}(s) \Big) \gamma^{\frac12}(s) X(s) \\
			& \quad +  \Big(- \gamma^{-\frac12}(s) \rho(s) \gamma^{\frac12}(s) ds + \big(d\gamma^{-\frac12}(s)\big) 
			\gamma^{\frac12}(s) \Big) 
			\gamma^{-\frac12}(s) \nu^{-1}(s) \beta(s)
			.
		\end{split}
	\end{equation}
	We have from \cref{cor:dlam_lam} 
    for all $s\in[0,T]$ that 
    \begin{align}\label{eq:1448b}
			\big(d\gamma^{-\frac12}(s)\big) \gamma^{\frac12}(s)
			& = \OO^\top \big(d\lam^{-\frac12}(s)\big) \lam^{\frac12}(s) \OO \\
			& = \OO^\top 
			\bigg(\! -\frac12 \diagmu(s) + \frac38 \sum_{k=1}^m \diagsigma_{k}(s) \diagsigma_{k}(s) \bigg) \OO ds 
			- \frac12 \sum_{k=1}^m \OO^\top \diagsigma_{k}(s) \OO dW_k(s) \nonumber
	\end{align}
    and 
	\begin{equation}\label{eq:1448c}
		\begin{split}
			\big(d\gamma^{\frac12}(s)\big) \gamma^{-\frac12}(s) 
			& = \OO^\top  \bigg(\frac12 \diagmu(s)  - \frac18 \sum_{k=1}^m \diagsigma_{k}(s) \diagsigma_k(s) \bigg) \OO ds  
			+ \frac12 \sum_{k=1}^m \OO^\top \diagsigma_{k}(s)  \OO  dW_k(s) .
		\end{split}
	\end{equation} 
	The claim follows from combining 
    \eqref{eq:intermedresultcompbeta}, 
	\eqref{eq:1448b}, and \eqref{eq:1448c}.
\end{proof}

\begin{proof}[Proof of \cref{lem:scH_hidden_dev}]
    Let 
    $\beta=(\beta(s))_{s\in[0,T]}$ be defined by \eqref{eq:defbeta} 
    and observe 
	that 
	\begin{equation}\label{eq:1633}
		\gamma^{\frac12}X=\gamma^{-\frac12} D^X - \gamma^{-\frac12} \nu^{-1} \beta.  
	\end{equation} 
    Replacing $\gamma^{\frac12}X$ in \Cref{lem:computations_for_SDE} by the right-hand side of \eqref{eq:1633} 
    and combining the result 
    with \eqref{eq:def_scH_u} proves that 
    \begin{equation*}
        \begin{split}
            & d\Big(\scH^{\gamma^{-\frac12} D^X}(s) - \gamma^{-\frac12}(s) \nu^{-1}(s) \beta(s) \Big) \\
            & = 
			\bigg( \OO^\top \Big( \frac12 \diagmu(s) - \frac18 \sum_{k=1}^m \diagsigma_{k}(s) \diagsigma_k(s) \Big) \OO ds 
            + \frac12 \sum_{k=1}^m \OO^\top \diagsigma_{k}(s) \OO dW_k(s)
			\bigg) \\ 
			& \quad \cdot \Big(\scH^{\gamma^{-\frac12} D^X}(s) - \gamma^{-\frac12}(s) \nu^{-1}(s) \beta(s)\Big), \quad s\in[0,T] ,
        \end{split}
    \end{equation*}
    and $\scH^{\gamma^{-\frac12} D^X}(0) - \gamma^{-\frac12}(0) \nu^{-1}(0) \beta(0) = 0$. 
    Since this is a linear SDE with bounded coefficients and start in $0$, 
    it follows that the process $\scH^{\gamma^{-\frac12} D^X} - \gamma^{-\frac12} \nu^{-1} \beta$
    is indistinguishable from the zero process (cf., for example, \cite[Theorem~3.17]{pardouxBook}).
	This and~\eqref{eq:1633} yield that 
	\begin{equation*}
		\scH^{\gamma^{-\frac12} D^X} = \gamma^{-\frac12} \nu^{-1} \beta = \gamma^{-\frac12} D^X - \gamma^{\frac12} X
		.
	\end{equation*}
	This completes the proof. 
\end{proof}

In the following lemma we clarify that the risk term 
is finite. 

\begin{lemma}\label{lem:risk_term_finite_expectation}
	Let 
    $X \in \cA^{pm}$. It then holds that 
	\begin{equation*}
		E\bigg[ \Big\lvert \int_0^T (X(s)-\zeta(s))^\top \risk(s) (X(s)-\zeta(s)) ds \Big\rvert \bigg] < \infty.
	\end{equation*}
\end{lemma}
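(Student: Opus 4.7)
The plan is to bound the integrand pathwise by a constant multiple of $\lVert \gamma^{\frac12}(X-\zeta) \rVert_F^2$, and then establish that the latter is integrable in expectation over $[t,T]\times\Omega$.

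First, I would write $\risk = \gamma^{\frac12} \mathscr{Q} \gamma^{\frac12}$, which is valid by the definition \eqref{eq:def_Q}. Using the submultiplicativity of the Frobenius norm together with $dP\times ds|_{[0,T]}$-a.e.\ boundedness of $\mathscr{Q}$ (say by a constant $c_1\in(0,\infty)$), one obtains the pathwise estimate
\begin{equation*}
\bigl\lvert (X(s)-\zeta(s))^\top \risk(s) (X(s)-\zeta(s)) \bigr\rvert
\le c_1 \lVert \gamma^{\frac12}(s)(X(s)-\zeta(s))\rVert_F^2
\end{equation*}
for $dP\times ds|_{[0,T]}$-almost all $(\omega,s)$. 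By the triangle inequality, $\lVert \gamma^{\frac12}(X-\zeta)\rVert_F^2 \le 2\lVert \gamma^{\frac12} X\rVert_F^2 + 2\lVert \gamma^{\frac12}\zeta\rVert_F^2$, so it suffices to establish $L^1(dP\times ds|_{[t,T]})$-integrability of each of these two summands.

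For the $\zeta$-term, this is immediate from the integrability assumption \eqref{eq:int_cond_zeta}, namely $\lVert \gamma^{\frac12}\zeta\rVert_{\cL_0^2}<\infty$. For the $X$-term, the key identity is \cref{lem:scH_hidden_dev}, which yields
\begin{equation*}
\gamma^{\frac12}(s) X(s) = \gamma^{-\frac12}(s) D^X(s) - \scH^{\gamma^{-\frac12} D^X}(s), \quad s \in [t,T],
\end{equation*}
and hence $\lVert \gamma^{\frac12} X\rVert_F^2 \le 2\lVert \gamma^{-\frac12} D^X\rVert_F^2 + 2\lVert \scH^{\gamma^{-\frac12} D^X}\rVert_F^2$. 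The first summand on the right is $L^1(dP\times ds|_{[t,T]})$-integrable by the admissibility condition \eqref{eq:A1}. For the second, admissibility \eqref{eq:A1} gives $\gamma^{-\frac12}D^X\in\cL_t^2$, so \cref{rem:SDE_hidden_dev_and_new_state} applies and yields $E[\sup_{s\in[t,T]} \lVert\scH^{\gamma^{-\frac12} D^X}(s)\rVert_F^2]<\infty$; integrating in time over $[t,T]$ preserves finiteness.

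Combining these bounds with Fubini's theorem gives $E[\int_t^T \lvert (X(s)-\zeta(s))^\top \risk(s) (X(s)-\zeta(s))\rvert ds] < \infty$, as desired. No step poses a real obstacle here; the only subtlety is recognizing that the control over $\gamma^{\frac12} X$ (which is not directly part of the admissibility conditions) is available through the identity of \cref{lem:scH_hidden_dev} together with the standard moment estimate for the linear SDE \eqref{eq:def_scH_u}.
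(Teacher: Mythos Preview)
Your proof is correct and follows essentially the same approach as the paper: both arguments rewrite the quadratic form via $\risk=\gamma^{\frac12}\mathscr{Q}\gamma^{\frac12}$, use boundedness of $\mathscr{Q}$ to reduce to controlling $\lVert\gamma^{\frac12}(X-\zeta)\rVert_F^2$, invoke \cref{lem:scH_hidden_dev} to express $\gamma^{\frac12}X$ in terms of $\gamma^{-\frac12}D^X$ and $\scH^{\gamma^{-\frac12}D^X}$, and then appeal to \eqref{eq:A1}, \cref{rem:SDE_hidden_dev_and_new_state}, and \eqref{eq:int_cond_zeta}. The only cosmetic difference is the order in which the triangle inequality is applied.
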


\begin{proof}
	Recall from~\eqref{eq:def_Q} that $\mathscr{Q}= \gamma^{-\frac12} \risk \gamma^{-\frac12}$ and that $\mathscr{Q}$ is $dP\times ds$-a.e.\ bounded. 
	Therefore, there exists $c \in (0,\infty)$ such that 
	$dP\times ds$-a.e.\ we have that 
	\begin{equation}\label{eq:1513}
		\begin{split}
			\lvert (X-\zeta)^\top \risk (X-\zeta) \rvert
			& = \lvert (\gamma^{\frac12} X - \gamma^{\frac12}\zeta)^\top \mathscr{Q} (\gamma^{\frac12}X-\gamma^{\frac12}\zeta) \rvert
			\le c \lVert \gamma^{\frac12}X-\gamma^{\frac12}\zeta \rVert_F^2 .
		\end{split}
	\end{equation}
	Moreover, note that~\cref{lem:scH_hidden_dev} shows that 
		$\gamma^{\frac12} X = \gamma^{-\frac12} D^X - \scH^{\gamma^{-\frac12}D^X}$. 
	We thus obtain from \eqref{eq:1513} that 
	\begin{equation}\label{eq:1514}
		\begin{split}
			& E\bigg[ \Big\lvert \int_0^T (X(s)-\zeta(s))^\top \risk(s) (X(s)-\zeta(s)) ds \Big\rvert \bigg] \\
			& \le c \,
			E\bigg[ \int_0^T \lVert \gamma^{-\frac12}(s) D^X(s) - \scH^{\gamma^{-\frac12}D^X}(s) - \gamma^{\frac12}(s)\zeta(s) \rVert_F^2 \, ds \bigg] .
		\end{split}
	\end{equation}
	From \cref{rem:SDE_hidden_dev_and_new_state} we have that 
	\begin{equation*}
		E\bigg[ \int_0^T \big\lVert  \scH^{\gamma^{-\frac12}D^X}(s) \big\rVert_F^2 \, ds \bigg] 
		\le T \, E\bigg[ \sup_{s\in[0,T]} \big\lVert  \scH^{\gamma^{-\frac12}D^X}(s) \big\rVert_F^2 \bigg] < \infty .
	\end{equation*}
	Combining this, \eqref{eq:int_cond_zeta}, \eqref{eq:A1}, the triangle inequality, and \eqref{eq:1514} proves the claim. 
\end{proof}

\begin{proof}[Proof of \cref{lem:pm_cost_fct_as_LQ}]	
    Since $\kap$ is $dP\times ds$-a.e.\ bounded (cf.~\cref{rem:remark_on_setting_assumptions2}), we have from~\eqref{eq:A1} that  
	\begin{equation}\label{eq:1710}
		E\bigg[ \Big\lvert \int_0^T \big(\gamma^{-\frac12}(s) D^X(s)\big)^\top \kap(s) \gamma^{-\frac12}(s) D^X(s) ds \Big\rvert \bigg] < \infty .
	\end{equation}
    Furthermore, it follows from $X(T)=\xi$ and \Cref{lem:scH_hidden_dev} that 
	\begin{equation}\label{eq:1712a}
		\begin{split}
		(D^X(T))^\top \gamma^{-1}(T) D^X(T)
		& = (\gamma^{-\frac12}(T) D^X(T) )^\top \gamma^{-\frac12}(T) D^X(T) \\
		& = \big(\scH^{\gamma^{-\frac12} D^X }(T) + \gamma^{\frac12}(T) \xi\big)^\top \big(\scH^{\gamma^{-\frac12} D^X }(T) + \gamma^{\frac12}(T) \xi \big).
		\end{split}
	\end{equation}
	Moreover, we have that 
	$E[ \lvert (\scH^{\gamma^{-\frac12} D^X }(T))^\top \scH^{\gamma^{-\frac12} D^X }(T) \rvert]<\infty$
	due to \cref{rem:SDE_hidden_dev_and_new_state}. 
    Therefore, 
    the Cauchy--Schwarz inequality and the fact that $E[(\gamma^{\frac12}(T) \xi)^{\top} \gamma^{\frac12}(T) \xi]<\infty$ (cf.~\eqref{eq:int_cond_terminal_pos})
	ensure that 
	\begin{equation*}
		E\big[ \big\lvert \big(\scH^{\gamma^{-\frac12} D^X }(T) + \gamma^{\frac12}(T) \xi\big)^\top \big(\scH^{\gamma^{-\frac12} D^X }(T) + \gamma^{\frac12}(T) \xi \big) \big\rvert\big]<\infty .
	\end{equation*}
	This, \eqref{eq:1710}, \eqref{eq:1712a}, 
    \cref{lem:risk_term_finite_expectation}, and~\eqref{eq:def_costfct_pm} establish that 
    $\pmJ(X)$ is well defined and finite.  
	
	We have from 
	\eqref{eq:def_Q} and 
	\Cref{lem:scH_hidden_dev} 
	that 
	\begin{equation}\label{eq:1158}
		\begin{split}
			(X-\zeta)^\top \risk (X-\zeta) 
			& = \big( \gamma^{\frac12} X - \gamma^{\frac12} \zeta \big)^\top \mathscr{Q} \big( \gamma^{\frac12} X - \gamma^{\frac12} \zeta  \big)
			\\
			& = \big( \gamma^{-\frac12} D^X - \scH^{\gamma^{-\frac12} D^X} - \gamma^{\frac12} \zeta \big)^\top \mathscr{Q} \big( \gamma^{-\frac12} D^X - \scH^{\gamma^{-\frac12} D^X} - \gamma^{\frac12} \zeta \big)
			\\
			& = (\gamma^{-\frac12} D^X)^\top \mathscr{Q}  \gamma^{-\frac12} D^X
			- 2 (\gamma^{-\frac12} D^X)^\top \mathscr{Q} \big(\scH^{\gamma^{-\frac12} D^X} + \gamma^{\frac12} \zeta \big) \\
			& \quad + \big(\scH^{\gamma^{-\frac12} D^X} + \gamma^{\frac12} \zeta \big)^\top \mathscr{Q} \big(\scH^{\gamma^{-\frac12} D^X} + \gamma^{\frac12} \zeta \big) .
		\end{split}
	\end{equation}
	The assumption that $\mathscr{Q}$ is $dP\times ds$-a.e.\ bounded, \eqref{eq:int_cond_zeta}, \eqref{eq:A1}, 
    \cref{rem:SDE_hidden_dev_and_new_state}, and 
    \cref{lem:risk_term_finite_expectation} imply that 
	\begin{equation*}
		\begin{split}
			& E\bigg[ \bigg\lvert \int_0^T (\gamma^{-\frac12}(s) D^X(s))^\top 
			\mathscr{Q}(s) \gamma^{-\frac12}(s) D^X(s) ds \bigg\rvert \bigg] < \infty, \\ 
			& 
			E\bigg[ \bigg\lvert \int_0^T (\gamma^{-\frac12}(s) D^X(s))^\top \mathscr{Q}(s) \big(\scH^{\gamma^{-\frac12} D^X}(s) + \gamma^{\frac12}(s) \zeta(s) \big) ds \bigg\rvert  \bigg] 
			< \infty , \\
			& \text{and} \quad 
			E\bigg[ \bigg\lvert \int_0^T \big(\scH^{\gamma^{-\frac12} D^X}(s) + \gamma^{\frac12}(s) \zeta(s) \big)^\top \mathscr{Q}(s) \big(\scH^{\gamma^{-\frac12} D^X}(s) + \gamma^{\frac12}(s) \zeta(s) \big) ds \bigg\rvert \bigg] < \infty .
		\end{split}
	\end{equation*}
	The fact that $\KK = \mathscr{Q} +  \kap$, \eqref{eq:1710}, \eqref{eq:1712a}, \eqref{eq:1158},  and~\eqref{eq:def_costfct_pm} 
	hence show~\eqref{eq:cost_fct_shd}. 
\end{proof}

\begin{proof}[Proof of \cref{lem:bijective}]
	1. First, let $u\in\cL^2$, and define $X^u$ by $X^u(0-)=x$, $X^u(T)=\xi$, and 
	\begin{equation}\label{eq:1607b}
		X^u(s) = \gamma^{-\frac12}(s) (u(s)-\scH^u(s)), 
		\quad s\in [0,T).
	\end{equation}
    To show that $X^u \in \cA^{pm}$, note first that $u-\scH^u \in \cL^2$ due to $u\in\cL^2$ and \cref{rem:SDE_hidden_dev_and_new_state}. 
	Further, this, \eqref{eq:1607b}, and the fact that $\gamma^{-\frac12}$ has a.s.\ continuous paths yield that 
	\begin{equation*}
		\begin{split}
		&\int_0^T \lVert X^u(s) \rVert_F^2 ds  
        \leq \big(\textstyle{\sup_{s\in[0,T]}} \lVert \gamma^{-\frac12}(s) \rVert_F^2 \big) \int_0^T \lVert \gamma^{\frac12}(s) X^u(s) \rVert_F^2 ds 
		< \infty \text{ a.s.}
		\end{split}
	\end{equation*} 
    Let $\beta=(\beta(s))_{s\in[0,T]}$ be defined by \eqref{eq:defbeta} (using $X^u$).  
    From \cref{lem:computations_for_SDE},  \eqref{eq:1607b}, and \eqref{eq:def_scH_u} we obtain that 
    \begin{equation*}
		\begin{split}
			& d\big(\scH^u(s) - \gamma^{-\frac12}(s) \nu^{-1}(s) \beta(s) \big) \\
			& = 
			- \bigg( 
			\gamma^{-\frac12}(s) \rho(s) \gamma^{\frac12}(s) ds 
			+ 
			\OO^\top \Big( \frac12 \diagmu(s) - \frac38 \sum_{k=1}^m \diagsigma_{k}(s) \diagsigma_k(s) \Big) \OO ds \\
			& \qquad \quad + \frac12 \sum_{k=1}^m \OO^\top \diagsigma_{k}(s) \OO dW_k(s)
			\bigg) 
			\big(\scH^u(s) - \gamma^{-\frac12}(s) \nu^{-1}(s) \beta(s)\big), \quad s\in[0,T] ,
		\end{split}
	\end{equation*}
    and $\scH^u(0) - \gamma^{-\frac12}(0) \nu^{-1}(0) \beta(0) = 0$. 
    It follows that the process $\scH^{u} - \gamma^{-\frac12} \nu^{-1} \beta$
    is indistinguishable from the zero process (cf., for example, \cite[Theorem~3.17]{pardouxBook}). 
	We conclude that 
	\begin{equation}\label{eq:huequalsprodbeta}
		\scH^u(s) = \gamma^{-\frac12}(s) \nu^{-1}(s) 
		\beta(s), 
		\quad s \in [0,T]. 
	\end{equation}
    Let $D^{X^u}$ be defined by \eqref{eq:def_deviation_pm},    
	and note that \eqref{eq:defbeta} and \eqref{eq:huequalsprodbeta} demonstrate that 
	\begin{equation*}
    \begin{split}
		\gamma^{-\frac12}(s) D^{X^u}(s)
        & =\gamma^{\frac12}(s) X^u(s) + \gamma^{-\frac12}(s) \nu^{-1}(s) \beta(s)\\
        & = \gamma^{\frac12}(s) X^u(s) + \scH^u(s), \quad s\in[0,T].
    \end{split}
	\end{equation*} 
	Hence, \eqref{eq:1607b} shows 
    for all $s \in [0,T)$ that 
	$\gamma^{-\frac12}(s) D^{X^u}(s)= u(s)$. 
	Since $u\in\cL^2$, we conclude that~\eqref{eq:A1} holds and thus that $X^u\in \cA^{pm}$. 
	In particular, $\ol{\varphi}$ is well defined.
	Moreover, we have for all $s\in[0,T)$ that 
	$$(\varphi(\ol{\varphi}(u)))(s)=(\varphi(X^u))(s)=\gamma^{-\frac12}(s) D^{X^u}(s)=u(s).$$ 
	Hence, it holds $dP\times ds$-a.e.\ that 
	$\varphi(\ol{\varphi}(u))=u$. 		
	
	\smallskip
	
	2. Now, let $X \in \cA^{pm}$. 
	We want to show that $X=\ol{\varphi}(\varphi(X))$. 
	Since $X \in \cA^{pm}$, 
	it holds that $X(0-)=x=(\ol{\varphi}(\varphi(X)))(0-)$ and $X(T)=\xi=(\ol{\varphi}(\varphi(X)))(T)$. 
	Moreover, recall that $\varphi(X)=\gamma^{-\frac12} D^X$ and note that \Cref{lem:scH_hidden_dev} ensures 
    for all $s\in[0,T)$ that 
	\begin{equation*}
		\begin{split}
			(\ol{\varphi}(\varphi(X)))(s) 
			& = \gamma^{-\frac12}(s) ({\varphi(X)}(s) -\scH^{\varphi(X)}(s)) 
			= \gamma^{-\frac12}(s) \big(\gamma^{-\frac12}(s) D^X(s) - \scH^{\gamma^{-\frac12} D^X}(s) \big) \\
			& = \gamma^{-\frac12}(s) (\gamma^{\frac12}(s) X(s) ) 
			= X(s) .
		\end{split}
	\end{equation*}
	This completes the proof.	
\end{proof}

\begin{proof}[Proof of \cref{lem:convergence_SDE}]
	Let $\delta \scH^N = \scH^N - \scH$, $N\in\N$, and denote 
	$D^N = D^{X^N}$, $N\in\N$, $D=D^X$.  
	Moreover, for each $N\in\N$ let $\wt b^N\colon [0,T]\times \Omega \times \R^n \to \R^n$ and $\wt \sigma^N \colon [0,T] \times \Omega \times \R^n \to \R^{n\times m}$ be defined by 
	\begin{equation*}
		\begin{split}
			\wt b^N(s,y) & = \bigg( - \gamma^{-\frac12}(s) \rho(s) \gamma^{\frac12}(s) 
			- \OO^\top \Big( \diagmu(s) - \frac12 \sum_{k=1}^m \diagsigma_{k}(s) \diagsigma_k(s) \Big) 
			\OO \bigg) \\
			& \quad \cdot \Big(\gamma^{-\frac12}(s) D^N(s) - \gamma^{-\frac12}(s) D(s) \Big) 
			+ \OO^\top \Big( \frac12 \diagmu(s) - \frac18 \sum_{k=1}^m \diagsigma_{k} \diagsigma_{k}(s) \Big) \OO y, \\
			\wt \sigma^N(s,y) & = - \Big( \sum_{l=1}^n (\OO^\top \diagsigma_{k}(s) \OO)_{i,l} \big(\gamma^{-\frac12}(s) D^N(s) - \gamma^{-\frac12}(s) D(s) \big)_{l} \Big)_{(i,k)\in\{1,\ldots,n\}\times\{1,\ldots,m\}} \\
			& \quad + \frac12 \Big( \sum_{l=1}^n (\OO^\top \diagsigma_{k}(s) \OO)_{i,l} y_l \Big)_{(i,k)\in\{1,\ldots,n\}\times\{1,\ldots,m\}}
		\end{split}
	\end{equation*}
	for $s\in[0,T]$, $y\in\R^n$.
	We then have for all $N\in\N$ that 
	\begin{equation*}
		d (\delta \scH^N(s)) = \wt b^N(s,\delta \scH^N(s)) ds + \wt\sigma^N(s,\delta \scH^N(s)) dW_s, \quad s\in[0,T], \quad \delta\scH^N(0) = 0.
	\end{equation*}
	Recall that $\mu$ and $\sigma$ are $dP\times ds$-a.e.\ bounded. Therefore, there exists $c_1\in(0,\infty)$ such that 
	for all $N\in\N$ and $y,z\in\R^n$ it holds $dP\times ds$-a.e.\  that
	\begin{equation*}
		\begin{split}
			\lVert \wt b^N(s,y) - \wt b^N(s,z) \rVert_F + \lVert \wt \sigma^N(s,y) - \wt \sigma^N(s,z) \rVert_F
			& \le c_1 \lVert y-z \rVert_F .
		\end{split}
	\end{equation*}
	Note furthermore that boundedness of $\mu$, $\sigma$, and $\gamma^{-\frac12} \rho \gamma^{\frac12}$, the fact that $\gamma^{-\frac12} D^N - \gamma^{-\frac12} D \in\cL^2$, and Jensen's inequality establish that there exists $c_2\in(0,\infty)$ such that for all $N\in\N$ it holds that 
	\begin{equation}\label{eq:estimatetildebtildesigmazero}
		\begin{split}
			& E\bigg[ \bigg( \int_0^T \lVert \wt b^N(s,0) \rVert_F ds \bigg)^2 \bigg]
			+ 	E\bigg[ \int_0^T \lVert \wt \sigma^N(s,0) \rVert_F^2 ds \bigg] \\
			& \le c_2 E\bigg[ \int_0^T \big\lVert \gamma^{-\frac12}(s) D^N(s) - \gamma^{-\frac12}(s) D(s) \big\rVert_F^2 ds \bigg]
			< \infty.
		\end{split}
	\end{equation}
	It now follows from, for instance, \cite[Theorem~3.2.2]{zhang}, that there exists $c_3\in(0,\infty)$ such that for all $N\in\N$ it holds that 
	\begin{equation*}
		\begin{split}
			E\Big[ \sup_{s\in[0,T]} \lVert \delta \scH(s) \rVert_F^2 \Big]
			& \le c_3 E\bigg[ \bigg( \int_0^T \lVert \wt b^N(s,0) \rVert_F ds \bigg)^2 
			+  \int_0^T \lVert \wt \sigma^N(s,0) \rVert_F^2 ds \bigg] 
            .
		\end{split}
	\end{equation*}
	This, \eqref{eq:estimatetildebtildesigmazero}, and the assumption that 
    $\lim_{N\to\infty} \md(X,X^N)  = 0$ imply the claim. 
\end{proof}

The following technical lemma is used in the proof of \cref{thm:contextcostfct}.

\begin{lemma}\label{lem:convergence_helper}
    (i)
	Let $A=(A(s))_{s\in[0,T]}$ be an $\R^{n\times n}$-valued progressively measurable process such that $A$ is $dP\times ds$-a.e.\ bounded.
	Let $Y=(Y(s))_{s\in[0,T]} \in \cL^2$, $\wt Y=(\wt Y(s))_{s\in[0,T]} \in \cL^2$,  $Y^N=(Y^N(s))_{s\in[0,T]} \in \cL^2$, $N\in\N$, and $\wt Y^N=(\wt Y^N(s))_{s\in[0,T]} \in \cL^2$, $N\in\N$, and assume that $\lim_{N \to \infty} \lVert Y^N - Y \rVert_{\cL^2} = 0$ and $\lim_{N \to \infty} \lVert \wt Y^N - \wt Y \rVert_{\cL^2} = 0$.
	It then holds that 
	\begin{equation}\label{eq:1749}
		\lim_{N\to\infty} E\bigg[ \int_0^T \big\lvert  (Y^N(s))^\top A(s) \wt Y^N(s) - (Y(s))^\top A(s) \wt Y(s) \big\rvert ds \bigg] = 0 .
	\end{equation}

	(ii)
	Let $Y \in L^2(\Omega,\cF_T,P;\R^n)$, $Y^N \in  L^2(\Omega,\cF_T,P;\R^n)$, $N\in\N$, and assume that 
	$\lim_{N \to \infty} \lVert Y^N - Y \rVert_{L^2} = 0$. 
	It then holds that 
	$\lim_{N\to\infty} E[\lvert  (Y^N)^\top Y^N - Y^\top Y \rvert ] = 0$.
\end{lemma}

\begin{proof}
    (i) 
    Note that for all 
    $y,\wt y,z,\wt z\in\R^n$ and $\wt A \in \R^{n\times n}$ it holds that 
    \begin{align}\label{eq:1924new}
            \lvert  z^\top \wt A \wt z - y^\top \wt A \wt y \rvert 
            & \le \sum_{i=1}^n \sum_{j=1}^n \lvert \wt A_{i,j} \rvert 
            \lvert z_i \wt z_j - y_i \wt y_j \lvert \\ 
            & \le \Big(\max_{i,j} \lvert \wt A_{i,j} \rvert \Big)
            \bigg[ \bigg(  
            \sum_{i=1}^n \lvert z_i \rvert \bigg) \bigg(
            \sum_{j=1}^n \lvert \wt z_j - \wt y_j \rvert \bigg)
            + \bigg( \sum_{j=1}^n \lvert \wt y_j \rvert \bigg) \bigg(
            \sum_{i=1}^n \lvert z_i - y_i \rvert \bigg)
            \bigg] . \nonumber
    \end{align}
    Furthermore, observe that the Cauchy--Schwarz inequality and Jensen's inequality show that 
    \begin{equation*}
        \begin{split}
            & \lim_{N\to\infty}  E\bigg[ \int_0^T \bigg(  
            \sum_{i=1}^n \lvert (Y^N(s))_i \rvert \bigg) \bigg(
            \sum_{j=1}^n \lvert (\wt Y^N(s))_j - (\wt Y(s))_j \rvert \bigg) ds \bigg] \\
            & \le 
            \lim_{N\to\infty}
            n
            \bigg( E\bigg[ \int_0^T \lVert Y^N(s) \rVert_F^2  ds \bigg] \bigg)^{\frac12}
            \bigg(
            E\bigg[ \int_0^T \lVert \wt  Y^N(s) - \wt Y(s) \rVert_F^2  ds \bigg]
            \bigg)^{\frac12} 
            = 0,
        \end{split}
    \end{equation*}
    where we have used the assumption $\lim_{N\to\infty} E[\int_0^T \lVert \wt Y(s) - \wt Y^N(s) \rVert_F^2 ds] = 0$ and that 
    $\sup_{N\in\N} E[ \int_0^T \lVert Y^N(s) \rVert_F^2 ds ] <\infty$ due to $\lim_{N\to\infty} E[\int_0^T \lVert Y(s) - Y^N(s) \rVert_F^2 ds] = 0$. 
    The second term arising from \eqref{eq:1924new} can be treated similarly.
    Since $A$ is $dP\times ds$-a.e.\ bounded, we thus obtain~\eqref{eq:1749}.
    
    (ii)
	This can be proven similar to part (i).
\end{proof}

The following lemma is a variant of \cite[Lemma 6.4]{ackermann2022reducing} and thus relies on Lemma~2.7 in Section~3.2 of Karatzas \& Shreve~\cite{karatzasshreve}.
In the form of \cref{cor:KS_approx_seq_finite_var_O} below it is used to construct an approximating sequence of finite-variation strategies in the proof of \cref{thm:contextcostfct}.

\begin{lemma}\label{lem:KS_approx_seq_finite_var}
	Let $L=(L(s))_{s\in[0,T]}$ be the $\R^{n\times n}$-valued process defined by 
	\begin{equation}\label{eq:def_L_for_helper_process_Z}
	\begin{split}
	L(r) & = 
	\int_0^r \frac18 \sum_{k=1}^m \diagsigma_{k}(s) \diagsigma_{k}(s) ds 
	- \sum_{k=1}^m \int_0^r \frac12 \diagsigma_{k}(s) dW_k(s), 
	\quad r \in [0,T], 
	\end{split}
	\end{equation}
	and let $K=(K(s))_{s\in[0,T]}$ be the unique solution of the SDE 
	\begin{equation}\label{eq:def_helper_process_Z}
	dK(s) = (dL(s)) K(s), \quad s\in[0,T], \quad K(0)=I_n .
	\end{equation}
	Let 
    $u=(u(s))_{s\in[0,T]} \in \cL^2$. 
	Then there exists a sequence of $\R^n$-valued bounded c\`adl\`ag finite-variation processes $(v^N)_{N\in\N}$ such that 
	\begin{equation*}
	   \lim_{N\to\infty} E\bigg[ \int_0^T \lVert u(s) - K(s) v^N(s) \rVert_F^2 \, ds \bigg]
	   = 0 .
	\end{equation*}
	Moreover, it holds for all $N\in\N$ that $Kv^N$ is an $\R^n$-valued c\`adl\`ag semimartingale and $E[\sup_{s \in [0,T]} \lVert K(s) v^N(s) \rVert_F^2 ] < \infty$. 
\end{lemma}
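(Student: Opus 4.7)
The plan is to reduce the problem to a standard $L^2$-approximation by simple processes, leveraging the fact that $K$ is diagonal with entries of geometric-Brownian-motion type driven by bounded coefficients. In particular $K$ is continuous, invertible, and both $K$ and $K^{-1}$ admit finite moments of every order, with $E[\sup_{s\in[0,T]}(\|K(s)\|_F^p + \|K^{-1}(s)\|_F^p)]<\infty$ for every $p\in[1,\infty)$. Since $K^{-1}u$ need not lie in $\cL_t^2$ a priori (the mere $\cL_t^2$-integrability of $u$ is insufficient against the unbounded-moment behaviour of $K^{-1}$), a localization is required before the abstract approximation can be applied.

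Concretely, I would first localize using the stopping times
\begin{equation*}
\sigma_M := \inf\{s\in[t,T]:\ \|K(s)\|_F + \|K^{-1}(s)\|_F > M\} \wedge T, \qquad M\in\N.
\end{equation*}
By pathwise continuity, $\sigma_M \nearrow T$ a.s.; setting $u_M := u\,\mathbf{1}_{[t,\sigma_M)}$, dominated convergence yields $u_M \to u$ in $\cL_t^2$, and the process $\tilde u_M := K^{-1} u_M$ lies in $\cL_t^2$ with $\|\tilde u_M\|_{\cL_t^2} \le M\,\|u\|_{\cL_t^2}$. Next, I would apply Lemma~2.7 in Section~3.2 of \cite{karatzasshreve} to approximate $\tilde u_M$ in $\cL_t^2$ by a sequence of bounded simple (hence c\`adl\`ag and of finite variation) processes $w^{N,M}$, and then stop them at $\sigma_M$ by setting $v^{N,M} := w^{N,M}\,\mathbf{1}_{[t,\sigma_M)}$. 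The stopped process remains bounded, c\`adl\`ag, and of finite variation, the indicator contributing at most one additional jump at $\sigma_M$.

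A routine estimate, using $K\tilde u_M = u_M$ and $\|K\|_F \le M$ on $[t,\sigma_M)$, yields
\begin{equation*}
\|K v^{N,M} - u\|_{\cL_t^2}^2 \le 2M^2\,\|w^{N,M} - \tilde u_M\|_{\cL_t^2}^2 + 2\,\|u - u_M\|_{\cL_t^2}^2,
\end{equation*}
and a diagonal extraction $N = N(M)$ produces a single sequence $v^N := v^{N(M),M}$ with $K v^N \to u$ in $\cL_t^2$. The remaining requirements are immediate: $K v^N$ is a semimartingale by integration by parts (continuous semimartingale times finite-variation process), while $\|K v^N\|_F \le \|K\|_F\,\sup_s \|v^N(s)\|_F$, combined with the deterministic boundedness of $v^N$ and $E[\sup_s \|K(s)\|_F^2]<\infty$, gives $E[\sup_s \|K(s) v^N(s)\|_F^2]<\infty$. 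The only delicate point is the double localization/diagonalization that reconciles the unbounded moments of $K^{-1}$ with the $\cL_t^2$-hypothesis on $u$; once this is in place, everything reduces to the classical simple-process approximation.
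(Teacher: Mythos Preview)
Your argument is correct. The localization by $\sigma_M$ handles exactly the right difficulty (that $K^{-1}u$ need not be square-integrable), the estimate
\[
\|K v^{N,M} - u\|_{\cL_t^2}^2 \le 2M^2\,\|w^{N,M} - \tilde u_M\|_{\cL_t^2}^2 + 2\,\|u - u_M\|_{\cL_t^2}^2
\]
is valid because $K v^{N,M}-u_M = K(w^{N,M}-K^{-1}u)\mathbf{1}_{[t,\sigma_M)}$ with $\|K\|_F\le M$ on that set, and the diagonal extraction (choosing $N(M)$ so that $M^2\|w^{N(M),M}-\tilde u_M\|_{\cL_t^2}^2\to 0$) closes the argument. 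The semimartingale and moment claims for $Kv^N$ follow as you indicate.

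The paper takes a slightly different route: it observes that $L$, and hence $K$, is diagonal, so the problem decouples into $n$ scalar problems, each of which is handled by invoking the single-asset result \cite[Lemma~6.4]{ackermann2022reducing}; the components $v_j^N$ are then assembled into $v^N$. Your approach is more self-contained---you reconstruct the localization that presumably lives inside that cited lemma and run it directly in the multivariate setting, bypassing the componentwise reduction. The paper's version is shorter precisely because it leans on the external reference; yours makes the mechanism visible and would work even if $K$ were not diagonal, provided the moment bounds on $K$ and $K^{-1}$ hold.
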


\begin{proof}
	Note that the process $L$ takes values in the set of $n\times n$ diagonal matrices.
	This and~\eqref{eq:def_helper_process_Z} show that for all $i,j\in\{1,\ldots,n\}$, $r \in [0,T]$ we have that
	\begin{equation*}
	K_{j,i}(r) = 
	\delta_{j,i}
	+ \int_0^r K_{j,i}(s) dL_{j,j}(s) ,
	\end{equation*} 
	where $\delta_{j,i}=1$ if $i=j$ and $\delta_{i,j}=0$ else. 
	We conclude that $K$ takes values in the set of $n\times n$ diagonal matrices, too, 
	and that for all $j\in\{1,\ldots,n\}$ we have that 
	$K_{j,j}$ is the stochastic exponential of $L_{j,j}$.  
	By a straightforward adaptation of the proof of \cite[Lemma~6.4]{ackermann2022reducing}, we can show that for every $j\in\{1,\ldots,n\}$ there exists a sequence of one-dimensional bounded c\`adl\`ag finite-variation processes $(v_j^N)_{N\in\N}$ such that 
	\begin{equation*}
	\lim_{N\to\infty} E\bigg[ \int_0^T \big( u_j(s) - K_{j,j}(s) v^N_j(s) \big)^2 \, ds \bigg]
	= 0
	\end{equation*}
	and such that for all $N\in \N$ it holds that 
	$K_{j,j}v_j^N$ is a c\`adl\`ag semimartingale with $E[\sup_{s \in [0,T]} \lvert K_{j,j}(s) v_j^N(s) \rvert^2 ] < \infty$.
	Finally, let $v^N=(v_1^N,\ldots,v_n^N)^\top$, $N\in\N$, and observe that the sequence $(v^N)_{N\in\N}$ has the desired properties. 
\end{proof}

By applying \cref{lem:KS_approx_seq_finite_var} to the process $\OO u \in \cL^2$ and using the invariance of the norm $\lVert \cdot \rVert_F$ with respect to orthogonal transformations, we obtain the following corollary of \cref{lem:KS_approx_seq_finite_var}.

\begin{corollary}\label{cor:KS_approx_seq_finite_var_O}
	Let $L=(L(s))_{s\in[0,T]}$ be defined by \eqref{eq:def_L_for_helper_process_Z} and let 
	$K=(K(s))_{s\in[0,T]}$ be defined by \eqref{eq:def_helper_process_Z}. 
	Let 
    $u=(u(s))_{s\in[0,T]} \in \cL^2$. 
	Then there exists a sequence of $\R^n$-valued bounded c\`adl\`ag finite-variation processes $(v^N)_{N\in\N}$ such that for the sequence of $\R^n$-valued c\`adl\`ag semimartingales $(u^N)_{N\in\N}$ defined by $u^N = \OO^\top K v^N$, $N\in\N$, it holds that 
	\begin{equation}\label{eq:1349a}
	\lim_{N\to\infty} E\bigg[ \int_0^T \lVert u(s) - u^N(s) \rVert_F^2 \, ds \bigg]
	= 0 
	\end{equation}
	and 
    \begin{equation}\label{eq:1349b}
     E\Big[\sup_{s \in [0,T]} \lVert u^N(s) \rVert_F^2 \Big] < \infty, \quad N\in\N.   
    \end{equation}
\end{corollary}

The next lemma is employed in the proof of \cref{thm:contextcostfct} to establish that the constructed sequence of strategies is a sequence of finite-variation strategies.

\begin{lemma}\label{lem:dynamics_of_X_when_u_semimart}
		Assume that $u \in\cL^2$ is a c\`adl\`ag semimartingale and let $X^u=\ol{\varphi}(u) \in \cA^{pm}$ (cf.\ \cref{lem:bijective}).  
		Then $X^u$ is a c\`adl\`ag semimartingale and for all $s \in [0,T)$ it holds 
		\begin{equation*}
		\begin{split}
		dX^u(s) & = \gamma^{-\frac12}(s) du(s)
		+ \frac12 \sum_{k=1}^m \gamma^{-\frac12}(s) \OO^\top \diagsigma_k(s) \OO u(s) dW_k(s) \\
		& \quad 
		+ d[\gamma^{-\frac12},u-\scH^u](s)
		- \frac14 \sum_{k=1}^m \gamma^{-\frac12}(s) 
		\OO^\top \diagsigma_{k}(s) \diagsigma_{k}(s) \OO \scH^u(s) ds  
		 \\
		& \quad 
		+  \gamma^{-\frac12}(s) 
		\bigg(\gamma^{-\frac12}(s) \rho(s) \gamma^{\frac12}(s)
		+ \OO^\top 
		\Big( \frac12 \diagmu(s) - \frac18 \sum_{k=1}^m \diagsigma_{k}(s) \diagsigma_{k}(s) \Big)
		\OO \bigg) u(s) ds 
		.
		\end{split}
		\end{equation*}
\end{lemma}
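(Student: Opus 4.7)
\textbf{Proof proposal for \cref{lem:dynamics_of_X_when_u_semimart}.}

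The plan is to start from the explicit representation $X^u(s) = \gamma^{-\frac12}(s)(u(s) - \scH^u(s))$, $s \in [t,T)$, provided by \cref{lem:bijective}, and then apply Itô's product rule. Since $u$ is a càdlàg semimartingale by assumption, $\scH^u$ is a continuous semimartingale as the solution of the linear SDE~\eqref{eq:def_scH_u} (see \cref{rem:SDE_hidden_dev_and_new_state}), and $\gamma^{-\frac12}$ is a continuous semimartingale with dynamics described by \cref{lemma:ito_to_lambda} and \cref{cor:dlam_lam}, the process $X^u$ is itself a càdlàg semimartingale. Integration by parts then gives
\begin{equation*}
dX^u(s) = \gamma^{-\frac12}(s)\,du(s) - \gamma^{-\frac12}(s)\,d\scH^u(s) + \bigl(d\gamma^{-\frac12}(s)\bigr)(u(s)-\scH^u(s)) + d[\gamma^{-\frac12},u-\scH^u](s),
\end{equation*}
so the $d[\gamma^{-\frac12},u-\scH^u]$ term in the claim appears automatically and need not be further simplified.

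The next step is to insert the SDE \eqref{eq:def_scH_u} for $d\scH^u$ together with the definitions \eqref{eq:def_Ck} and \eqref{eq:def_A_B} of $\mathscr{A}$, $\mathscr{B}$, and $\mathscr{C}^k$, and to insert the dynamics
$d\gamma^{-\frac12}(s) = \gamma^{-\frac12}(s)\OO^\top\bigl(-\tfrac12\diagmu(s) + \tfrac38\sum_{k=1}^m \diagsigma_k(s)\diagsigma_k(s)\bigr)\OO\,ds - \sum_{k=1}^m \mathscr{C}^k(s)\gamma^{-\frac12}(s)\,dW_k(s)$,
which follows from \cref{cor:dlam_lam} after left-multiplying by $\OO^\top$ and right-multiplying by $\OO$. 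A key observation that considerably eases the bookkeeping is that all the matrix-valued coefficients appearing here, namely $\gamma^{\alpha}$, $\OO^\top \diagmu\OO$, $\OO^\top\diagsigma_k\OO=2\mathscr{C}^k$, and $\OO^\top\diagsigma_k\diagsigma_k\OO$, share the common orthogonal eigenbasis $\OO$ and thus pairwise commute (they become diagonal in the $\OO$-basis).

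It then remains to collect the $dW_k$-coefficients and the $ds$-coefficients separately. Using that $\gamma^{-\frac12}\mathscr{C}^k = \mathscr{C}^k\gamma^{-\frac12}$, the martingale part coming from $-\gamma^{-\frac12}d\scH^u$ and from the martingale part of $(d\gamma^{-\frac12})(u-\scH^u)$ telescopes: the $\scH^u$-contributions cancel and the $u$-contributions collapse to $\sum_{k=1}^m \gamma^{-\frac12}\mathscr{C}^k u\,dW_k = \tfrac12 \sum_{k=1}^m \gamma^{-\frac12}\OO^\top\diagsigma_k\OO\, u\,dW_k$, reproducing the $dW_k$-term of the claim. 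For the finite-variation part, one separately groups the coefficients of $\scH^u$ and of $u$: substituting $\mathscr{A}=\OO^\top(\tfrac12\diagmu - \tfrac18\sum_{k=1}^m\diagsigma_k\diagsigma_k)\OO$ and $\mathscr{B} = -\gamma^{-\frac12}\rho\gamma^{\frac12} - \OO^\top\diagmu\OO + \tfrac12 \sum_{k=1}^m \OO^\top\diagsigma_k\diagsigma_k\OO$, the $\diagmu$-terms in front of $\scH^u$ cancel while the $\diagsigma_k\diagsigma_k$-terms combine via $(\tfrac18 - \tfrac38) = -\tfrac14$, yielding exactly $-\tfrac14\sum_{k=1}^m \gamma^{-\frac12}\OO^\top\diagsigma_k\diagsigma_k\OO\,\scH^u\,ds$; similarly for the coefficient of $u$ the combination $(\diagmu - \tfrac12\diagmu) + (-\tfrac12 + \tfrac38)\sum_{k=1}^m\diagsigma_k\diagsigma_k = \tfrac12\diagmu - \tfrac18\sum_{k=1}^m\diagsigma_k\diagsigma_k$ together with the $\gamma^{-\frac12}\rho\gamma^{\frac12}$-contribution from $\mathscr{B}$ reproduces the bracketed expression in the claim.

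The main obstacle is purely organizational: keeping track of the many matrix-valued summands and not confusing left- and right-multiplications. The simultaneous diagonalizability by $\OO$ is what makes every required cancellation work; without it, drift and diffusion terms could not combine into the compact expressions appearing in the claim. Once all coefficients are collected and the claimed identities verified, the lemma follows.
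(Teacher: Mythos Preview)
Your proposal is correct and follows essentially the same approach as the paper: both start from $X^u=\gamma^{-\frac12}(u-\scH^u)$, apply integration by parts, insert the SDE~\eqref{eq:def_scH_u} for $\scH^u$ and the dynamics of $\gamma^{-\frac12}$ from \cref{cor:dlam_lam}, and then collect the $dW_k$- and $ds$-coefficients. The only (purely expository) difference is that you make the simultaneous diagonalizability by $\OO$ explicit as the organizing principle, whereas the paper achieves the same rearrangements by repeatedly inserting $\OO\OO^\top=I_n$ without naming the commutativity.
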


\begin{proof}
	Let $\scH^u$ be the solution of the SDE~\eqref{eq:def_scH_u} associated to $u$ and recall that $X^u(0-)=x$, $X^u(T)=\xi$, and 
	$X^u(s)=\gamma^{-\frac12}(s) (u(s)-\scH^u(s))$, $s\in[0,T)$
	(cf.\ \cref{lem:bijective}). 
	Since $u$ is a c\`adl\`ag semimartingale and $\gamma^{-\frac12}$ and $\scH^u$ are continuous semimartingales, we have that also $X^u$ is a c\`adl\`ag semimartingale. 
	Integration by parts implies for all $s\in[0,T)$ that 
	\begin{equation}\label{eq:1213a}
	\begin{split}
	dX^{u}(s) & = \gamma^{-\frac12}(s) d(u(s)-\scH^{u}(s))
	+ (d\gamma^{-\frac12}(s)) (u(s) - \scH^{u}(s)) 
	   + d[\gamma^{-\frac12},u-\scH^u](s) .
	\end{split}
	\end{equation}
	Further, \eqref{eq:def_scH_u} demonstrates 
    for all $s \in [0,T]$ that 
	\begin{equation}\label{eq:1213b}
	\begin{split}
	& \gamma^{-\frac12}(s) d(u(s) - \scH^{u}(s)) \\
	& = \gamma^{-\frac12}(s) du(s) 
	- \gamma^{-\frac12}(s) 
	\OO^\top \bigg(\frac12 \diagmu(s) - \frac18 \sum_{k=1}^m \diagsigma_{k}(s) \diagsigma_{k}(s) \bigg) \OO \scH^u(s) ds  \\
	& \quad 
	+  \gamma^{-\frac12}(s) 
	\bigg(\gamma^{-\frac12}(s) \rho(s) \gamma^{\frac12}(s)
	+ \OO^\top 
	\Big( \diagmu(s) - \frac12 \sum_{k=1}^m \diagsigma_{k}(s) \diagsigma_{k}(s) \Big)
	\OO \bigg) u(s) ds \\
	& \quad 
	+ \sum_{k=1}^m \gamma^{-\frac12}(s) \OO^\top \diagsigma_{k}(s) \OO u(s) dW_k(s) 
	- \frac12 \sum_{k=1}^m \gamma^{-\frac12}(s) \OO^\top \diagsigma_{k}(s) \OO \scH^{u}(s) dW_k(s)
	.
	\end{split}
	\end{equation}
	Next, by using $\gamma^{-\frac12}=\OO^\top \lam^{-\frac12} \OO$, the dynamics in~\eqref{eq:funct_of_lambda_dyn}, and the fact that $\OO\OO^\top=I_n$,  
	we obtain 
    for all $s \in [0,T]$ that 
	\begin{equation}\label{eq:1213c}
	\begin{split}
	(d\gamma^{-\frac12}(s)) (u(s) & - \scH^{u}(s)) 
	= \OO^\top \big( d\lam^{-\frac12}(s) \big) \OO  (u(s) - \scH^{u}(s) ) \\
	& = 
	\OO^\top \lam^{-\frac12}(s) \bigg( -\frac12 \diagmu(s) + \frac38 \sum_{k=1}^m \diagsigma_{k}(s)  (\diagsigma_{k}(s) )^\top  \bigg) \OO (u(s) - \scH^{u}(s) ) ds \\
	& \quad 
	- \frac12 \sum_{k=1}^m \OO^\top \lam^{-\frac12}(s) \diagsigma_{k}(s) \OO (u(s) - \scH^{u}(s) ) dW_k(s)  \\
	& = 
	\gamma^{-\frac12}(s) \OO^\top \bigg( -\frac12 \diagmu(s) + \frac38 \sum_{k=1}^m \diagsigma_{k}(s)  (\diagsigma_{k}(s) )^\top  \bigg) \OO (u(s) - \scH^{u}(s) ) ds \\
	& \quad 
	- \frac12 \sum_{k=1}^m \gamma^{-\frac12}(s) \OO^\top \diagsigma_{k}(s) \OO (u(s) - \scH^{u}(s)) dW_k(s) .
	\end{split}
	\end{equation}
	Combining \eqref{eq:1213a}, \eqref{eq:1213b}, and \eqref{eq:1213c} 
    establishes the claim. 
\end{proof}

We now use the preceding results to prove \cref{thm:contextcostfct}. 

\begin{proof}[Proof of \cref{thm:contextcostfct}]
	(i)
	For $N\in\N$ we denote by $D^N$ the deviation process that is associated to $X^N$ via~\eqref{eq:def_deviation_pm} and we denote by $\scH^N$ the solution of the SDE~\eqref{eq:def_scH_u} that is associated to $\gamma^{-\frac12} D^N$. 
	Moreover, we denote the deviation associated to $X$ by $D$ and the solution of the SDE~\eqref{eq:def_scH_u} associated to $\gamma^{-\frac12} D$ by $\scH$. 
	It follows from \Cref{lem:pm_cost_fct_as_LQ} 
    for all $N\in\N$ that 
	\begin{equation*}
		\begin{split}
			& \lvert \pmJ(X^N) - \pmJ(X) \rvert \\
			& = \bigg\lvert
			\frac12 E\big[ \big(\scH^{N}(T) + \gamma^{\frac12}(T) \xi \big)^{\!\top} \! \big(\scH^{N}(T) + \gamma^{\frac12}(T) \xi \big) \! - \! \big(\scH(T) + \gamma^{\frac12}(T) \xi \big)^{\!\top}\! \big(\scH(T) + \gamma^{\frac12}(T) \xi \big) \big] \\
			& \quad + 
			E\bigg[ \int_0^T (\gamma^{-\frac12}(s) D^N(s))^\top 
			\KK(s) 
			\gamma^{-\frac12}(s) D^N(s) - (\gamma(s)^{-\frac12} D(s))^\top 
			\KK(s) 
			\gamma^{-\frac12}(s) D(s) ds \bigg] \\
			& \quad  
			+ E\bigg[ \int_0^T \big(\scH^{N}(s) + \gamma^{\frac12}(s) \zeta(s) \big)^\top \mathscr{Q}(s) \big(\scH^{N}(s) + \gamma^{\frac12}(s) \zeta(s) \big) ds \bigg] \\
			& \quad  
			- E\bigg[ \int_0^T \big(\scH(s) + \gamma^{\frac12}(s) \zeta(s) \big)^\top \mathscr{Q}(s) \big(\scH(s) + \gamma^{\frac12}(s) \zeta(s) \big) ds \bigg] \\
			& \quad - 2 E\bigg[ \int_0^T (\gamma^{-\frac12}(s) D^N(s))^\top \mathscr{Q}(s) \big(\scH^{N}(s) + \gamma^{\frac12}(s) \zeta(s) \big) ds \bigg] \\
			& \quad + 2 E\bigg[ \int_0^T (\gamma^{-\frac12}(s) D(s))^\top \mathscr{Q}(s) \big(\scH(s) + \gamma^{\frac12}(s) \zeta(s) \big) ds \bigg] 
			\bigg\rvert
			.
		\end{split}
	\end{equation*} 	
	This, the triangle inequality, and Jensen's inequality imply that there exists $c_1\in(0,\infty)$ such that for all $N\in\N$ it holds  
	\begin{equation}\label{eq:1434}
		\begin{split}
			& c_1 \lvert \pmJ(X^N) - \pmJ(X) \rvert \\
			& \leq E\big[ \lvert \big(\scH^{N}(T) + \gamma^{\frac12}(T) \xi \big)^{\!\top} \! \big(\scH^{N}(T) + \gamma^{\frac12}(T) \xi \big) \! - \! \big(\scH(T) + \gamma^{\frac12}(T) \xi \big)^{\!\top}\! \big(\scH(T) + \gamma^{\frac12}(T) \xi \big) \rvert \big] \\
			& \quad +  E\bigg[ \int_0^T \big\lvert (\gamma^{-\frac12}(s) D^N(s))^\top 
			\KK(s) 
			\gamma^{-\frac12}(s) D^N(s) - (\gamma^{-\frac12}(s) D(s))^\top 
			\KK(s) 
			\gamma^{-\frac12}(s) D(s) \big\rvert ds \bigg] \\
			& \quad + E\bigg[ \int_0^T \big\lvert 
			\big(\scH^{N}(s) + \gamma^{\frac12}(s) \zeta(s) \big)^\top \mathscr{Q}(s) \big(\scH^{N}(s) + \gamma^{\frac12}(s) \zeta(s) \big) \\
			& \qquad \qquad \qquad - \big(\scH(s) + \gamma^{\frac12}(s) \zeta(s) \big)^\top \mathscr{Q}(s) \big(\scH(s) + \gamma^{\frac12}(s) \zeta(s) \big)
			\big\rvert \, ds \bigg] \\
			& \quad + E\bigg[ \int_0^T \big\lvert 
			(\gamma^{-\frac12}(s) D^N(s))^\top \mathscr{Q}(s) \big(\scH^{N}(s) + \gamma^{\frac12}(s) \zeta(s) \big) \\
			&  \qquad \qquad \qquad - (\gamma^{-\frac12}(s) D(s))^\top \mathscr{Q}(s) \big(\scH(s) + \gamma^{\frac12}(s) \zeta(s) \big)
			\big\rvert \, ds \bigg] 
			.
		\end{split}
	\end{equation}
	Note that for all $N\in\N$ it holds that 
	$\gamma^{-\frac12} D^N$, $\gamma^{-\frac12} D$, $\scH^N$, $\scH$, and $\gamma^{\frac12} \zeta$ are in $\cL^2$ (cf.~\eqref{eq:A1}, \cref{rem:SDE_hidden_dev_and_new_state}, and \eqref{eq:int_cond_zeta}) and that $\scH^N(T)$, $\scH(T)$, and $\gamma^{\frac12}\xi$ are in $L^2(\Omega ,\cF_T,P;\R^n)$ (cf.~\cref{rem:SDE_hidden_dev_and_new_state} and \eqref{eq:int_cond_terminal_pos}). 
	Moreover, note that $\KK$ and $\mathscr{Q}$ both are 
    $dP\times ds$-a.e.\ bounded, 
	that $\lim_{N\to\infty} \lVert \gamma^{-\frac12}D^N - \gamma^{-\frac12} D \rVert_{\cL^2} = 0$ by assumption, and that \Cref{lem:convergence_SDE} implies that 
	\begin{equation*}
		\begin{split}
		\lim_{N\to\infty} \lVert \big(\scH^{N} + \gamma^{\frac12}\zeta \big) 
		- \big(\scH + \gamma^{\frac12} \zeta \big) \rVert_{\cL^2}^2 
		& \leq T \lim_{N\to\infty}  E[\sup_{s\in[0,T]} \lVert \scH^N(s) - \scH(s) \rVert_F^2 ] = 0 
		\end{split}
	\end{equation*}
	and $\lim_{N\to\infty} \lVert \big(\scH^{N}(T) + \gamma^{\frac12}(T)\xi \big) 
	- \big(\scH(T) + \gamma^{\frac12}(T) \xi \big) \rVert_{L^2}^2 = 0$. 
	Therefore, \Cref{lem:convergence_helper} and \eqref{eq:1434} establish the convergence 
    $\lim_{N\to\infty} \lvert \pmJ(X^N) - \pmJ(X) \rvert = 0$ 
	in part (i).
	
	\smallskip
	
	(ii) 
	Let $X \in \cA^{pm}$ and $u=\varphi(X)=\gamma^{-\frac12}D^X$, 
    where $D^X$ is defined in \eqref{eq:def_deviation_pm}.
	Let $L=(L(s))_{s\in[0,T]}$ be defined by~\eqref{eq:def_L_for_helper_process_Z} and let $K=(K(s))_{s\in[0,T]}$ be defined by~\eqref{eq:def_helper_process_Z}. 
	Then, \cref{cor:KS_approx_seq_finite_var_O}
	ensures that there exists a sequence of $\R^n$-valued bounded c\`adl\`ag finite-variation processes $(v^N)_{N\in\N}$ 
    such that the sequence 
    of $\R^n$-valued c\`adl\`ag semimartingales $(u^N)_{N\in\N}$ defined by 
	\begin{equation}\label{eq:defuN}
		u^N=\OO^\top K v^N, \quad N\in\N, 
	\end{equation}
    satisfies \eqref{eq:1349a} and \eqref{eq:1349b}. 
	In particular, it holds for all $N\in\N$ that $u^N\in\cL^2$. 
	Let 
	$$X^{N}=\ol{\varphi}(u^N), \quad N \in\N.$$ 
	Note that by \cref{lem:bijective} it holds for all $N\in\N$ that $X^N$ is an element of $\cA^{pm}$ and that 
    $u^N=\varphi(X^N)=\gamma^{-\frac12}D^{X^N}$, where $D^{X^N}$ is defined in \eqref{eq:def_deviation_pm}. 
	Observe that this, the fact that $u=\gamma^{-\frac12}D^X$, \cref{rem:metric_and_cLt2}, and \eqref{eq:1349a} imply that 
	\begin{equation*}
		\lim_{N\to\infty}\md(X,X^N)
		= \lim_{N\to\infty} \lVert u - u^N \rVert_{\cL^2}
		= 0 .
	\end{equation*}
	Next, note that \eqref{eq:1349b}, the fact that for all $N\in\N$ it holds 
    $u^N=\gamma^{-\frac12}D^{X^N}$, and the fact that $\sigma$ is $dP\times ds$-a.e.\ bounded prove that 
	the integrability condition~\eqref{eq:A2} is satisfied for all $N\in\N$. 
	To show that for all $N\in\N$ the process $X^N$ has finite variation, note first that 	
	\cref{lem:dynamics_of_X_when_u_semimart}
	ensures that for all $N\in\N$ there exists a process $A^N=(A^N(s))_{s\in[0,T]}$ of finite variation such that for all $s\in[0,T)$ it holds that 
	\begin{equation}\label{eq:1213d}
		\begin{split}
			dX^{N}(s) & = dA^N(s) + \gamma^{-\frac12}(s) du^N(s) 
			+ \frac12 \sum_{k=1}^m \gamma^{-\frac12}(s) \OO^\top \diagsigma_{k}(s) \OO u^N(s) dW_k(s) .
		\end{split}
	\end{equation}
	Second, integration by parts
    and~\eqref{eq:defuN}
    with the continuous process~$K$ given by \eqref{eq:def_helper_process_Z} and \eqref{eq:def_L_for_helper_process_Z} and with the c\`adl\`ag finite-variation process~$v^N$  
	imply 
	for all $N\in\N$, $s\in[0,T]$ that 
	\begin{equation*}
		\begin{split}
			du^N(s) 
			& = \OO^\top K(s) dv^N(s)  + \OO^\top (dL(s)) K(s) v^N(s) \\ 
			& = \OO^\top K(s) dv^N(s) 
			\! + \! \sum_{k=1}^m \! \frac18 \OO^\top \diagsigma_k(s) \diagsigma_{k}(s) \OO u^N(s) ds 
            - \!\sum_{k=1}^m\! \frac12 \OO^\top \diagsigma_{k}(s) \OO u^N(s) dW_k(s) .
		\end{split}
	\end{equation*}
	We combine this, \eqref{eq:1213d}, and the fact that for all $N\in\N$ it holds that $v^N$ has finite variation 
	to conclude that for all $N\in\N$ the c\`adl\`ag semimartingale $X^N$ has finite variation. 
	This proves for all $N\in\N$ that $X^N \in \cA^{fv}$.
	
	\smallskip
	
	(iii)
	Note that \eqref{eq:inf_equal} follows from (i), (ii), and \cref{cor:fvsubsetpm}. 
\end{proof}

\subsection{Proofs for \cref{sec:soln}}

\begin{proof}[Proof of \cref{propo:soln_LQ_Sun}]
	First, note that \cref{assump_filtration} ensures that $(\cF_s)_{s\in[0,T]}$ is the augmented natural filtration of the Brownian motion $W$. 
	Furthermore, \cref{rem:remark_on_setting_assumptions2} 
	and the fact that the terminal value of the BSDE, $\tfrac12 I_n$, is symmetric, show that the assumptions (A1)' and (A2) in Sun et al.\ \cite{sun2021indefiniteLQ} are satisfied. 
	Since we moreover assume that \cref{assump_convexity} is met, we can apply \cite[Theorem 9.1]{sun2021indefiniteLQ}. 
	This proves (i) and (ii). 
	Furthermore, we can show that the optimal costs are given by~\eqref{eq:optcostsLQ} (cf.\ Section~5 and the introduction of Section~6 in~\cite{sun2021indefiniteLQ}), which proves (iii).
\end{proof}

\begin{proof}[Proof of \cref{lem:closing_immed}]
	Note that all assumptions of \cref{cor:soln_opt_trade_execution} are satisfied. 
	Hence, there exist a unique solution $(\mathscr{Y},\mathscr{Z})$ of the BSDE~\eqref{eq:BSDE}, a unique solution $\scH^*$ of the SDE~\eqref{eq:SDE_optimal_state}, and a unique optimal strategy $X^*$ in 
    $\cA^{pm}$ for  
    $\pmJ$, which is given by \eqref{eq:def_opt_strat} with associated deviation $D^*$ satisfying \eqref{eq:def_opt_dev}. 
	
	1. 
	Suppose first that $d=\gamma(0) x$.  
	It then holds that $\scH^*(0)=\gamma^{-\frac12}(0)d - \gamma^{\frac12}(0)x = 0$. This and~\eqref{eq:SDE_optimal_state} show that $\scH^*\equiv 0$. 
	It thus follows from \eqref{eq:def_opt_strat} that 
	$X^*(0-)=x$, $X^*(s)=0$, $s\in[0,T]$.
	Moreover, the fact that $\scH^*\equiv 0$ and \eqref{eq:def_opt_dev} imply that $D^*(s)=0$, $s\in[0,T)$. We obtain that \eqref{eq:A2} is satisfied. Combined with the fact that $X^* \in \cA^{pm}$ has finite variation, this shows that $X^* \in \cA^{fv}$. 
	
	2. 
	Suppose now that $\rho\equiv 0$. 
	Observe that if $\mathscr{Y}\equiv \frac12 I_n$ and $\mathscr{Z}\equiv 0$, then the driver~$g$ in~\eqref{eq:driver_of_BSDE} of the BSDE~\eqref{eq:BSDE} equals $0$. 
	Hence, $(\mathscr{Y},\mathscr{Z})=(\tfrac12 I_n,0)$ is a solution of the BSDE~\eqref{eq:BSDE}. 
	Therefore, we have that $\theta\equiv I_n$ in~\eqref{eq:def_theta}. 
	We hence obtain from \eqref{eq:def_opt_strat} that 
	$X^*(0-)=x$, $X^*(s)=0$, $s\in[0,T]$. 
	Further, \eqref{eq:def_opt_dev} shows that 
	$D^*(s)=\gamma^{\frac12}(s) \scH^*(s)$, $s\in[0,T)$. 
	This and the fact that $\sigma$ is $dP\times ds$-a.e.\ bounded ensure that there exists $c\in (0,\infty)$ such that 
	\begin{equation*}
		\begin{split}
			& \sum_{k=1}^m E\bigg[ \bigg( \int^T_0 \Big\lvert \big( \gamma^{-\frac12}(s) D^X(s)\big)^\top \OO^\top \diagsigma_{k}(s) \OO \gamma^{-\frac12}(s) D^X(s) \Big\rvert^2 ds \bigg)^{\frac12} \bigg]
			\\
			& \le 
			\sum_{k=1}^m E\bigg[ \bigg( \int^T_0 c^2 \lVert \scH^*(s) \rVert_F^4 ds \bigg)^{\frac12} \bigg]
			\le  mc T^{\frac12}
			E\Big[ \sup_{s\in[0,T]} \lVert \scH^*(s) \rVert_F^2 \Big] 
			.
		\end{split}
	\end{equation*}
	Therefore, \cref{rem:SDE_hidden_dev_and_new_state} guarantees that~\eqref{eq:A2} is satisfied. Since $X^*$ moreover has finite variation, we conclude that  $X^* \in \cA^{fv}$. 
\end{proof}

\vspace{-1cm}

\section*{}
\addcontentsline{toc}{section}{References}
\bibliographystyle{abbrv}
\bibliography{literature}

\appendix

\setcounter{equation}{0}
\renewcommand{\theequation}{\thesection.\arabic{equation}}

\section{On the price impact and the resilience}
\label{sec:appendixontheprice}

In this section we comment on our set-up in \cref{sec:multi_asset_fv}. 
We first justify our assumption of a symmetric price impact by the following example. 

\begin{ex}\label{ex:gamma_sym}
	Let $\wt\gamma \in \R^{n\times n}$ be non-symmetric; without loss of generality, assume that $\wt\gamma_{1,2}\ne \wt\gamma_{2,1}$. 	
	Moreover, let 
	$d=0=x$.
	We show that in this setting we can produce arbitrarily large negative costs in~\eqref{eq:def_pathwisecosts}. 
	The approach is to, for $N\in\N$, quickly execute the following sequence of trades: 
	\begin{itemize}
		\item[1.]
		Buy~$N$ shares in the first asset.
		
		\item[2.] 
		Buy (or sell) one share in the second asset.
		
		\item[3.] 
		Sell~$N$ shares in the first asset.
		
		\item[4.] 
		Sell (or buy) one share in the second asset. 
	\end{itemize}
	Denote for $j\in\{1,\ldots,n\}$ by $e_j \in \R^n$ the $j$-th unit vector in $\R^n$. 
	Let 
	$$a=\sgn( e_1^\top (\wt\gamma - \wt\gamma^\top)  e_2 ).$$ 
	For every $N\in\N$, $h \in (0,T/3]$ let $X^{h,N}=(X^{h,N}(s))_{s\in[0,T]}$ 
	be defined by $X^{h,N}(0-)=0$ and   
	\begin{equation*}
		\begin{split}
			X^{h,N}(s) 
			& = 
			N e_1 
			1_{[0,T]}(s)
			+ 
			a e_2
			1_{[h,T]}(s) 
			- N e_1 
			1_{[2h,T]}(s)
			-a e_2
			1_{[3h,T]}(s), \quad s \in [0,T] .
		\end{split}
	\end{equation*}
	Note that these are c\`adl\`ag functions of finite variation. 
	For every $N\in\N$, $h \in (0,T/3]$ it follows from, for example, \cref{rem:soln_for_deviation_SDE} (note that \cref{rem:soln_for_deviation_SDE} is still valid with $\gamma$ replaced by $\wt\gamma$) that the process $D^{h,N}=(D^{h,N}(s))_{s\in[0,T]}$ associated to $X^{h,N}$ via~\eqref{eq:deviationdynmultivariate}  
	is given by $D^{h,N}(0-)=0$ and  
	\begin{equation*}
		\begin{split}
			D^{h,N}(s) & = \nu^{-1}(s) \sum_{k=0}^3 \nu(kh) \wt\gamma \Delta X^{h,N}(kh) 1_{[kh,T]}(s) , 
			\quad s\in[0,T].
		\end{split}
	\end{equation*}
	Next, note that it holds for all $N\in\N$, $h \in (0,T/3]$ that 
	\begin{equation}\label{eq:gammasym_1}
		\begin{split}
			& \int_{[0,T]} (D^{h,N}(s-))^\top dX^{h,N}(s) + \frac12 \int_{[0,T]} (\Delta X^{h,N}(s))^\top \wt\gamma dX^{h,N}(s) \\
			& = \sum_{k=0}^3 \Big( (D^{h,N}(kh-))^\top \Delta X^{h,N}(kh) + \frac12 (\Delta X^{h,N}(kh))^\top \wt\gamma \Delta X^{h,N}(kh)  \Big) \\
			& =  \sum_{j=1}^3 \sum_{k=0}^{j-1} (\Delta (X^{h,N}(kh))^{\top} \wt\gamma^\top (\nu(kh))^{\top} (\nu^{-1}(jh))^{\top} \Delta X^{h,N}(jh) \\
			& \quad + \frac12 \sum_{j=0}^3 (\Delta X^{h,N}(jh))^\top \wt\gamma \Delta X^{h,N}(jh) .
		\end{split}
	\end{equation}
	Observe for all $N\in\N$, $h \in (0,T/3]$ that 
	\begin{equation}\label{eq:gammasym_2}
		\begin{split}
			\sum_{j=0}^3 (\Delta X^{h,N}(jh))^\top \wt\gamma \Delta X^{h,N}(jh) 
			& = 2 N^2 e_1^\top \wt\gamma e_1 
			+ 2 a^2 e_2^\top \wt\gamma e_2 .
		\end{split}
	\end{equation}
	We moreover have for all $N\in\N$, $h \in (0,T/3]$ that
	\begin{equation}\label{eq:gammasym_4}
		\begin{split}
			& \sum_{j=1}^3 \sum_{k=0}^{j-1} (\Delta X^{h,N}(kh))^{\top} \wt\gamma^\top (\nu(kh))^{\top} (\nu^{-1}(jh))^{\top} \Delta X^{h,N}(jh) \\
			& = N a 
			\bigg( e_1^\top \wt\gamma^\top (\nu(0))^\top (\nu^{-1}(h))^\top e_2 
			- e_2^\top \wt\gamma^\top (\nu(h))^\top (\nu^{-1}(2h))^\top e_1 \\
			& \qquad \qquad - e_1^\top \wt\gamma^\top (\nu(0))^\top (\nu^{-1}(3h))^\top e_2 
			+ e_1^\top \wt\gamma^\top (\nu(2h))^\top (\nu^{-1}(3h))^\top e_2 
			\bigg) \\
			& \quad - N^2 e_1^\top \wt\gamma^\top (\nu(0))^\top (\nu^{-1}(2h))^\top e_1
			- a^2 e_2^\top \wt\gamma^\top (\nu(h))^\top (\nu^{-1}(3h))^\top e_2 .
		\end{split}
	\end{equation}
	Combining \eqref{eq:gammasym_1}, \eqref{eq:gammasym_2},  and \eqref{eq:gammasym_4} yields 
	for all $N\in\N$, $h \in (0,T/3]$ that
	\begin{equation}\label{eq:1601}
		\begin{split}
			& \int_{[0,T]} (D^{h,N}(s-))^\top dX^{h,N}(s) 
			+ \frac12 \int_{[0,T]} (\Delta X^{h,N}(s))^\top \wt\gamma dX^{h,N}(s) \\
			& = 
			N^2 \left( e_1^\top \wt\gamma e_1 
			- e_1^\top \wt\gamma^\top (\nu(0))^\top (\nu^{-1}(2h))^\top e_1 \right) 
			+  
			a^2 \left( e_2^\top \wt\gamma e_2
			- e_2^\top \wt\gamma^\top (\nu(h))^\top (\nu^{-1}(3h))^\top e_2 \right) \\
			& \quad + 
			N a
			\bigg( e_1^\top \wt\gamma^\top (\nu(0))^\top (\nu^{-1}(h))^\top e_2 
			- e_2^\top \wt\gamma^\top (\nu(h))^\top (\nu^{-1}(2h))^\top e_1 \\
			& \quad \qquad \qquad - e_1^\top \wt\gamma^\top (\nu(0))^\top (\nu^{-1}(3h))^\top e_2 
			+ e_1^\top \wt\gamma^\top (\nu(2h))^\top (\nu^{-1}(3h))^\top e_2 
			\bigg) .
		\end{split}
	\end{equation}
	Since $\nu$ and $\nu^{-1}$ are continuous, it follows for all $N\in\N$ that $P$-a.s. 
	\begin{equation}\label{eq:2228}
		\begin{split}
			& \lim_{h\downarrow 0} \int_{[0,T]} (D^{h,N}(s-))^\top dX^{h,N}(s) 
			+ \frac12 \int_{[0,T]} (\Delta X^{h,N}(s))^\top \wt\gamma dX^{h,N}(s) \\
			& = 
			N a 
			\left(- e_2^\top \wt\gamma^\top e_1 
			+ e_1^\top \wt\gamma^\top e_2 
			\right) 
			= - Na
			\left( e_1^\top (\wt\gamma-\wt\gamma^\top) e_2 
			\right) 
			.
		\end{split}
	\end{equation}
	Note that for all $p\in[1,\infty)$ it holds $E[\sup_{s\in[0,T]} \lVert \nu_s \rVert_F^p]  <\infty$ and $E[\sup_{s\in[0,T]} \lVert \nu_s^{-1} \rVert_F^p]<\infty$. 
	This, \eqref{eq:1601}, \eqref{eq:2228}, Hölder's inequality, and the dominated convergence theorem
	show for all $N\in\N$ that 
	\begin{equation*}
		\begin{split}
			& \lim_{h\downarrow 0} \!
			E_0\!\bigg[\! \int_{[0,T]} \! \! (D^{h,N}(s-))^\top dX^{h,N}(s) 
			+ \frac12 \! \int_{[0,T]} \!\! (\Delta X^{h,N}(s))^\top \wt\gamma dX^{h,N}(s) \bigg] 
			\! 
			= \! - N  a e_1^\top (\wt\gamma-\wt\gamma^\top) e_2 .
		\end{split}
	\end{equation*}
	The fact that $a e_1^\top (\wt\gamma - \wt\gamma^\top)  e_2 >0$ demonstrates that 
	the right-hand side of this equation tends to $-\infty$ as $N\to\infty$.
	Thus, by trading faster ($h\downarrow0$) and larger volumes ($N\to\infty$), we in this example can generate arbitrarily large negative costs. 
\end{ex}

\begin{remark}
	Note that there are observations similar to \cref{ex:gamma_sym}  in the literature on price impact models.
	For example, Schneider \& Lillo show in  \cite[Lemma~3.9]{SchneiderLillo2019} 
	that in their multivariate transient impact model with linear price impact function, symmetry of the price impact is a necessary condition for the absence of dynamic arbitrage. 
	Moreover, if in the non-linear multivariate transient impact model of Hey et al.\ \cite{hey2024concave} we choose $L$ to be the identity matrix and $h$ to be the identity function (linear impact), then  \cite[Lemma~5.2]{hey2024concave} 
	yields that to avoid price manipulation their deterministic invertible matrix~$\Lambda$, which corresponds to our price impact~$\gamma$, needs to be symmetric. 
	Huberman \& Stanzl, for example,  
	illustrate in \cite[Section~5]{huberman2004price} 
	that asymmetric price impact in their multivariate (non-transient) price impact setting can lead to price manipulation and they show 
	that it is necessary and sufficient for the absence of price manipulation in their model that their price impact function is linear and represented by a symmetric positive semidefinite matrix. 
	Alfonsi et al., for instance, derive in \cite{AlfonsiKloeckSchied2016} 
	that their decay kernel, which incorporates price impact and resilience in their multi-asset model, should be, among others, symmetric positive semidefinite to guarantee desirable properties of the model (see \cite[Section~4]{AlfonsiKloeckSchied2016}). 
	To summarize, the assumption that the price impact (in the sense of the respective model) is symmetric is a usual one in the literature, and it is also rather common to make a definiteness assumption.
	In our model we consider a stochastically evolving price impact $\gamma$ and we further specify its dynamics. Our definition of $\gamma$ implies symmetry and positive definiteness of $\gamma$; for details we refer to \cref{sec:setting}, and to \cref{rem:remark_on_setting_assumptions1} in particular.  
\end{remark}

In the next example we illustrate how in our model the resilience affects the price deviation. 

\begin{ex}\label{ex:resilience_effect}
	Let $n=2$, 
	$t_1\in(0,T)$, $x\in\R^n$, $d=0\in\R^n$, and 
	\begin{equation*}
		\rho = 
		\begin{pmatrix}
			\rho_1 & \rho_3\\
			\rho_3 & \rho_1
		\end{pmatrix}
	\end{equation*}
	where $\rho_1 \in (0,\infty)$ and $\rho_3 \in \R$. 
	We assume that $\rho_1^2>\rho_3^2$, which ensures that $\rho$ is positive definite. 
	In order to explore the resilience effect in our model, we suppose that there is a block trade at the time\footnote{The choice $t_0=0$ is just to lighten notation. For general $t_0\in [0,t_1)$, replace $r$ by $r-t_0$ where appropriate.}~$t_0=0$ and we study the development of the deviation after this block trade in a period $(t_0,t_1]$ without new trades.   
	More specifically, we consider an $\R^n$-valued c\`adl\`ag finite-variation process $X=(X(s))_{s\in[0,T]}$ with $X(0-)=x$, $\Delta X(0)\ne 0$, and $X(s)=X(0)$, $s\in (0,t_1]$. 
	The associated process $D^X=(D^X(s))_{s\in[0,T]}$ defined by~\eqref{eq:deviationdynmultivariate} 
	satisfies for all $r \in [0,t_1]$ that 
	$D^X(r) = \nu^{-1}(r) \gamma(0) \Delta X(0)$  
	(cf.~\cref{rem:soln_for_deviation_SDE}). 
	Note that it holds for all $r \in [0,T]$ that $\nu^{-1}(r) =e^{-\rho r}$. 
	By diagonalizing $\rho$ we obtain for all $r \in [0,t_1]$ that 
	\begin{equation}\label{eq:15351}
		\begin{split}
			D^X(r) & = 
			\frac12 e^{-\rho_1 r}
			\begin{pmatrix}
				e^{-\rho_3 r} + e^{\rho_3 r} \\
				e^{-\rho_3 r} - e^{ \rho_3r}
			\end{pmatrix}
			D^X_1(0) 
			+ \frac12 e^{-\rho_1 r}
			\begin{pmatrix}
				e^{-\rho_3 r} - e^{\rho_3 r} \\
				e^{-\rho_3 r} + e^{\rho_3 r}
			\end{pmatrix}
			D^X_2(0) 
			. 
		\end{split}
	\end{equation}
	Hence, if $\rho_3\ne 0$ and $D^X_2(0)\ne 0$, then the deviation $D^X_1$ in the first asset is computed not only from the deviation $D_1^X(0)$ in the first asset immediately after the block trade, but also depends on the deviation $D_2^X(0)$ in the second asset immediately after the block trade. 
	
	Let us focus on the first component\footnote{Analogous comments hold for the second component by symmetry.} of~\eqref{eq:15351}, that is, $(D_1^X(r))_{r\in[0,t_1]}$, and assume that\footnote{Note that changing the sign of $\rho_3$ has the same effect on $D^X_1$ as changing the sign of $D_2^X(0)$.} $\rho_3>0$. 
	Observe that the function $[0,\infty) \ni r \mapsto e^{-\rho_3 r} + e^{\rho_3 r} \in [2,\infty)$ is increasing. 
	This means that the usual resilience effect (described by $[0,t_1] \ni r\mapsto e^{-\rho_1 r}$) that acts on the deviation $D_1^X(0)$ in the first asset immediately after the block trade is decelerated 
	with increasing time $r\in[0,t_1]$. 
	At the same time, we have the additional term 
	\begin{equation}\label{eq:1632}
		\frac12 e^{-\rho_1 r} (e^{-\rho_3 r} - e^{\rho_3 r})D_2^X(0), \quad r\in[0,t_1]. 
	\end{equation}
	It holds for all $r\in[0,t_1]$ that $\lvert e^{-\rho_3 r} - e^{ \rho_3 r} \rvert \le e^{-\rho_3 r} + e^{ \rho_3 r}$.  
	Therefore, the factor in front of $D_2^X(0)$ in~\eqref{eq:1632} has at most the magnitude of the factor in front of $D_1^X(0)$.
	We can show that $[0,\infty) \ni r \mapsto e^{-\rho_1 r} (e^{-\rho_3 r} - e^{\rho_3 r})$ is $0$ at $r=0$, is non-positive, has a minimum at $r_0=\frac{1}{2\rho_3}\ln(\frac{\rho_1+\rho_3}{\rho_1-\rho_3})$, is decreasing on $(0,r_0)$, is increasing on $(r_0,\infty)$, and tends to $0$ as $r\to\infty$.  
	In particular, the additional term~\eqref{eq:1632} has the opposite sign of~$D_2^X(0)$. 
	The shape of the function $[0,\infty) \ni r \mapsto e^{-\rho_1 r} (e^{-\rho_3 r} - e^{\rho_3 r})$ (see also the orange 
	curve in \cref{fig:resilience_effect_trade_in_first})
	\begin{SCfigure}
		\caption{The deviation in the first and the second asset after a block buy trade of size $10$ at the time $0$ in the first asset. The setting is $t_1=5$, $\rho_1=2$, $\rho_3=1$, $\gamma(0)=I_2$, and $\Delta X(0)=(10,0)^\top$ within \cref{ex:resilience_effect}.}
		\label{fig:resilience_effect_trade_in_first}
		\includegraphics[scale=0.75]{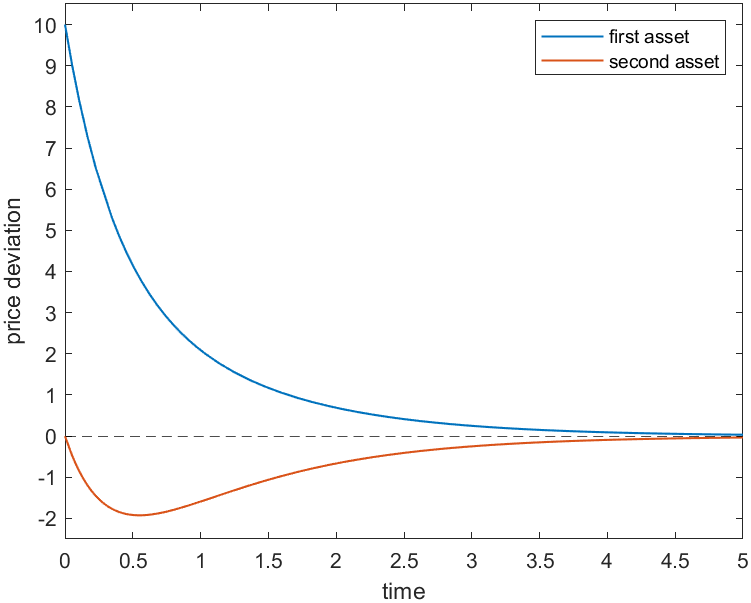}
	\end{SCfigure}
	indicates that 
	the influence of $D_2^X(0)$ on the deviation~$D_1^X$ in the first asset first has to build up and 
	from the time~$r_0$ on then reverts to~$0$.
	
	In \cref{fig:resilience_effect_trade_in_both} 
	\begin{figure}
		\caption{The deviation in the first asset after a block trade $\Delta X(0)=(3,1)^\top$ (topleft), $\Delta X(0)=(1,3)^\top$ (topright), $\Delta X(0)=(3,-1)^\top$ (bottomleft), and $\Delta X(0)=(1,-3)^\top$ (bottomright) at the time $0$. 
			The setting is $t_1=5$, $\rho_1=2$, $\rho_3=1$, and $\gamma(0)=I_2$ within \cref{ex:resilience_effect}. The brown lines indicate the deviation in the first asset without cross-resilience.}
		\label{fig:resilience_effect_trade_in_both}
		\includegraphics[scale=0.7]{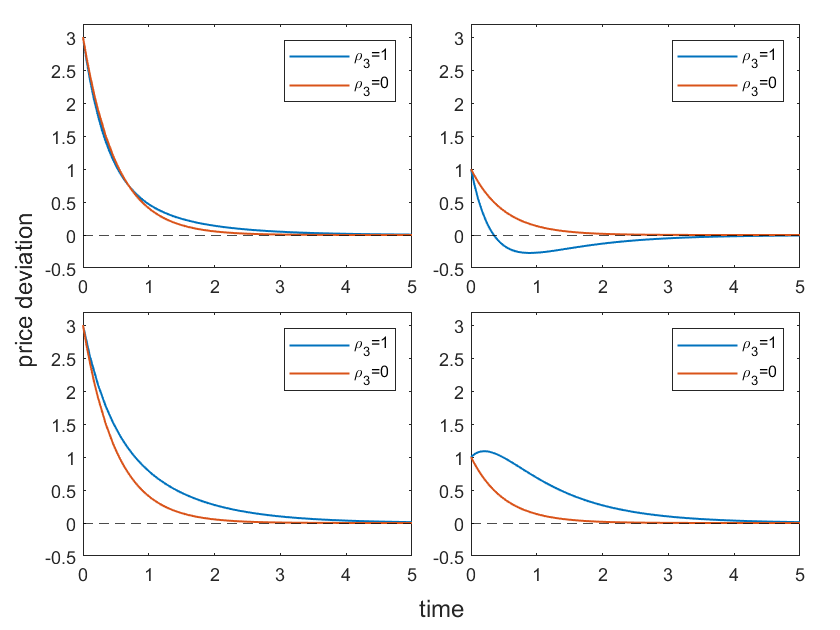}
	\end{figure}
	we visualize the deviation $(D_1^X(r))_{r\in[0,t_1]}$ in the first asset when $\rho_1=2$, $\rho_3\in\{0,1\}$, $\gamma(0)=I_2$, and $\Delta X(0)=(3,1)^\top$, $\Delta X(0)=(1,3)^\top$, $\Delta X(0)=(3,-1)^\top$, or $\Delta X(0)=(1,-3)^\top$, respectively.
	We see from this figure that in the case $\Delta X(0)=(3,1)^\top$ the decay to zero of the deviation in the first asset is slightly accelerated in the beginning and slowed down afterwards in comparison to the situation without cross-resilience. 
	If $\Delta X(0)=(3,-1)^\top$, then the speed of the decay to zero is decreased.
	The shape of the deviation in the first asset is more strongly affected in the case when the size of the trade in the second asset dominates the one in the first asset. 
	In the example with $\Delta X(0)=(1,3)^\top$, we see that the deviation in the first asset becomes negative after a short period of time and then approaches zero from below, that means, the price in the first asset essentially drops below the price that was valid prior to the buy trade $\Delta X_1(0)=1$. 
	In contrast, if $\Delta X(0)=(1,-3)^\top$, then the deviation in the first asset increases for a short period of time before the decay to zero (from above) begins.
	
	The influence of a trade in only the first asset on the deviation $D^X$ in both assets, assuming that $\rho_1=2$, $\rho_3=1$, $\gamma(0)=I_2$, and $\Delta X(0)=(10,0)^\top$, is shown in \cref{fig:resilience_effect_trade_in_first}. 
\end{ex}

\section{Solving the trade execution problem in the case of non-vanishing targets}
\label{sec:appendixsolving}

In \cref{sec:zerotargets} we provide a solution to the LQ stochastic control problem of \cref{sec:equivLQ} in the case of zero targets $\xi=0$ and $\zeta=0$. 
In this appendix, we consider general terminal targets~$\xi$ and general running targets~$\zeta$ satisfying \eqref{eq:int_cond_terminal_pos} and \eqref{eq:int_cond_zeta}. 
We develop a rigorous solution theory which requires a transformation of the control problem removing the cross-terms (products between the control $u$ and the state $\scH^{u}$) and relies on results from Kohlmann \& Tang \cite{kohlmann2003minimization}
combined with results from Sun et al.~\cite{sun2021indefiniteLQ}. 
As an alternative, we outline in the next remark a solution approach that 
uses results from \cite{sun2021indefiniteLQ} and further arguments to be developed. 

\begin{remark}\label{rem:gen_targets_outline}
	We start with the ansatz that for all $h\in \R^n$ and $t\in [0,T]$ it holds almost surely that  
	\begin{equation}\label{eq:gen_quad_form_gen_targets}
		\essinf_{u\in \cL_t^2} \LQJ_t(h,u) = \langle h, \mathscr{Y}(t) h\rangle + \langle \psi(t), h\rangle + V^0(t)
	\end{equation}
	with progressively measurable processes $\mathscr{Y}$, $\psi$, and $V^0$ of appropriate dimensions (looking at a dynamic version of the cost functional).
	It can then be shown that $\mathscr{Y}$ is characterized as the first component of the solution  $(\mathscr{Y}, \mathscr{Z})$ of the Riccati-BSDE~\eqref{eq:BSDE} with the driver given by~\eqref{eq:driver_of_BSDE}. Similarly to \cref{sec:zerotargets}, the results of \cite{sun2021indefiniteLQ} can be used to ensure the existence of $(\mathscr{Y}, \mathscr{Z})$. Next, one can establish that $\psi$ is characterized by a linear BSDE (whose driver depends nonlinearly on $(\mathscr{Y}, \mathscr{Z})$). The next goal is to identify appropriate conditions that ensure the existence of $\psi$. Finally, $V^0$ is determined as the conditional expectation of a time integral (involving $\mathscr{Y}$ and $\psi$ as integrands). Once the existence of $\mathscr{Y}$, $\psi$, and $V^0$ is verified, one can proceed with the classical verification argument in LQ stochastic optimal control, employing Itô’s formula and the completion of squares, to establish the representation \eqref{eq:gen_quad_form_gen_targets}.
\end{remark}

Next, we develop the approach primarily based on \cite{kohlmann2003minimization}. It basically follows the procedure outlined in \cref{rem:gen_targets_outline}, but first transforms the LQ stochastic control problem defined by \eqref{eq:def_cost_fct_LQ} and \eqref{eq:def_scH_u} into an equivalent formulation without cross-terms. This transformation is performed under the following assumption.

\begin{assumption}\label{ass:procB}
	Assume 
	that there exists an $\R^{n\times n}$-valued progressively measurable, $dP\times ds$-a.e.\ bounded process $\procB=(\procB(s))_{s\in[0,T]}$ such that 
	$\KK \procB = \mathscr{Q}$ $dP\times ds$-a.e. 
\end{assumption}

\begin{remark}
	Note that if there exists $\varepsilon\in (0,\infty)$ such that $\KK-\varepsilon I_n$ is $\cS_{\ge 0}^n$-valued, then \cref{ass:procB} holds with $\procB=\KK^{-1} \mathscr{Q}$. 
	In general, the process in \cref{ass:procB}, if it exists, is not necessarily unique (up to $dP\times ds$-null sets). 
	When we assume that \cref{ass:procB} holds, we choose and fix a process $\procB$ with the properties detailed in \cref{ass:procB} and use it for all objects involved.
	Observe that in the case without a risk term, that is, $\mathscr{Q}=0$, we have that \cref{ass:procB} is satisfied with $\procB=0$.
\end{remark}

\subsection{An equivalent LQ stochastic control problem without cross-terms}
\label{sec:equivLQnocross}

Under \cref{ass:procB}, and given $u\in\cL^2$, we make the linear transformation $u \mapsto \hat{u}:= u - \procB (\scH^{u} + \gamma^{\frac12} \zeta)$ and take $\scH^{u}$ to be the state process also for $\hat{u}$. 
Therefore, given $\hat{u}\in\cL^2$, we introduce (under \cref{ass:procB}) the controlled SDE 
\begin{equation}\label{eq:def_scHhat}
	\begin{split}
		d\scHhat^{\hat{u}}(s) & = 
		[\mathscr{A}(s) + \mathscr{B}(s) \procB(s) ] \scHhat^{\hat{u}}(s) ds 
		+ \mathscr{B}(s) \hat{u}(s) ds 
		+ \mathscr{B}(s) \procB(s) \gamma^{\frac12}(s) \zeta(s) ds 
		\\
		& \quad 
		+ \sum_{k=1}^m \mathscr{C}^k(s) (I_n - 2 \procB(s)) \scHhat^{\hat{u}}(s) dW_k(s) 
		- 2 \sum_{k=1}^m \mathscr{C}^k(s) \hat{u}(s) dW_k(s)  \\
		& \quad 
		- 2 \sum_{k=1}^m \mathscr{C}^k(s) \procB(s) \gamma^{\frac12}(s) \zeta(s) dW_k(s), \quad s\in[0,T],\\
		\scHhat^{\hat{u}}(0) & = \gamma^{-\frac12}(0) d - \gamma^{\frac12}(0) x. 
	\end{split}
\end{equation}

\begin{remark}\label{rem:SDE_state_transformed}
	Assume that \cref{ass:procB} holds and 
	let $\hat{u}\in\cL^2$.
	Observe that the coefficients in \eqref{eq:def_scHhat} are $dP\times ds$-a.e.\ bounded and that $\mathscr{B} \procB \gamma^{\frac12} \zeta$ and $\mathscr{C}^k \procB \gamma^{\frac12} \zeta$, $k\in\{1,\ldots,m\}$, are in $\cL^2$ due to \eqref{eq:int_cond_zeta}. Hence, 
	there exists a unique solution $\scHhat^{\hat{u}} = (\scHhat^{\hat{u}}(s))_{s\in[0,T]}$ of the SDE~\eqref{eq:def_scHhat} 
	and it holds that $E[\sup_{s\in[0,T]} \lVert\scHhat^{\hat{u}}(s) \rVert_F^2]<\infty$ (cf., for example, \cite[Theorem 3.2.2 \& Theorem 3.3.1]{zhang}).
\end{remark}

Furthermore, under \cref{ass:procB} and for $\hat{u}\in\cL^2$ and $\scHhat^{\hat{u}}$ given by \eqref{eq:def_scHhat}, we define 
\begin{equation}\label{eq:def_cost_fct_LQ_hat}
	\begin{split}
		\LQJhat( \hat{u} ) 
		& = \frac12 E\big[ \big(\scHhat^{\hat{u}}(T)+\gamma^{\frac12}(T)\xi \big)^\top \big(\scHhat^{\hat{u}}(T)+\gamma^{\frac12}(T) \xi \big) \big] 
		+ E\bigg[ \int_0^T (\hat{u}(s))^\top 
		\KK(s) 
		\hat{u}(s)  ds \bigg] \\
		& \quad 
		+ E\bigg[ \int_0^T \big(\scHhat^{\hat{u}}(s) + \gamma^{\frac12}(s) \zeta(s) \big)^\top \mathscr{Q}(s) ( I_n - \procB(s)) \big(\scHhat^{\hat{u}}(s) + \gamma^{\frac12}(s) \zeta(s) \big) ds \bigg] 
		. 
	\end{split}
\end{equation} 
Note that the cost functional~\eqref{eq:def_cost_fct_LQ_hat} does not contain cross-terms. 
It is a reformulation of the cost functional~\eqref{eq:def_cost_fct_LQ}:

\begin{lemma}\label{lem:trafouvsuhat}
	Let \cref{ass:procB} be in force. 
	
	(i) 
	Let $u \in\cL^2$ and let $\scH^{u}$ be the associated solution of \eqref{eq:def_scH_u} for $u$. 
	Then, $\hat u =(\hat{u}(s))_{s \in [0,T]}$ defined by 
	$\hat{u}(s) = u(s) - \procB(s) (\scH^u(s) + \gamma^{\frac12}(s) \zeta(s) )$, $s\in[0,T]$, 
	is in $\cL^2$, and it holds that $\scHhat^{\hat u}=\scH^u$ and 
	$\LQJ(u ) = \LQJhat(\hat u )$.
	
	(ii) 
	Let $\hat{u} \in\cL^2$ and let $\scHhat^{\hat{u}}$ be the associated solution of \eqref{eq:def_scHhat} for $\hat{u}$.
	Then, $u =(u(s))_{s \in [0,T]}$ defined by 
	$u(s) = \hat{u}(s) + \procB(s) (\scHhat^{\hat{u}}(s) + \gamma^{\frac12}(s) \zeta(s) )$, $s\in[0,T]$, 
	is in $\cL^2$, and it holds that $\scHhat^{\hat u}=\scH^u$ and 
	$\LQJ( u ) = \LQJhat( \hat u )$.
\end{lemma}

\begin{proof}
	(i) 
	The assumption that $F$ is bounded, the fact that $u \in \cL^2$, \eqref{eq:int_cond_zeta}, and \cref{rem:SDE_hidden_dev_and_new_state} establish that $\hat{u} \in \cL^2$. 
	Moreover, we obtain from \eqref{eq:def_scH_u} and $u = \hat{u} + \procB (\scH^u + \gamma^{\frac12} \zeta )$ that 
	\begin{equation*}
		\begin{split}
			d\scH^u(s) & = 
			\mathscr{A}(s) \scH^u(s) ds 
			+ \mathscr{B}(s) \hat{u}(s) ds 
			+ \mathscr{B}(s) \procB(s) \scH^u(s) ds 
			+ \mathscr{B}(s) \procB(s) \gamma^{\frac12}(s) \zeta(s) ds \\
			& \quad 
			+ \sum_{k=1}^m \mathscr{C}^k(s) \scH^u(s) dW_k(s) 
			- 2 \sum_{k=1}^m \mathscr{C}^k(s) \hat{u}(s) dW_k(s) \\
			& \quad
			- 2 \sum_{k=1}^m \mathscr{C}^k(s) \procB(s) \scH^u(s) dW_k(s)  
			- 2 \sum_{k=1}^m \mathscr{C}^k(s) \procB(s) \gamma^{\frac12}(s) \zeta(s) dW_k(s) , 
			\, s\in[0,T].
		\end{split}
	\end{equation*}
	Hence, $\scH^u=\scHhat^{\hat{u}}$ (cf.~\cref{rem:SDE_state_transformed}).
	Furthermore, combining this, $\hat{u} = u - \procB (\scH^u + \gamma^{\frac12} \zeta)$, and \cref{ass:procB} we have that 
	\begin{equation*}
		\begin{split}
			& \hat{u}^\top \KK \hat{u}
			+ \big(\scHhat^{\hat{u}} + \gamma^{\frac12} \zeta \big)^\top \mathscr{Q} ( I_n - \procB) \big(\scHhat^{\hat{u}} + \gamma^{\frac12} \zeta \big) \\
			& = u^\top \KK u
			- 2 u^\top \KK \procB (\scH^u + \gamma^{\frac12} \zeta) 
			+ (\scH^u + \gamma^{\frac12} \zeta)^\top \procB^\top \KK \procB (\scH^u + \gamma^{\frac12} \zeta) \\
			& \quad + \big(\scHhat^{\hat{u}} + \gamma^{\frac12} \zeta \big)^\top \mathscr{Q} ( I_n - \procB) \big(\scHhat^{\hat{u}} + \gamma^{\frac12} \zeta \big) \\
			& = u^\top \KK u
			- 2 u^\top \mathscr{Q} (\scH^u + \gamma^{\frac12} \zeta) 
			+ (\scH^u + \gamma^{\frac12} \zeta)^\top \procB^\top \mathscr{Q} (\scH^u + \gamma^{\frac12} \zeta) \\
			& \quad + \big(\scH^{u} + \gamma^{\frac12} \zeta \big)^\top \mathscr{Q} \big(\scH^{u} + \gamma^{\frac12} \zeta \big)
			- \big(\scH^{u} + \gamma^{\frac12} \zeta \big)^\top \mathscr{Q} \procB \big(\scH^{u} + \gamma^{\frac12} \zeta \big) \\
			& = u^\top \KK u
			+ \big(\scH^u + \gamma^{\frac12} \zeta \big)^\top \mathscr{Q} \big(\scH^u + \gamma^{\frac12} \zeta \big) 
			- 2 u^\top \mathscr{Q} \big(\scH^u + \gamma^{\frac12} \zeta \big) .
		\end{split}
	\end{equation*}
	Therefore, \eqref{eq:def_cost_fct_LQ},  \eqref{eq:def_cost_fct_LQ_hat}, and the fact that $\scH^u=\scHhat^{\hat{u}}$ yield that 
	$\LQJ(u ) = \LQJhat(\hat u ).$
	
	(ii) This claim follows from similar arguments as in (i). 
\end{proof}

From \cref{lem:trafouvsuhat} we obtain that the control problems pertaining to $\LQJhat$ and $\LQJ$ are equivalent in the following sense. 

\begin{corollary}\label{cor:uoptimaliffuhatoptimal}
	Let \cref{ass:procB} be in force. 
	
	(i) It holds a.s.\ that 
	\begin{equation*}
		\begin{split}
			\essinf_{u\in\cL^2} \LQJ(u)
			& = \essinf_{\hat{u}\in\cL^2} \LQJhat( \hat{u}) .
		\end{split}
	\end{equation*}
	
	(ii)
	Suppose that $u^*=(u^*(s))_{s \in [0,T]} \in \cL^2$ minimizes $\LQJ$ over $\cL^2$ and let $\scH^{u^*}$ be the associated solution of \eqref{eq:def_scH_u} for $u^*$.
	Then, $\hat{u}^* =(\hat{u}^*(s))_{s \in [0,T]}$ defined by 
	$$\hat{u}^*(s) = u^*(s) - \procB(s) (\scH^{u^*}(s) + \gamma^{\frac12}(s) \zeta(s) ), \quad s\in[0,T],$$ 
	minimizes $\LQJhat$ over $\cL^2$. 
	
	(iii)
	Suppose that $\hat{u}^*=(\hat{u}^*(s))_{s \in [0,T]} \in \cL^2$ minimizes $\LQJhat$ over $\cL^2$ and let $\scHhat^{\hat{u}^*}$ be the associated solution of \eqref{eq:def_scHhat} for $\hat{u}^*$.
	Then, $u^* =(u^*(s))_{s \in [0,T]}$ defined by 
	$$u^*(s) = \hat{u}^*(s) + \procB(s) (\scHhat^{\hat{u}^*}(s) + \gamma^{\frac12}(s) \zeta(s) ), \quad s\in[0,T],$$ 
	minimizes $\LQJ$ over $\cL^2$. 
	
	(iv)
	There exists a $dP\times ds$-a.e.\ unique minimizer of $\LQJ$ in $\cL^2$ if and only if there exists a $dP\times ds$-a.e.\ unique minimizer of $\LQJhat$ in $\cL^2$. 
\end{corollary}

\subsection{Solution of the LQ stochastic control problem without cross-terms}\label{sec:general_targets_soln}

We introduce a matrix-valued BSDE of Riccati type for the LQ problem of \cref{sec:equivLQnocross} (assuming \cref{assump_filtration} and \cref{ass:procB}):
\begin{equation}\label{eq:BSDE_Riccati_KT}
	\begin{split}
		d \widehat{\mathscr{Y}}(s) 
		& = - \hat{g}\big(s,\cdot,\widehat{\mathscr{Y}}(s),\widehat{\mathscr{Z}}^1(s),\ldots,\widehat{\mathscr{Z}}^m(s)\big) ds
		+ \sum_{k=1}^m \widehat{\mathscr{Z}}^k(s) dW_k(s),   
		\quad s \in [0,T],  \\
		\widehat{\mathscr{Y}}(T) & = \tfrac12 I_n
	\end{split}
\end{equation}
with the driver 
\begin{equation*}
	\begin{split}
		& \hat{g}\big(s,\omega,\widehat{\mathscr{Y}}(s,\omega),\widehat{\mathscr{Z}}^1(s,\omega),\ldots,\widehat{\mathscr{Z}}^m(s,\omega)\big) \\
		& = \widehat{\mathscr{Y}} (\mathscr{A}+\mathscr{B}\procB) + (\mathscr{A}+\mathscr{B}\procB)^\top \widehat{\mathscr{Y}} 
		+ \mathscr{Q} (I_n-\procB) \\
		& \quad + \sum_{k=1}^m \big( (I_n-2\procB^\top) \mathscr{C}^k \widehat{\mathscr{Y}} \mathscr{C}^k (I_n-2\procB) 
		+ \widehat{\mathscr{Z}}^k \mathscr{C}^k (I_n-2\procB) 
		+ (I_n-2\procB^\top) \mathscr{C}^k \widehat{\mathscr{Z}}^k \big) \\
		& \quad - \bigg( \widehat{\mathscr{Y}} \mathscr{B} - 2 \sum_{k=1}^m \big( (I_n-2\procB^\top) \mathscr{C}^k \widehat{\mathscr{Y}} \mathscr{C}^k 
		+ \widehat{\mathscr{Z}}^k \mathscr{C}^k \big) \bigg)
		\cdot \bigg( \KK + 4 \sum_{k=1}^m \mathscr{C}^k \widehat{\mathscr{Y}} \mathscr{C}^k  \bigg)^{-1} \\
		& \qquad \cdot 
		\bigg( \mathscr{B}^\top \widehat{\mathscr{Y}} 
		- 2 \sum_{k=1}^m \big( \mathscr{C}^k \widehat{\mathscr{Y}} \mathscr{C}^k (I_n-2\procB)
		+ \mathscr{C}^k \widehat{\mathscr{Z}}^k \big) \bigg), 
	\end{split}
\end{equation*}
where on the right-hand side of the equation for the driver we suppressed the dependence on $\omega\in\Omega$ and $s \in [0,T]$. 
A pair $(\widehat{\mathscr{Y}},\widehat{\mathscr{Z}})$ with $\widehat{\mathscr{Z}}=(\widehat{\mathscr{Z}}^1,\ldots,\widehat{\mathscr{Z}}^m)$ is called  
a solution of the BSDE~\eqref{eq:BSDE_Riccati_KT} if 
\begin{itemize}
	\item the process $\widehat{\mathscr{Y}} \colon [0,T]\times \Omega \to \cS^n$ is bounded, adapted, and continuous,
	
	\item for every $k\in\{1,\ldots,m\}$ it holds that the process $\widehat{\mathscr{Z}}^k\colon [0,T]\times \Omega \to \cS^n$ is progressively measurable and satisfies $E[\int_0^T \lVert \widehat{\mathscr{Z}}^k(s) \rVert_F^2 ds] < \infty$, 
	
	\item 
	the process 
	$\KK + 4 \sum_{k=1}^m \mathscr{C}^k \widehat{\mathscr{Y}} \mathscr{C}^k$ is    
	$dP\times ds$-a.e.\ $\cS^n_{> 0}$-valued, 
	and
	
	\item the BSDE~\eqref{eq:BSDE_Riccati_KT} is satisfied $P$-a.s.
\end{itemize}

\begin{propo}\label{propo:existenceRiccatiBSDE}
	Let \cref{assump_filtration} and \cref{ass:procB} be in force. 
	Assume that	$\mathscr{Q}(I_n-\procB)$ and $\KK$ are $dP\times ds$-a.e.\ $\cS_{\ge 0}^n$-valued. 
	
	(i) 
	Suppose that there exists $\delta \in (0,\infty)$ such that $\KK - \delta I_n$ is $dP\times ds$-a.e.\  $\cS_{\ge 0}^n$-valued. 
	Then there exists a unique solution $(\widehat{\mathscr{Y}},\widehat{\mathscr{Z}})$ of the BSDE~\eqref{eq:BSDE_Riccati_KT}.
	Moreover, $\widehat{\mathscr{Y}}$ is $P$-a.s.\ $\cS_{\ge 0}^n$-valued and there exists $\varepsilon\in(0,\infty)$ such that 
	$\KK + 4 \sum_{k=1}^m \mathscr{C}^k \widehat{\mathscr{Y}} \mathscr{C}^k -\varepsilon I_n$ 
	is    
	$dP\times ds$-a.e.\ $\cS^n_{\ge 0}$-valued.
	
	(ii) 
	Suppose that there exists $\delta \in (0,\infty)$ such that $\sum_{k=1}^m \diagsigma_{k} \diagsigma_k - \delta I_n$ is $dP\times ds$-a.e.\ $\cS_{\ge 0}^n$-valued.  
	Then there exists a unique solution $(\widehat{\mathscr{Y}},\widehat{\mathscr{Z}})$ of the BSDE~\eqref{eq:BSDE_Riccati_KT}.
	Moreover, there exists $\varepsilon\in(0,\infty)$ such that $\widehat{\mathscr{Y}} -\varepsilon I_n$ is $P$-a.s.\ $\cS^n_{\ge 0}$-valued.
\end{propo}

\begin{proof}
	For $y \in \R^n$, $t\in [0,T]$, and $u\in\cL_t^2$, where 
	\begin{equation*}
		\begin{split}
			\cL_t^2 = & \bigg\{v \,\bigg\lvert\, v\colon [t,T]\times \Omega \to\R^n \text{ progr.\ measurable and }  \lVert v \rVert_{\cL_t^2}^2 = E\bigg[\int_t^T \lVert v(s) \rVert_F^2 ds\bigg] < \infty \bigg\} , 
		\end{split}
	\end{equation*}
	let 
	\begin{equation*}
		\begin{split}
			dH^u(s) & = 
			(\mathscr{A}(s) + \mathscr{B}(s) \procB(s) ) H^u(s) ds 
			+ \mathscr{B}(s) u(s) ds
			\\
			& \quad 
			+ \sum_{k=1}^m \mathscr{C}^k(s) (I_n - 2 \procB(s)) H^u(s) dW_k(s) 
			- 2 \sum_{k=1}^m \mathscr{C}^k(s) u(s) dW_k(s)  , \quad s\in[t,T],\\
			H^u(t) & = y, 
		\end{split}
	\end{equation*}
	and 
	\begin{equation*}
		\begin{split}
			\widetilde{J}_t(y,u) 
			& = \frac12 E[ (H^u(T))^\top H^u \,\vert\, \cF_t ] 
			+ E\bigg[ \int_t^T (u(s))^\top 
			\KK(s) u(s) ds  \,\Big\vert\, \cF_t \bigg] \\
			& \quad 
			+ E\bigg[ \int_t^T (H^u(s))^\top \mathscr{Q}(s) ( I_n - \procB(s)) H^u(s) ds  \,\Big\vert\, \cF_t \bigg] 
			. 
		\end{split}
	\end{equation*}
	Note that \cref{assump_filtration} ensures that $(\cF_s)_{s\in[0,T]}$ is the augmented natural filtration of the Brownian motion $W$. 
	Moreover, observe that the progressively measurable processes $\mathscr{A} - \mathscr{B}\procB$, $\mathscr{B}$, $\mathscr{C}^k (I_n - 2 \procB)$,  $k\in\{1,\ldots,m\}$, $-2\mathscr{C}^k$,  $k\in\{1,\ldots,m\}$, $\KK$, and $\mathscr{Q}(I_n-\procB)$ 
	are $dP\times ds$-a.e.\ bounded (cf.\ \cref{rem:remark_on_setting_assumptions2} and \cref{ass:procB}).
	This and the fact that $\tfrac12 I_n$ is symmetric ensure that the assumptions (A1)' and (A2) in Sun et al.\ \cite{sun2021indefiniteLQ} are satisfied. 
	
	\smallskip
	
	(i) 
	Note that (5) in \cite{sun2021indefiniteLQ} is satisfied and thus \cite[Proposition~7.1]{sun2021indefiniteLQ} shows that the cost functional $\widetilde{J}$ meets the uniform convexity condition in \cite{sun2021indefiniteLQ}.  
	We can therefore apply \cite[Theorem~9.1]{sun2021indefiniteLQ} to obtain the existence of a unique solution $(\widehat{\mathscr{Y}},\widehat{\mathscr{Z}})$ of the BSDE~\eqref{eq:BSDE_Riccati_KT} 
	and the existence of an $\varepsilon\in(0,\infty)$ such that 
	$\KK + 4 \sum_{k=1}^m \mathscr{C}^k \widehat{\mathscr{Y}} \mathscr{C}^k -\varepsilon I_n$ 
	is    
	$dP\times ds$-a.e.\ $\cS^n_{\ge 0}$-valued.
	Moreover, it follows from Section~5 and the introduction of Section~6 in~\cite{sun2021indefiniteLQ} 
	for all $y \in \R^n$ and $t\in[0,T]$ that 
	$y^\top \widehat{\mathscr{Y}}(t) y = \essinf_{u\in\cL_t^2} \widetilde{J}_t(y,u)$. 
	Since $\mathscr{Q}(I_n-\procB)$ and $\KK$ are $dP\times ds$-a.e.\ $\cS_{\ge 0}^n$-valued, we hence conclude that $\widehat{\mathscr{Y}}$ is $P$-a.s.\ $\cS_{\ge 0}^n$-valued.
	
	\smallskip
	
	(ii) 
	Note that (8) in \cite{sun2021indefiniteLQ} is satisfied and thus \cite[Proposition~7.1]{sun2021indefiniteLQ} shows that the cost functional $\widetilde{J}$ meets the uniform convexity condition in \cite{sun2021indefiniteLQ}. 
	We can therefore apply \cite[Theorem~9.1]{sun2021indefiniteLQ} to obtain the existence of a unique solution $(\widehat{\mathscr{Y}},\widehat{\mathscr{Z}})$ of the BSDE~\eqref{eq:BSDE_Riccati_KT}.  
	Moreover, it follows from Section~5 and the introduction of Section~6 in~\cite{sun2021indefiniteLQ} 
	for all $y \in \R^n$ and $t\in[0,T]$ that 
	$y^\top \widehat{\mathscr{Y}}(t) y = \essinf_{u\in\cL_t^2} \widetilde{J}_t(y,u)$. 
	Note that also the assumptions (A1), (A2), (A5), (A6)', and (A7) in Kohlmann \& Tang  \cite{kohlmann2003minimization} are satisfied. Therefore, \cite[Lemma~3.2]{kohlmann2003minimization} ensures that there exists $\varepsilon \in (0,\infty)$ such that for all $y \in \R^n$, $t\in[0,T]$, and $u \in \cL_t^2$ it holds that 
	$\widetilde{J}_t(y,u) \ge \varepsilon y^\top y$. 
	Hence, it holds that  $\widehat{\mathscr{Y}} -\varepsilon I_n$ is $P$-a.s.\ $\cS_{\ge 0}^n$-valued.  
\end{proof}

Given a solution $(\widehat{\mathscr{Y}},\widehat{\mathscr{Z}})$ of the BSDE~\eqref{eq:BSDE_Riccati_KT}, we define the matrix-valued progressively measurable process $\widehat\theta=(\widehat\theta(s))_{s \in [0,T]}$ by, for all $s\in[0,T]$,  
\begin{equation}\label{eq:def_theta_hat}
	\begin{split}
		\widehat\theta(s) & = 
		- \bigg( \KK(s) + 4 \sum_{k=1}^m \mathscr{C}^k(s) \widehat{\mathscr{Y}}(s) \mathscr{C}^k(s)  \bigg)^{-1} \\
		& \quad \cdot 
		\bigg( (\mathscr{B}(s))^\top \widehat{\mathscr{Y}}(s) 
		- 2 \sum_{k=1}^m \big( \mathscr{C}^k(s) \widehat{\mathscr{Y}}(s) \mathscr{C}^k(s) (I_n - 2 \procB(s)) + \mathscr{C}^k(s) \widehat{\mathscr{Z}}^k(s) \big) \bigg). 
	\end{split}
\end{equation}

Next, given a solution $(\widehat{\mathscr{Y}},\widehat{\mathscr{Z}})$ of the BSDE~\eqref{eq:BSDE_Riccati_KT}, we introduce a matrix-valued linear BSDE: 
\begin{equation}\label{eq:BSDE_linear_KT}
	\begin{split}
		d \widehat{\psi}(s) 
		& = - \hat{f}\big(s,\cdot,\widehat{\mathscr{Y}}(s),\widehat{\psi}^1(s),\ldots,\widehat{\psi}^m(s)\big) ds
		+ \sum_{k=1}^m \widehat{\phi}^k(s) dW_k(s),  
		\quad s \in [0,T],  \\
		\widehat{\psi}(T) & = -\tfrac12 \gamma^{\frac12}(T) \xi  
	\end{split}
\end{equation}
with the driver 
\begin{equation}\label{eq:driver_of_linear_BSDE_KT}
	\begin{split}
		& \hat{f}\big(s,\omega,\widehat{\mathscr{Y}}(s,\omega),\widehat{\psi}^1(s,\omega),\ldots,\widehat{\psi}^m(s,\omega)\big) \\
		& = \big(\mathscr{A}+\mathscr{B} (\procB + \widehat\theta) \big)^\top \widehat{\psi}  
		+ \sum_{k=1}^m \big( I_n-2 (\procB 
		+ \widehat{\theta}) \big)^\top \mathscr{C}^k 
		\big( \widehat{\phi}^k + 2  \widehat{\mathscr{Y}} \mathscr{C}^k \procB \gamma^{\frac12} \zeta \big) \\
		& \quad 
		-  \mathscr{Q} (I_n-\procB) \gamma^{\frac12} \zeta 
		- \widehat{\mathscr{Y}} \mathscr{B} \procB \gamma^{\frac12} \zeta 
		+ 2 \sum_{k=1}^m \widehat{\mathscr{Z}}^k \mathscr{C}^k \procB \gamma^{\frac12} \zeta  
		, 
	\end{split}
\end{equation}
where on the right-hand side of the equation for the driver we suppressed the dependence on $\omega\in\Omega$ and $s \in [0,T]$. 
A pair $(\widehat{\psi},\widehat{\phi})$ with $\widehat{\phi}=(\widehat{\phi}^1,\ldots,\widehat{\phi}^m)$ is called 
a solution of the BSDE~\eqref{eq:BSDE_linear_KT} if 
\begin{itemize}
	\item the process $\widehat{\psi} \colon [0,T]\times \Omega \to \R^n$ is adapted and continuous and satisfies that $E[\sup_{s \in [0,T]} \lVert \widehat{\psi}(s) \rVert_F^2]<\infty$, 
	
	\item for every $k\in\{1,\ldots,m\}$ it holds that the process $\widehat{\phi}^k\colon [0,T]\times \Omega \to \R^n$ is progressively measurable and satisfies $E[\int_0^T \lVert \widehat{\phi}^k(s) \rVert_F^2 ds] < \infty$,
	
	\item the BSDE~\eqref{eq:BSDE_linear_KT} is satisfied $P$-a.s.
\end{itemize}

Note that the BSDE~\eqref{eq:BSDE_linear_KT} with the driver \eqref{eq:driver_of_linear_BSDE_KT} is linear, but it is not assured that the coefficients are bounded. 

\begin{propo}\label{propo:existenceLinearBSDE}
	Let \cref{assump_filtration} and \cref{ass:procB} be in force. 
	Assume that	$\mathscr{Q}(I_n-\procB)$ and $\KK$ are $dP\times ds$-a.e.\ $\cS_{\ge 0}^n$-valued.
	
	(i) 
	Suppose that there exists $\delta \in (0,\infty)$ such that $\KK - \delta I_n$ is $dP\times ds$-a.e.\  $\cS_{\ge 0}^n$-valued. 
	Then there exists a unique solution $(\widehat{\psi},\widehat{\phi})$ of the BSDE~\eqref{eq:BSDE_linear_KT}.
	
	(ii) 
	Suppose that there exists $\delta \in (0,\infty)$ such that $\sum_{k=1}^m \diagsigma_{k} \diagsigma_k - \delta I_n$ is $dP\times ds$-a.e.\ $\cS_{\ge 0}^n$-valued.  
	Then there exists a unique solution $(\widehat{\psi},\widehat{\phi})$ of the BSDE~\eqref{eq:BSDE_linear_KT}.
\end{propo}

\begin{proof}
	Observe that \cref{assump_filtration} ensures that $(\cF_s)_{s\in[0,T]}$ is the augmented natural filtration of the Brownian motion $W$. 
	
	(i) 
	First, note that \cref{propo:existenceRiccatiBSDE} implies that there exists a unique solution $(\widehat{\mathscr{Y}},\widehat{\mathscr{Z}})$ of the Riccati BSDE~\eqref{eq:BSDE_Riccati_KT} and that $\widehat{\mathscr{Y}}$ is $\cS_{\ge 0}^n$-valued. 
	Furthermore, \cref{ass:procB}, \cref{rem:remark_on_setting_assumptions2}, the fact that $\mathscr{Q}(I_n-\procB)$ and $\KK - \delta I_n$ are $dP\times ds$-a.e.\ $\cS_{\ge 0}^n$-valued, \eqref{eq:int_cond_terminal_pos}, and  \eqref{eq:int_cond_zeta} 
	ensure that 
	the assumptions (A1), (A5), (A6), and (A7) in Kohlmann \& Tang \cite{kohlmann2003minimization} are satisfied.
	Noting \cite[Section~4]{kohlmann2003minimization}, we can thus apply \cite[Theorem~3.11]{kohlmann2003minimization}, which yields the existence of a unique solution of the linear BSDE~\eqref{eq:BSDE_linear_KT}.
	
	(ii)
	\cref{propo:existenceRiccatiBSDE} ensures that there exists a unique solution $(\widehat{\mathscr{Y}},\widehat{\mathscr{Z}})$ of the Riccati BSDE~\eqref{eq:BSDE_Riccati_KT} and that there exists $\varepsilon \in (0,\infty)$ such that  $\widehat{\mathscr{Y}}-\varepsilon I_n$ is $\cS_{\ge 0}^n$-valued. 
	Furthermore, \cref{ass:procB}, \cref{rem:remark_on_setting_assumptions2}, the fact that $\mathscr{Q}(I_n-\procB)$, $\sum_{k=1}^m \diagsigma_{k} \diagsigma_k - \delta I_n$, and $\KK$ are $dP\times ds$-a.e.\ $\cS_{\ge 0}^n$-valued, 
	\eqref{eq:int_cond_terminal_pos}, and  \eqref{eq:int_cond_zeta} 
	guarantee that 
	the assumptions (A1), (A2), (A5), (A6)', and (A7) in \cite{kohlmann2003minimization} are satisfied.
	Therefore, \cite[Theorem~3.11]{kohlmann2003minimization} implies the existence of a unique solution of the linear BSDE~\eqref{eq:BSDE_linear_KT}.
\end{proof}

Given a solution $(\widehat{\mathscr{Y}},\widehat{\mathscr{Z}})$ of the BSDE~\eqref{eq:BSDE_Riccati_KT} and a solution $(\widehat{\psi},\widehat{\phi})$ of the BSDE~\eqref{eq:BSDE_linear_KT}, we define the $\R^n$-valued progressively measurable process $\widehat\theta^0=(\widehat\theta^0(s))_{s \in [0,T]}$ by, for all $s\in[0,T]$,  
\begin{equation}\label{eq:def_theta_hat0}
	\begin{split}
		\widehat\theta^0(s) & = 
		- \bigg( \KK(s) + 4 \sum_{k=1}^m \mathscr{C}^k(s) \widehat{\mathscr{Y}}(s) \mathscr{C}^k(s)  \bigg)^{-1} \\
		& \quad \cdot 
		\bigg( (\mathscr{B}(s))^\top \widehat{\psi}(s) 
		- 2 \sum_{k=1}^m  \mathscr{C}^k(s)  \big[ \widehat{\phi}^k(s) + 2  \widehat{\mathscr{Y}}(s) \mathscr{C}^k(s) \procB(s) \gamma^{\frac12}(s) \zeta(s) \big] \bigg). 
	\end{split}
\end{equation}

Further, given a solution $(\widehat{\mathscr{Y}},\widehat{\mathscr{Z}})$ of the BSDE~\eqref{eq:BSDE_Riccati_KT} and a solution $(\widehat{\psi},\widehat{\phi})$ of the BSDE~\eqref{eq:BSDE_linear_KT} we consider the $\R^n$-valued SDE 
\begin{equation}\label{eq:def_scHhat_opt}
	\begin{split}
		d\scHhat^{*}(s) & = 
		\big[\mathscr{A}(s) + \mathscr{B}(s) \big(\procB(s) + \widehat{\theta}(s) \big) \big] 
		\scHhat^{*}(s) ds 
		+ \mathscr{B}(s) \big[\procB(s) \gamma^{\frac12}(s) \zeta(s) - \widehat{\theta}^0(s) \big] ds 
		\\
		& \quad 
		+ \sum_{k=1}^m \mathscr{C}^k(s) \big(I_n - 2 \big(\procB(s) + \widehat{\theta}(s) \big) \big) \scHhat^{*}(s) dW_k(s)  \\
		& \quad + 2 \sum_{k=1}^m \mathscr{C}^k(s) \big( \widehat{\theta}^0(s) - \procB(s) \gamma^{\frac12}(s) \zeta(s) \big) dW_k(s), \quad s\in[0,T],\\
		\scHhat^{*}(0) & = \gamma^{-\frac12}(0) d - \gamma^{\frac12}(0) x. 
	\end{split}
\end{equation}

We can now solve under appropriate conditions the LQ stochastic control problem given by \eqref{eq:def_cost_fct_LQ_hat} and \eqref{eq:def_scHhat}. 
For this, we rely on Kohlmann \& Tang~\cite{kohlmann2003minimization}. 

\begin{propo}\label{propo:soln_LQ_KT}
	Let \cref{assump_filtration} and \cref{ass:procB} be in force. 
	Assume that	$\mathscr{Q}(I_n-\procB)$ and $\KK$ are $dP\times ds$-a.e.\ $\cS_{\ge 0}^n$-valued.
	Suppose that there exists $\delta \in (0,\infty)$ 
	such that $\KK - \delta I_n$ is $dP\times ds$-a.e.\  $\cS_{\ge 0}^n$-valued, 
	or such that $\sum_{k=1}^m \diagsigma_{k} \diagsigma_k - \delta I_n$ is $dP\times ds$-a.e.\ $\cS_{\ge 0}^n$-valued. 
	Let $(\widehat{\mathscr{Y}},\widehat{\mathscr{Z}})$ be the unique solution of the Riccati BSDE~\eqref{eq:BSDE_Riccati_KT} (cf.\ \cref{propo:existenceRiccatiBSDE}). 
	Recall the definition~\eqref{eq:def_theta_hat} of $\widehat{\theta}$ and let $(\widehat{\psi},\widehat{\phi})$ be the unique solution of the linear BSDE~\eqref{eq:BSDE_linear_KT} (cf.\ \cref{propo:existenceLinearBSDE}). 
	Recall the definition~\eqref{eq:def_theta_hat0} of $\widehat{\theta}^0$. 
	
	(i)
	There exists a unique $\hat{u}^*\in\cL^2$ such that for all $u\in\cL^2$ it holds $P$-a.s.\ that $\LQJhat(\hat{u}^*)\le \LQJhat(u)$. 
	Moreover, there exists a unique solution $\scHhat^*$ of the SDE~\eqref{eq:def_scHhat_opt} (the state process associated to~$\hat{u}^*$), and the unique optimal control~$\hat{u}^*$ admits the representation 
	\begin{equation*}
		\hat{u}^*(s) = \widehat{\theta}(s) \scHhat^*(s) - \widehat{\theta}^0(s), 
		\quad s \in[0,T]. 
	\end{equation*}
	
	(ii)
	Let 
	\begin{equation}\label{eq:extracosts}
		\begin{split}
			\widehat{V}^0 
			& = \frac12 E[ \lVert \gamma^{\frac12}(T) \xi \rVert_F^2] 
			+ E\bigg[ \int_0^T (\gamma^{\frac12}(s) \zeta(s))^\top \mathscr{Q}(s) (I_n - \procB(s)) (\gamma^{\frac12}(s) \zeta(s)) \, ds \bigg] \\
			& \quad - 2 E\bigg[ \int_0^T (\mathscr{B}(s) \procB(s) \gamma^{\frac12}(s) \zeta(s))^\top \widehat{\psi}(s) \, ds \bigg] \\
			& \quad + 4 E\bigg[ \int_0^T \sum_{k=1}^m (\mathscr{C}^k(s) \procB(s) \gamma^{\frac12}(s) \zeta(s))^\top 
			\widehat{\mathscr{Y}}(s)
			\mathscr{C}^k(s) \procB(s) \gamma^{\frac12}(s) \zeta(s) \,  ds \bigg] \\
			& \quad - 2 E\bigg[ \int_0^T \sum_{k=1}^m (\mathscr{C}^k(s) \procB(s) \gamma^{\frac12}(s) \zeta(s))^\top \widehat{\phi}^k(s) \,  ds \bigg] \\
			& \quad - E\bigg[ \int_0^T (\widehat{\theta}^0(s))^\top \bigg( \KK(s) + 4 \sum_{k=1}^m \mathscr{C}^k(s) \widehat{\mathscr{Y}}(s) \mathscr{C}^k(s) \bigg) \widehat{\theta}^0(s)  \, ds \bigg] .
		\end{split}
	\end{equation}
	Then 
	\begin{equation*}
		\begin{split}
			& \inf_{\hat{u}\in\cL^2} \LQJhat(\hat{u} ) \\
			& = \bigl( \gamma^{-\frac12}(0) d - \gamma^{\frac12}(0) x \bigr)^\top \widehat{\mathscr{Y}}(0) \bigl( \gamma^{-\frac12}(0) d - \gamma^{\frac12}(0) x \bigr) 
			- 2 \bigl( \gamma^{-\frac12}(0) d - \gamma^{\frac12}(0) x \bigr)^\top \widehat{\psi}(0) 
			+ \widehat{V}^0 .
		\end{split}
	\end{equation*}
\end{propo}

\begin{proof}
	(i) 
	It follows from Kohlmann \& Tang  \cite[Theorem~3.8]{kohlmann2003minimization} (note also \cite[Section~4]{kohlmann2003minimization} concerning the assumptions)
	that there exists a unique optimal control $\hat{u}^* \in \cL^2$ for the LQ stochastic control problem given by \eqref{eq:def_cost_fct_LQ_hat} and \eqref{eq:def_scHhat} and that $\hat{u}^*$ is given by 
	\begin{equation}\label{eq:1143}
		\hat{u}^* = \widehat{\theta} \scHhat^{\hat{u}^*} - \widehat{\theta}^0 .
	\end{equation}
	From \eqref{eq:def_scHhat}, \eqref{eq:def_scHhat_opt}, and \eqref{eq:1143} we conclude that $\scHhat^{\hat{u}^*}$ is a solution of the SDE~\eqref{eq:def_scHhat_opt}, which is unique by, for example, \cite{galchuk1979} (see also \cite[Lemma~7.1]{tang2003generalLQ}).
	
	(ii)
	The formula for the optimal costs also follows from \cite[Theorem~3.8]{kohlmann2003minimization} (note also \cite[Section~4]{kohlmann2003minimization} concerning the assumptions).
\end{proof}

\subsection{Solution of the trade execution problem for progressively measurable strategies}\label{sec:general_targets_soln_trade_execution}

In \cref{cor:soln_opt_trade_execution_KT} we state the solution of the trade execution problem of \cref{sec:pm_problem} with general targets $\xi$ and $\zeta$. 
This result is a consequence of \cref{propo:soln_LQ_KT}, \cref{cor:uoptimaliffuhatoptimal}, \cref{lem:trafouvsuhat},  and \cref{cor:linkLQpm}.

\begin{corollary}\label{cor:soln_opt_trade_execution_KT}
	Let \cref{assump_filtration} and \cref{ass:procB} be in force. 
	Assume that $\mathscr{Q}(I_n-\procB)$ and $\KK$ are $dP\times ds$-a.e.\ $\cS_{\ge 0}^n$-valued.
	Suppose that there exists $\delta \in (0,\infty)$ 
	such that $\KK - \delta I_n$ is $dP\times ds$-a.e.\  $\cS_{\ge 0}^n$-valued, 
	or such that $\sum_{k=1}^m \diagsigma_{k} \diagsigma_k - \delta I_n$ is $dP\times ds$-a.e.\ $\cS_{\ge 0}^n$-valued. 
	Let $(\widehat{\mathscr{Y}},\widehat{\mathscr{Z}})$ be the unique solution of the Riccati BSDE~\eqref{eq:BSDE_Riccati_KT} (cf.\ \cref{propo:existenceRiccatiBSDE}). 
	Recall the definition~\eqref{eq:def_theta_hat} of $\widehat{\theta}$ and let $(\widehat{\psi},\widehat{\phi})$ be the unique solution of the linear BSDE~\eqref{eq:BSDE_linear_KT} (cf.\ \cref{propo:existenceLinearBSDE}). 
	Recall the definition~\eqref{eq:def_theta_hat0} of $\widehat{\theta}^0$ and let $\scH^*$ be the unique solution of the SDE~\eqref{eq:def_scHhat_opt} (cf.\ \cref{propo:soln_LQ_KT}).  
	
	Then 
	there exists a unique (up to $dP\times ds$-null sets) minimizer $X^*$ of $\pmJ$ in $\cA^{pm}$. 
	Moreover, it holds that 
	\begin{equation*}
		\begin{split}
			& X^*(0-)=x, \quad X^*(T)=\xi,\\
			& X^*(s)
			= \gamma^{-\frac12}(s)
			\big( \big( \widehat{\theta}(s) + \procB(s) - I_n \big) \scHhat^*(s) 
			+ \procB(s) \gamma^{\frac12}(s) \zeta(s) - \widehat{\theta}^0(s) \big),
			\quad s\in[0,T) . 
		\end{split}
	\end{equation*}
	The deviation process $D^*:= D^{X^*}$ (defined in \eqref{eq:def_deviation_pm}) satisfies 
	that 
	\begin{equation*}
		D^*(s) 
		= \gamma^{\frac12}(s) \big( \big( \widehat{\theta}(s) + \procB(s) \big) \scHhat^*(s) 
		+ \procB(s) \gamma^{\frac12}(s) \zeta(s) - \widehat{\theta}^0(s) \big) , 
		\quad s \in [0,T). 
	\end{equation*}
	The optimal costs are given by 
	\begin{equation*}
		\begin{split}
			\inf_{X \in \cA^{pm}} \pmJ(X) 
			& = \bigl( \gamma^{-\frac12}(0) d - \gamma^{\frac12}(0) x \bigr)^\top \widehat{\mathscr{Y}}(0) \bigl( \gamma^{-\frac12}(0) d - \gamma^{\frac12}(0) x \bigr) \\
			& \quad - 2 \bigl( \gamma^{-\frac12}(0) d - \gamma^{\frac12}(0) x \bigr)^\top \widehat{\psi}(0) 
			+ \widehat{V}^0
			- \tfrac12 d^\top \gamma^{-1}(0) d 
			,
		\end{split}
	\end{equation*}
	with $\widehat{V}^0$ from \eqref{eq:extracosts}.
\end{corollary}

\begin{remark}\label{rem:ontheoptimalsoln}
	Suppose that $\xi=0$ and $\zeta=0$ in the setting of 
	\cref{cor:soln_opt_trade_execution_KT}. 
	Then $(\widehat{\psi},\widehat{\phi})=(0,0)$ is the unique solution of the linear BSDE~\eqref{eq:BSDE_linear_KT} and it follows that $\widehat{\theta}^0=0$ in \eqref{eq:def_theta_hat0} and $\widehat{V}^0=0$ in \eqref{eq:extracosts}. 
\end{remark}

\section{Further examples}
\label{sec:appendix:examples}

In this section we complement \cref{sec:examples} by figures and further examples.

\subsection{On the multi-asset variant of the Obizhaeva--Wang model}
\label{sec:multiasset_OW}

Recall \cref{set:OW} where we consider a multi-asset variant of the Obizhaeva--Wang model \cite{obizhaeva2013optimal}  
(in particular, we take the price impact $\gamma$ and the resilience $\rho$ to be constant in our setting of \cref{sec:setting}).  
In the next example, we illustrate that 
it is not sufficient for the existence of an optimal execution strategy in \cref{set:OW} to assume that $\gamma \in \cS_{>0}^n$ and $\rho \in \cS_{>0}^n$.

\begin{ex}\label{exa:notposdefwhileeachposdef}
	In \cref{set:OW} let $T\in (0,\tfrac25)$, $n=2$, $d=0$,  
	\begin{equation*}
		\gamma = 
		\begin{pmatrix}
			2&1\\
			1&1
		\end{pmatrix}, 
		\quad \text{and} \quad 
		\rho = 
		\begin{pmatrix}
			1&2\\
			2&5
		\end{pmatrix}
		.
	\end{equation*}
	Note that it holds that $\gamma\in\cS_{>0}^2$ and that also $\rho \in \cS_{>0}^2$. 
	We denote $e_1=(1,0)^\top \in \R^2$. 
	For every $k \in \N$ we consider the $\R^2$-valued function $X^k=(X^k(s))_{s\in[0,T]}$ defined by $X^k(0-)=x$, $X^k(T)=0$, and 
	\begin{equation*}
		X^k(s) = 
		x+  k \gamma^{-1} e_1 + s k \gamma^{-1} \rho e_1
		= 
		x + k 
		\begin{pmatrix}
			1\\-1
		\end{pmatrix}
		+ s k 
		\begin{pmatrix}
			-1\\3
		\end{pmatrix}
		, \quad s\in[0,T). 
	\end{equation*} 
	It holds for all $k\in\N$ that the associated  $D^k\equiv D^{X^k}$ given by $D^k(0-)=d=0$ and~\eqref{eq:def_deviation_pm} 
	(or, equivalently, \eqref{eq:deviationdynmultivariate}) 
	satisfies for all $r\in[0,T)$ that 
	$D^k(r)=(k,0)^\top$ and $D^k(T)=-\gamma x - T \rho (k,0)^\top$.
	In particular, it follows for all $k\in\N$ that $X^k$ is an admissible trade execution strategy (in $\cA^{fv}$ and $\cA^{pm}$).  
	Note that when, for $k\in\N$, we use the strategy $X^k$, then we 
	at the initial time buy $k$ units in the first asset and concurrently sell $k$ units in the second asset. 
	The initial block trade for all $k\in\N$ contributes  
	\begin{equation}\label{eq:1536a}
		\frac12 (\Delta X^k(0) )^\top \gamma \Delta X^k(0)
		= \frac{k^2}{2} > 0
	\end{equation}
	to the execution costs $C(X^k)$. 
	During the time interval $(0,T)$ we then for all $k\in\N$ sell in the first asset with rate $k$ and we buy in the second asset with the faster rate $3k$. 
	We thereby exploit the lower price impact and the higher resilience in the second asset with, in some sense, not too adverse side effects on the less liquid first asset.
	Trading during the time interval $(0,T)$ for all $k\in\N$ contributes 
	\begin{equation}\label{eq:1536b}
		\int_{(0,T)} (D^k(s-))^\top dX^k(s)
		= T \begin{pmatrix}
			k & 0
		\end{pmatrix}
		\begin{pmatrix}
			-k \\
			3k
		\end{pmatrix}
		= - T k^2 < 0 
	\end{equation}
	to the execution costs, 
	where we observe that the trading in the second asset does not enter the costs since the deviation in the second asset is kept at~$0$. 
	Finally, for all $k\in\N$ the final trade to close the position is given by
	\begin{equation*}
		\Delta X^k(T) = \begin{pmatrix}
			x_1+(1-T)k\\
			x_2+(3T-1)k
		\end{pmatrix}
	\end{equation*}
	and contributes 
	\begin{equation}\label{eq:1536c}
		\begin{split}
			& (D^k(T-))^\top \Delta X^k(T) + \frac12 (\Delta X^k(T))^\top \Delta X^k(T) \\
			& = 
			\frac{k^2}{2} (5T^2-1)
			+ \frac{k}{2} (2Tx_1+4Tx_2) 
			+ \frac12 (2x_1^2+2x_1x_2+x_2^2)
		\end{split}
	\end{equation}
	to the execution costs. 
	If $k \in\N$ is chosen large enough, then~\eqref{eq:1536c} is negative due to $T<\tfrac25$. 
	We can moreover choose $k\in\N$ large enough such that the cost benefit (negative costs) during the time interval $(0,T]$ overcompensates the costs of the trade at the initial time $0$ so that $C(X^k)<0$.
	Furthermore, we have from \eqref{eq:1536a}, \eqref{eq:1536b}, and~\eqref{eq:1536c} that we can produce arbitrarily large negative costs  $\pmJ(X^{k}) = C(X^k) \to -\infty$ as $k\to\infty$. 
\end{ex}

In general the existence of an optimal execution strategy is ensured under the assumptions of \cref{cor:soln_opt_trade_execution}. 
In this regard, note in particular \cref{assump_convexity}. 
Observe that in 
\cref{set:OW}, we have from 
\cref{rem:sufficient_cond_convex} and  \cref{rem:necessaryforconvex} that 
\cref{assump_convexity} is satisfied 
if and only if $\kap = \frac12 (\gamma^{-\frac12} \rho \gamma^{\frac12} + \gamma^{\frac12} \rho^\top \gamma^{-\frac12} ) \in \cS_{>0}^n$.
In \cref{exa:notposdefwhileeachposdef} this condition is indeed violated, as we can demonstrate that $\kap$ is indefinite in this case.

In the next remark we discuss sufficient conditions which ensure that $\kap$ in the Obizhaeva--Wang model (\cref{set:OW}) is positive definite.  

\begin{remark}\label{rem:conley}
	Consider \cref{set:OW}. 
	The representation 
	$\kap = \tfrac12 \gamma^{-\frac12} ( \rho \gamma + \gamma \rho^\top ) \gamma^{-\frac12}$ 
	and the fact that $\gamma^{-\frac12} \in \cS_{>0}^n$ 
	show that $\kap \in \cS_{>0}^n$ 
	is satisfied if and only if $\rho \gamma + \gamma \rho^\top \in \cS_{>0}^n$. 
	If $\rho \in\cS_{>0}^n$ and $\rho\gamma=\gamma\rho$, then this condition holds true. 
	Moreover, we can use Conley et al.~\cite[Theorem 2.1]{conley2005elliptic} to provide a sufficient criterion without the requirement that~$\gamma$ and~$\rho$ commute:  
	Suppose that $\rho \in \cS_{>0}^n$ and denote by $\eta_1(\rho)$, $\eta_n(\rho)$ a smallest, respectively largest, eigenvalue of $\rho$. 
	Analogously, let $\eta_1(\gamma)$, $\eta_n(\gamma)$ denote a smallest, respectively largest, eigenvalue of $\gamma$. 
	If 
	\begin{equation*}
		\Bigg( \sqrt{\frac{\eta_n(\gamma)}{\eta_1(\gamma)}}-1 \Bigg)
		\Bigg( \sqrt{\frac{\eta_n(\rho)}{\eta_1(\rho)}}-1 \Bigg) < 2, 
	\end{equation*}
	then it holds that $\kap \in \cS_{>0}^n$. 
\end{remark}

We apply this result in the next example.

\begin{ex}\label{ex:application_conley}
	In \cref{set:OW} let $n=2$, $d=0$,  
	\begin{equation*}
		\gamma = 
		\begin{pmatrix}
			2&1\\
			1&1
		\end{pmatrix}, 
		\quad \text{and} \quad 
		\rho = 
		\begin{pmatrix}
			3&2\\
			2&5
		\end{pmatrix}
		.
	\end{equation*}
	Note that the only difference to \cref{exa:notposdefwhileeachposdef} is the entry $3$, respectively $1$, in $\rho$. 
	As in \cref{exa:notposdefwhileeachposdef}, it holds that $\gamma \in \cS_{> 0}^n$ and $\rho \in \cS_{> 0}^n$. Moreover, $\rho$ and $\gamma$ do not commute. 
	The eigenvalues of $\rho$ in the present example are $4+\sqrt{5}$ and $4-\sqrt{5}$. 
	For $\gamma$ we have the eigenvalues $\tfrac12 (3+\sqrt{5})$ and $\tfrac12 (3-\sqrt{5})$. 
	We compute that 
	\begin{equation*}
		\begin{split}
			& \Bigg( \sqrt{\frac{\tfrac12 (3+\sqrt{5})}{\tfrac12 (3-\sqrt{5})}} -1 \Bigg)
			\Bigg( \sqrt{\frac{4+\sqrt{5}}{4-\sqrt{5}}}-1 \Bigg) < 2
		\end{split}
	\end{equation*}
	and conclude via \cref{rem:conley} that $\kap\in\cS_{> 0}^n$. Thus, \cref{cor:soln_OW} applies and in contrast to \cref{exa:notposdefwhileeachposdef} we obtain finite minimal costs and the existence of an optimal strategy.
	Moreover, the present example illustrates that to be able to apply \cref{cor:soln_OW}, it is not necessary to assume that $\rho$ and $\gamma$ commute. 
\end{ex}

Furthermore, note that $\rho$ does not need to be symmetric for \cref{cor:soln_OW} to apply:

\begin{ex}
	In \cref{set:OW} let $n=2$, $d=0$,  
	\begin{equation*}
		\gamma = 
		\begin{pmatrix}
			2&1\\
			1&1
		\end{pmatrix}, 
		\quad \text{and} \quad 
		\rho = 
		\begin{pmatrix}
			4&2\\
			3&5
		\end{pmatrix}
		.
	\end{equation*}
	It then holds that $\gamma \in \cS_{> 0}^n$ and 
	\begin{equation*}
		\rho \gamma + \gamma \rho^\top 
		=
		\begin{pmatrix}
			20&17\\
			17&16
		\end{pmatrix} 
		\in \cS_{> 0}^n, 
	\end{equation*}
	which implies that $\kap \in \cS_{> 0}^n$. 
	Now we can apply \cref{cor:soln_OW} and obtain the existence of a unique optimal strategy. 
\end{ex}

In some subsettings of \cref{set:OW}, the formulas in \cref{cor:soln_OW} simplify.  
For example, consider the condition $\rho\gamma = \gamma \rho^\top$, which within \cref{set:OW} is equivalent to $\mathscr{B}=\mathscr{B}^\top$. 
It then follows that $\KK = - \mathscr{B}$.
We further obtain that $\KK^{-1} \mathscr{B}^\top = - I_n$ and 
$\mathscr{B} \KK^{-1} \mathscr{B}^\top = - \mathscr{B}$. 
If, in addition, $\rho$ is symmetric, then it holds that $\mathscr{B}=-\rho$ and $\kap=\rho$. 
Therefore, we have the following corollary. 

\begin{corollary}\label{cor:OWsymcommute}
	Assume \cref{set:OW}, that $\rho \in \cS_{>0}^n$, and that $\rho\gamma = \gamma \rho$. 
	Then there exists a unique optimal strategy $X^* \in \cA^{pm}$ that minimizes 
	$\pmJ$, 
	and it holds that $X^* \in \cA^{fv}$.  
	The optimal strategy $X^*$ and the associated deviation $D^*$ satisfy 
	\begin{equation*}
		\begin{split}
			X^*(s) & = 
			\bigl( I_n + (T-s) \rho \bigr) 
			\bigl(2I_n + T \rho \bigr)^{-1} \bigl( x - \gamma^{-1} d \bigr)
			, \quad s\in[0,T) , \\
			D^*(s) &= \gamma^{\frac12} \bigl(2I_n + T \rho \bigr)^{-1} \bigl( \gamma^{-\frac12} d - \gamma^{\frac12} x \bigr),
			\quad  s\in[0,T), \\
			D^*(T) &= 2 \gamma^{\frac12} 
			\bigl(2I_n+ T \rho \bigr)^{-1} \bigl( \gamma^{-\frac12} d - \gamma^{\frac12} x \bigr) .
		\end{split}
	\end{equation*}
\end{corollary}

\begin{remark}
	The fact that $\rho$ in \cref{cor:OWsymcommute} is symmetric and commutes with $\gamma$ implies that there exist an orthogonal matrix $\widetilde{O} \in \R^{n\times n}$ and diagonal matrices $\widetilde \rho, \widetilde\gamma \in \R^{n\times n}$  such that $\rho = \widetilde{O}^\top \widetilde{\rho} \widetilde{O}$ and $\gamma = \widetilde{O}^\top \widetilde{\gamma} \widetilde{O}$ (see, for example, \cite[Chapter~4, Theorem~5]{Bellman1970MatrixAnalysis}). 
	The optimal strategy $X^*$ can thus be represented in the form   
	$\widetilde{O} X^*(0-)= \widetilde{O} x$, $\widetilde{O} X^*(T)=0$, and 
	\begin{equation*}
		\begin{split}
			\widetilde{O} X^*(s) & = 
			\bigl( I_n + (T-s) \widetilde\rho \bigr) 
			\bigl(2I_n + T \widetilde\rho \bigr)^{-1}
			\widetilde{O}
			x \\
			& \quad - 
			\bigl( I_n + (T-s) \widetilde\rho \bigr) 
			\bigl(2I_n + T \widetilde\rho \bigr)^{-1} 
			\widetilde{\gamma}^{-1} 
			\widetilde{O} 
			d
			, \quad s\in[0,T) .
		\end{split}
	\end{equation*}
	Note that in this representation the position $X^*$ and the deviation $D^*$ are orthogonally transformed with $\widetilde{O}$ into $\widetilde{O}X^*$ and $\widetilde{O}D^*$. Moreover, observe that the components of $\widetilde{O} X^*$ are the optimal strategies of certain single-asset optimal trade execution problems. Namely, for each $j\in\{1,\ldots,n\}$ we have that $(\widetilde{O} X^*)_j$ is the optimal strategy in the single-asset model that has the resilience $\widetilde{\rho}_{j,j}$, the price impact $\widetilde{\gamma}_{j,j}$, the initial position $(\widetilde{O} x)_j$, and the initial deviation $(\widetilde{O} d)_j$. 
\end{remark}

Observe that if $d=0$ in the setting of\footnote{or in the subsetting of \cref{set:OW} where $\gamma=\tilde{\gamma} I_n$ for some $\tilde \gamma \in (0,\infty)$} 
\cref{cor:OWsymcommute}, 
then the optimal strategy does not depend on the price impact~$\gamma$.  
However, the optimal strategy can still be influenced by cross-effects between the assets. This is due to the resilience~$\rho$. 
Moreover, although the price impact $\gamma$ does not affect the optimal strategy, it has an impact on the optimal costs. 
Furthermore, note that if, in addition, $\rho$ is a diagonal matrix, then we obtain that the optimal strategy is composed of the optimal strategies for the respective single-asset models. In the next example we  consider such a situation and analyze how the execution costs differ depending on whether cross-impact is taken into account.

\begin{ex}\label{ex:simpleres}
	Within \cref{set:OW} suppose that\footnote{The assumption that $\rho=\wt\rho I_2$ is crucial for our analysis.} $n=2$, $x_1\ne 0 \ne x_2$, $d=0$, $\wt{\rho} \in (0,\infty)$, $\rho=\wt\rho I_2$, and 
	\begin{equation}\label{eq:gamma_123}
		\gamma = 
		\begin{pmatrix}
			\gamma_1 & \gamma_3\\
			\gamma_3 & \gamma_2
		\end{pmatrix}
		,
	\end{equation}
	where $\gamma_1,\gamma_2,\gamma_3 \in \R$ are chosen in such a way that $\gamma \in \cS_{>0}^2$.
	For every $j \in \{1,2\}$ let 
	$X_j=(X_j(s))_{s\in[0,T]}$ be defined by $X_j(0-)=x_j$, $X_j(T)=0$, and 
	\begin{equation*}
		X_j(s) = \frac{\big(1+(T-s)\wt\rho \big) x_j}{2+T\wt\rho}
		,\quad s \in[0,T). 
	\end{equation*}
	Note that for all $j\in\{1,2\}$ it holds that $X_j$ is the optimal strategy in the single-asset model with price impact $\gamma_j$ and resilience $\wt\rho$. 
	Here, we moreover have that $X=(X_1,X_2)^\top \in \cA^{fv}$ is the optimal strategy in the multi-asset model (cf.\ \cref{cor:OWsymcommute}).
	Furthermore, it follows from \cref{cor:soln_OW}  
	that $\mathscr{Y}(s) = (2+(T-s)\wt\rho)^{-1} I_2$, $s\in[0,T]$.
	We thus have from \eqref{eq:optcosts_trade_execution_soln} 
	that the costs of the strategy $X$ 
	are given by 
	\begin{equation*}
		\begin{split}
			x^\top \gamma^{\frac12} \mathscr{Y}(0) \gamma^{\frac12} x
			& = \frac{x^\top \gamma x}{2+T\wt\rho} 
			= \frac{ \gamma_1 x_1^2 + 2 \gamma_3 x_1 x_2 + \gamma_2 x_2^2}{2+T\wt\rho} 
			.
		\end{split}
	\end{equation*}
	An agent who believes that there is no cross-impact, that is, $\gamma_3=0$, computes the costs 
	$(\gamma_1 x_1^2 + \gamma_2 x_2^2)(2+T\wt\rho)^{-1}$, which differ from the real costs by the term $2 \gamma_3 x_1 x_2$ in the numerator. 
	In particular, the costs that take cross-impact into account can be higher or smaller than the no-cross-impact costs, depending on the sign of the cross-impact term~$\gamma_3$ between the assets. 
\end{ex}

\subsection{Cross-effects from the resilience}
\label{sec:appendixcrossresilience}

We continue the discussion of \cref{ex:crossingzero}. 

\begin{ex}\label{ex:crossingzeroappendix}
	Resume the setting of \cref{ex:crossingzero} and
	suppose that\footnote{The case $x_1<0$ can be analyzed analogously with obvious changes.} $x_1> 0$. 
	
	To further analyze the strategy in the second asset, 
	note that by using the facts that $\rho_1,\rho_2>0$ and $\rho_1\rho_2-\rho_3^2>0$ we see that the denominator in the expression for $X^*_2$ in~\eqref{eq:1002ex} is always positive. 
	If $\rho_3$ is positive, then it is optimal to, in the second asset, jump to the positive position 
	\begin{equation*}
		X^*_2(0) = 
		\frac{T  \rho_3 x_1 }{(2+T\rho_1)(2+T\rho_2)-T^2\rho_3^2}
	\end{equation*}
	and to subsequently sell at a constant rate while crossing the position $0$ at the time $T/2$, and to end with a block buy trade of the size 
	\begin{equation*}
		\lvert \Delta X^*_2(T)\rvert = 
		\frac{T  \rho_3 x_1 }{(2+T\rho_1)(2+T\rho_2)-T^2\rho_3^2} .
	\end{equation*} 
	In the case that $\rho_3$ is negative (see also \cref{fig:crossingzeronegrho3}), 
	the optimal strategy entails for the second asset 
	to jump to a negative position, to then buy at a constant rate, switching to a positive position after the first half of the trading period, and to close the position at the terminal time by a sell block trade. 
	
	For the first component of the optimal strategy, note that the trading rate during $[0,T)$ is given by 
	\begin{equation}\label{eq:1308ex}
		\left\lvert \frac{\big[-\rho_1(2+T\rho_2)+T\rho_3^2\big] x_1}{(2+T\rho_1)(2+T\rho_2)-T^2\rho_3^2 } \right\rvert 
	\end{equation} 
	and the initial and terminal block trades are given by
	\begin{equation}\label{eq:1311ex}
		\begin{split}
			\Delta X_1^*(0) 
			& = - \frac{(2+T\rho_2)x_1}{(2+T\rho_1)(2+T\rho_2)-T^2\rho_3^2 }
			= 
			\Delta X_1^*(T) .
		\end{split}
	\end{equation}
	The fact that $-\rho_1(2+T\rho_2)+T\rho_3^2=T(\rho_3^2-\rho_1\rho_2)-2\rho_1<0$ therefore shows that the optimal strategy in the first asset is a pure sell strategy. 
	In particular, the position in the first asset does not become negative. 
	Moreover, note that the strategy in the first asset does not depend on the sign of $\rho_3$. 
	
	Observe that \cref{cor:OWsymcommute}, $d=0$,  and the fact that $\rho$ and $\gamma$ commute show that 
	the deviation $D^*$ associated to $X^*$ is given by 
	\begin{equation*}
		\begin{split}
			D^*(s) & = -(2I_2+T\rho)^{-1} \gamma x, \quad s\in[0,T),
			\quad 
			D^*(T) = -2(2I_2+T\rho)^{-1} \gamma x .
		\end{split}
	\end{equation*}
	This and $x=(x_1,0)^\top$ imply for all $s\in[0,T)$ that 
	\begin{equation}\label{eq:1112ex}
		\begin{split}
			D_1^*(s) & = \frac{\big[T\rho_3 \gamma_3-(2+T\rho_2)\gamma_1 \big] x_1}{(2+T\rho_1)(2+T\rho_2)-T^2\rho_3^2 } , \quad
			D_2^*(s) = \frac{\big[T\rho_3 \gamma_1 - (2+T\rho_1)\gamma_3 \big] x_1}{(2+T\rho_1)(2+T\rho_2)-T^2\rho_3^2 } ,
		\end{split}
	\end{equation}
	where we use for $\gamma$ the same notation as in~\eqref{eq:gamma_123}. 
	
	In the following, let us consider the subsetting where $\gamma_3=0$ and $\rho_3>0$. 
	Then, \eqref{eq:1112ex} shows that the deviation in the first asset is a negative constant during $[0,T)$ and that the deviation in the second asset is a positive constant during $[0,T)$. 
	Note that the price in the first asset during $[0,T]$ is even lower than it would have been for $\rho_3=0$. 
	To mitigate the lower price in the first asset, the selling in the first asset during $[0,T)$ happens at a slower rate than in the case $\rho_3=0$ (cf.~\eqref{eq:1308ex} and~\eqref{eq:1311ex}). 
	The block trades at the initial time $0$ and at the terminal time $T$ in the first asset both have a larger size than in the case $\rho_3=0$ (cf.~\eqref{eq:1311ex}).  	
	In the second asset, where without $\rho_3$ there would have been no trading and thus no induced change in the price, we obtain that the initial buy trade drives the price up. The increased price in the second asset is then exploited by selling in the second asset during $[0,T)$. One even ``oversells'' and has to buy back at the terminal time $T$.
	
	\begin{SCfigure}
		\caption{The optimal strategy in the setting of \cref{ex:crossingzero} for the specific values $T=1$, $x=(100,0)^\top$, $\rho_1=2=\rho_2$, $\rho_3=-1$, and $\gamma=I_2$.}
		\label{fig:crossingzeronegrho3}
		\includegraphics[scale=0.75]{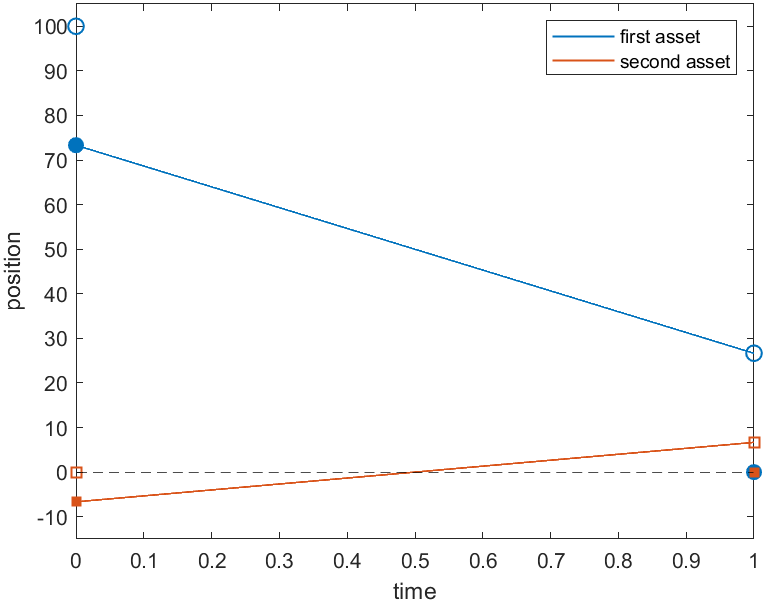}
	\end{SCfigure}
\end{ex}

\begin{remark}
	The effect that trading in an asset can become optimal despite a zero objective in that asset has also been observed by, for example, Abi Jaber et al.~\cite{jaber2024optimal} and Tsoukalas et al.~\cite{TsoukalasGieseckeWang2019}. 
	In particular, note that the optimal position in~\cite[Figure~2]{jaber2024optimal} displays a similar behavior as our optimal position in \cref{fig:crossingzeronegrho3}. 
	Some visible differences come from the fact that Abi Jaber et al.\ optimize over absolutely continuous positions and do not require strict liquidation, but enforce it via a penalization of the terminal position in their cost functional. 
	But also the mechanisms behind the similar observations differ: 
	In our example, the cross-resilience in~$\rho$ is crucial, whereas in their example, they have what corresponds to $\rho=\wt{\rho}I_2$ in our model and the important ingredient is the cross-impact. 
	In the discrete-time multi-asset Obizhaeva--Wang model studied in \cite{TsoukalasGieseckeWang2019}, the cross-effects come from the permanent price impact part and from possible correlations in the fundamental prices, which affect the model via risk-aversion, whereas the resilience and the transient price impact are diagonal. 
	The cross-hedging observation in \cite{TsoukalasGieseckeWang2019} is made in situations where the cross-impact is symmetric and there is risk-aversion, or in situations where the cross-impact is asymmetric (see \cite[Section~4.3.1]{TsoukalasGieseckeWang2019}).
\end{remark}

\subsection{Cross-effects from the risk term}
\label{sec:example_risk}

In this subsection we consider the following 
subsetting of \cref{sec:setting}. 

\begin{setting}\label{set:crossrisk}
	Let $\xi=0$, $\zeta=0$ $\mu = 0$, $\sigma = 0$ and choose a deterministic, constant $\risk \in \cS_{>0}^n$. 
	Let $\rho \in \cS_{>0}^n$ (deterministic, constant) and the orthogonal matrix $\OO \in \R^{n\times n}$ be such that $\rho$ and $\gamma$ commute. 
	Furthermore, assume that $d\ne \gamma(0) x$ and  
	let \cref{assump_filtration} be satisfied.
\end{setting}

Note that in \cref{set:crossrisk} it holds that $\mathscr{Q} \in \cS_{\ge 0}^n$ and that there exists $\delta \in (0,\infty)$ such that $\kap - \delta I_n = \rho - \delta I_n \in \cS_{\ge 0}^n$. 
In particular, \cref{assump_convexity} is satisfied in~\cref{set:crossrisk}. 
We thus have that given \cref{set:crossrisk} the assumptions of \cref{cor:soln_opt_trade_execution} are met. 
Hence, in \cref{set:crossrisk} we can apply \cref{cor:soln_opt_trade_execution} and obtain that there exists a unique optimal strategy in $\cA^{pm}$ for 
$\pmJ$. 
Recall that the optimal strategy is given in terms of the solution of the BSDE~\eqref{eq:BSDE}. 
We have that there exists a unique solution of the matrix Riccati ordinary differential equation 
\begin{equation}\label{eq:ODE_exa_risk}
	\begin{split}
		\frac{dY(s)}{ds} 
		& = - \big( \mathscr{Q} 
		- \big(Y(s)\rho+\mathscr{Q} \big) 
		\big(\mathscr{Q}+\rho\big)^{-1} 
		\big(\rho Y(s) + \mathscr{Q} \big) \big), 
		\quad s \in [0,T],  \quad 
		Y(T) = \tfrac12 I_n
	\end{split}
\end{equation}
(cf., for example, \cite[Corollary~2.10]{yong1999stochasticcontrols}). 
Therefore, in \cref{set:crossrisk} the unique solution of the BSDE~\eqref{eq:BSDE} is given by 
$(\mathscr{Y},0)$ where $\mathscr{Y}\equiv Y$ is the deterministic solution of~\eqref{eq:ODE_exa_risk}. 

In the following example we showcase cross-effects of the risk term on the optimal strategy. 
To this end, we take all other model components in \cref{set:crossrisk} to be rather simple. 

\begin{ex}\label{ex:opt_strat_risk}
	Within \cref{set:crossrisk}, let 
	$n=2$, $d=0$, $x=(100,0)^\top$, $\lambda_1(0)=1=\lambda_2(0)$, $\OO = I_2$, and $\rho=3I_2$. 
	Furthermore, consider the positive definite matrix 
	\begin{equation}\label{eq:opt_strat_risk_def_risk}
		\risk = \begin{pmatrix}
			1 & 0.5\\
			0.5 & 1
		\end{pmatrix}
		.
	\end{equation}
	If the off-diagonal elements in $\risk$ were $0$, then the optimal strategy would be composed of the optimal strategies in the respective single-asset problems. 
	Observe that in the single-asset problem with initial position~$0$, initial deviation~$0$, price impact~$1$, resilience~$3$, and risk coefficient~$1$, the optimal strategy is to stay in the position $0$ (cf., for example, \cref{lem:closing_immed}). 
	Therefore, with $0.5$ replaced by $0$ in \eqref{eq:opt_strat_risk_def_risk}, the optimal strategy would entail no trading in the second asset. 
	However, with $\risk$ chosen as in \eqref{eq:opt_strat_risk_def_risk}, we see from 
	\cref{fig:opt_strat_risk} that it is optimal to 
	take on a negative position in the second asset with an initial sell block trade and to buy back the missing amount of shares over $(0,T]$. In particular, we see how cross-hedging strategies can be implemented in our model through a suitable choice of $\risk$.
	The deviation associated to this optimal strategy is shown in \cref{fig:opt_dev_risk}. 
	\begin{SCfigure}
		\caption{The optimal strategy in the setting of \cref{ex:opt_strat_risk}, where $T=1$, $x=(100,0)^\top$, $\rho=3I_2$, $\gamma=I_2$, and with $\risk$ defined in \eqref{eq:opt_strat_risk_def_risk}.}
		\label{fig:opt_strat_risk}
		\includegraphics[scale=0.75]{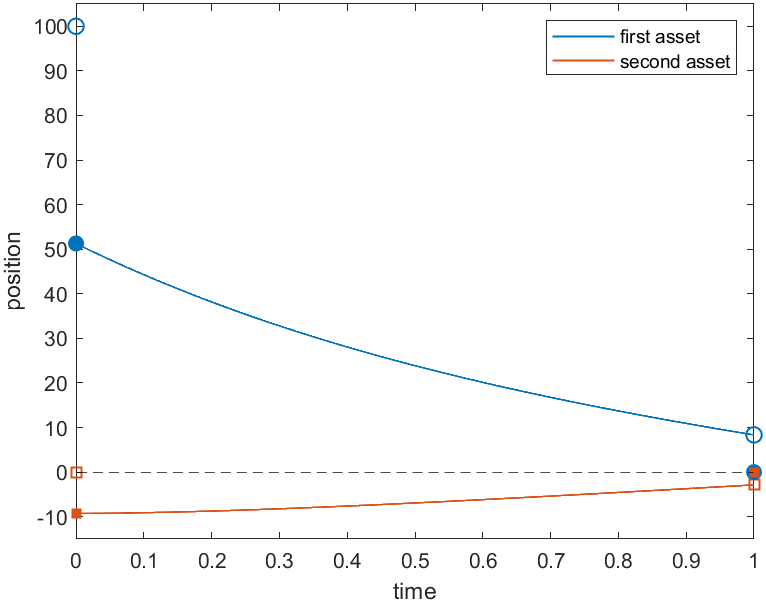}
	\end{SCfigure}
	\begin{SCfigure}
		\caption{The deviation associated to the optimal strategy of \cref{fig:opt_strat_risk} in the same setting as in \cref{fig:opt_strat_risk}.}
		\label{fig:opt_dev_risk}
		\includegraphics[scale=0.75]{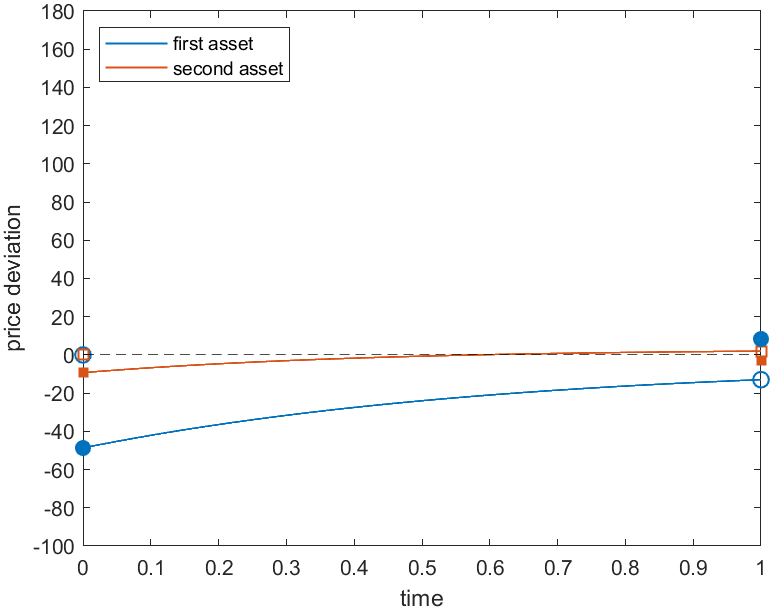}
	\end{SCfigure}
\end{ex}

\subsection{Cross-effects from the price impact}
\label{sec:example_gamma}

In this subsection we consider the following 
subsetting of \cref{sec:setting}. 

\begin{setting}\label{set:crossimpact}
	Let $(\wt{\rho}_1,\dots,\wt{\rho}_n)^\top \in \R^n$, $\mu \in \R^n$, and $\sigma \in \R^{n\times m}$ satisfy for all $j\in\{1,\dots,n\}$ that $2\wt{\rho}_j+\mu_j-\sum_{k=1}^m \sigma_{j,k}^2 >0$, 
	and assume that $\rho=\OO^\top \diag(\wt{\rho}_1,\dots,\wt{\rho}_n) \OO$. 
	Let $\risk=0$, $\xi=0$, and $\zeta=0$. 
	Suppose that \cref{assump_filtration} holds and that $d\ne \gamma(0) x$.
\end{setting}

\begin{lemma}\label{lem:example_cross_impact_assump_satisfied}
	Assume \cref{set:crossimpact}. 
	Then \cref{assump_convexity} is satisfied.  
\end{lemma} 

\begin{proof}
	Note that $\gamma=\OO^\top \diag(\lambda_1,\dots,\lambda_n) \OO$ and $\rho=\OO^\top \diag(\wt{\rho}_1,\dots,\wt{\rho}_n) \OO$ commute. 
	In particular, this yields that $\gamma^{-\frac12} \rho \gamma^{\frac12}$ is $dP\times ds$-a.e.\ bounded 
	and that 
	\begin{equation*}
		\begin{split}
			\kap 
			& = \frac12 \OO^\top \bigg( 2 \diag(\wt{\rho}_1,\dots,\wt{\rho}_n) + \diag(\mu_1,\dots,\mu_n) - \sum_{k=1}^m \diag(\sigma_{1,k}^2,\dots,\sigma_{n,k}^2) \bigg) \OO
		\end{split}
	\end{equation*}
	(cf.~\cref{rem:remark_on_setting_assumptions2}). 
	This representation for $\kap$, the fact that for all $j\in\{1,\dots,n\}$ it holds that $2\wt{\rho}_j+\mu_j-\sum_{k=1}^m \sigma_{j,k}^2 >0$, and $\risk=0$ 
	demonstrate that there exists $\delta \in (0,\infty)$ such that $\KK-\delta I_n = \kap - \delta I_n$ is positive semidefinite. 
	Hence, \cref{assump_convexity} is satisfied (cf.~\cref{rem:sufficient_cond_convex}).
\end{proof}

Due to \cref{lem:example_cross_impact_assump_satisfied} and 
\cref{cor:soln_opt_trade_execution} we obtain that in \cref{set:crossimpact} there exists a unique optimal strategy in $\cA^{pm}$ for 
$\pmJ$. 
Further, note that in \cref{set:crossimpact} the unique solution of the BSDE~\eqref{eq:BSDE} is given by $(\mathscr{Y},0)$ where $\mathscr{Y}\equiv Y$ is the unique solution of the matrix Riccati ordinary differential equation 
\begin{equation}\label{eq:0106}
	\begin{split}
		\frac{dY(s)}{ds} & = 
		- Y(s) \mathscr{A} - \mathscr{A} Y(s) 
		- \sum_{k=1}^m \mathscr{C}^k Y(s) \mathscr{C}^k  
		+ \bigg( Y(s) \mathscr{B} - 2 \sum_{k=1}^m \mathscr{C}^k Y(s) \mathscr{C}^k  \bigg) \\
		& \quad \cdot \bigg( \kap + 4 \sum_{k=1}^m \mathscr{C}^k Y(s) \mathscr{C}^k  \bigg)^{-1} 
		\bigg( \mathscr{B}^\top Y(s) - 2 \sum_{k=1}^m \mathscr{C}^k Y(s) \mathscr{C}^k \bigg), \quad s\in[0,T], \\
		Y(T) & =\tfrac12 I_n 
	\end{split}
\end{equation}
(cf., for example, \cite[Theorem~7.2]{yong1999stochasticcontrols}).

We first present a deterministic example which illustrates cross-effects of the price impact~$\gamma$ on the optimal strategy. 

\begin{ex}\label{ex:opt_strat_gamma}
	Within \cref{set:crossimpact} let 
	$n=2$, $d=0$, $x=(100,0)^\top$, $\lambda_1(0)=1=\lambda_2(0)$, $\wt{\rho}_1=1=\wt{\rho}_2$, $\sigma=0$,  $\mu=(3,1)^\top$, and 
	\begin{equation}\label{eq:OO_in_opt_strat_gamma}
		\OO = \frac15 \begin{pmatrix}
			3& 4\\
			-4& 3 
		\end{pmatrix}
		.
	\end{equation}
	Then, the resilience is given by $\rho=I_2$ and the  price impact is given by 
	\begin{equation}\label{eq:gamma_in_opt_strat_gamma}
		\begin{split}
			\gamma(s)=\OO^\top \diag(e^{3s}, e^{s}) \OO
			& = \frac{1}{25} 
			\begin{pmatrix}
				9 e^{3s} + 16 e^{s} & 12( e^{3s} - e^{s}) \\
				12( e^{3s} - e^{s}) & 16 e^{3s} + 9 e^{s}
			\end{pmatrix}
			, \quad s \in [0,T].
		\end{split}
	\end{equation}
	In particular, $\gamma$ at the initial time is equal to the diagonal matrix~$I_2$, but at later times contains entries on the off-diagonal. 
	We thus have cross-impact. As shown in \cref{fig:opt_strat_gamma}, in the setting of the present example it is  optimal to trade in the second asset despite the initial position $0$ in this asset. 
	\begin{SCfigure}
		\caption{The optimal strategy in the setting of \cref{ex:opt_strat_gamma}, where $T=1$, $x=(100,0)^\top$, $\risk=0$, $\rho=I_2$, and with $\gamma$ from \eqref{eq:gamma_in_opt_strat_gamma}.}
		\label{fig:opt_strat_gamma}
		\includegraphics[scale=0.75]{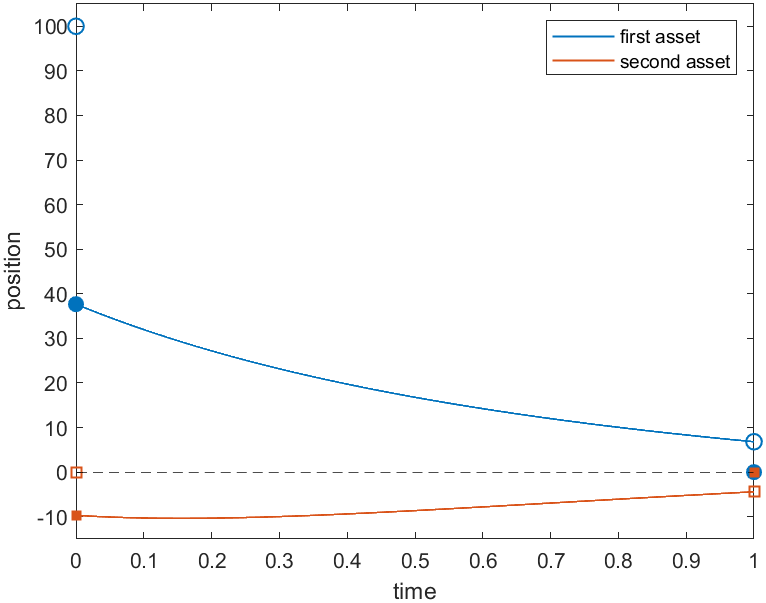}
	\end{SCfigure}
	To explain this behavior, note that after the initial time there is a positive cross-impact between both assets. This means that selling in the first asset leads to lower prices not only in the first asset, but also in the second asset. 
	The lower prices in the second asset are exploited by implementing a buy program in the second asset. 
	Moreover, observe that buying in the second asset increases the prices also in the first asset, which is beneficial for the sell task in the first asset.  According to \cref{fig:opt_dev_gamma}, the optimal strategy leads to stable prices in the sense that the deviation inside the trading interval stays constant.
	\begin{SCfigure}
		\caption{The deviation associated to the optimal strategy of \cref{fig:opt_strat_gamma} in the same setting as in \cref{fig:opt_strat_gamma}.}
		\label{fig:opt_dev_gamma}
		\includegraphics[scale=0.75]{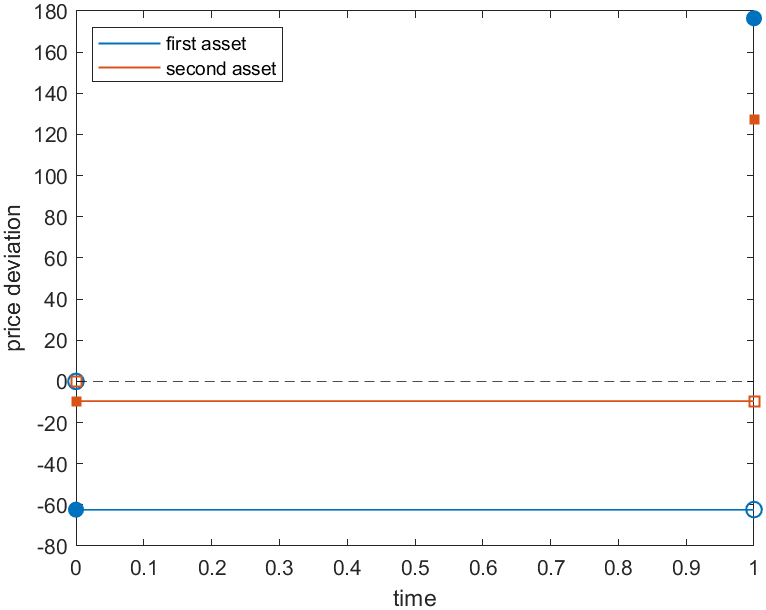}
	\end{SCfigure}
\end{ex}

We can actually show that the optimal deviation is constant on $(0,T)$ in any subsetting of \cref{set:crossimpact} that satisfies $\sigma=0$:

\begin{lemma}\label{lem:example_cross_impact_constant_dev}
	Assume \cref{set:crossimpact} with $\sigma=0$. 
	It then holds that the deviation $D^*$ associated to the optimal strategy $X^*$ is constant on $(0,T)$. 
	Moreover, we have that $X^*\in\cA^{fv}$. 
\end{lemma}

\begin{proof}
	Note that in \cref{set:crossimpact} with $\sigma=0$ we have that $\mathscr{C}^k = 0$, $k\in\{1,\dots,m\}$, 
	$\mathscr{A} = \frac12 \OO^\top \diagmu \OO$, 
	$\KK = \kap = \rho + \mathscr{A}$, 
	and $\mathscr{B} = -\rho - 2 \mathscr{A}$. 
	Therefore, the function $Y$ in~\eqref{eq:0106} satisfies 
	\begin{equation}\label{eq:1417a}
		\frac{dY(s)}{ds}
		= -Y(s) \mathscr{A} - \mathscr{A} Y(s) 
		+ Y(s) (\rho + 2\mathscr{A}) (\rho+\mathscr{A})^{-1} (\rho+2\mathscr{A}) Y(s), \quad s\in[0,T], 
	\end{equation}
	the process $\theta$ defined in \eqref{eq:def_theta} is deterministic and satisfies 
	\begin{equation}\label{eq:1417b}
		\theta(s) = (\rho+\mathscr{A})^{-1} (\rho+2\mathscr{A}) Y(s), \quad s\in[0,T], 
	\end{equation}
	and the optimal state $\scH^*$ is deterministic and satisfies 
	\begin{equation}\label{eq:1417c}
		\frac{d\scH^*(s)}{ds}
		= \big(\mathscr{A}-(\rho+2\mathscr{A})(\rho+\mathscr{A})^{-1}(\rho+2\mathscr{A}) Y(s) \big) \scH^*(s), \quad s\in[0,T].
	\end{equation}
	Moreover, the optimal deviation on $[0,T)$ is given by 
	$D^*(s)=\gamma^{\frac12}(s)\theta(s)\scH^*(s)$, $s\in[0,T)$. 
	Note that $\gamma^{\frac12}$, $\theta$, and $\scH^*$ all are of finite variation in the setting of the present example and therefore $D^*$ is of finite variation. 
	Integration by parts yields for all $s\in(0,T)$ that 
	\begin{equation*}
		\begin{split}
			d D^*(s) & = (d\gamma^{\frac12}(s)) \theta(s) \scH^*(s) 
			+ \gamma^{\frac12}(s) (d\theta(s)) \scH^*(s)
			+ \gamma^{\frac12}(s) \theta(s) d \scH^*(s) .
		\end{split}
	\end{equation*}
	We combine this, \cref{lemma:ito_to_lambda}, \eqref{eq:1417a}, \eqref{eq:1417b}, and \eqref{eq:1417c} to obtain for all $s\in(0,T)$ that
	\begin{equation}\label{eq:1458a}
		\begin{split}
			d D^*(s) & = \OO^\top \lam^{\frac12}(s) \tfrac12 \diagmu \OO \theta(s) \scH^*(s) ds 
			+ \gamma^{\frac12}(s) (\rho+\mathscr{A})^{-1} (\rho+2\mathscr{A}) (dY(s)) \scH^*(s) \\
			& \quad + \gamma^{\frac12}(s) \theta(s) \big(\mathscr{A}-(\rho+2\mathscr{A})(\rho+\mathscr{A})^{-1}(\rho+2\mathscr{A}) Y(s) \big) \scH^*(s) ds \\
			& = \gamma^{\frac12}(s) \mathscr{A} \theta(s) \scH^*(s) ds
			+ \gamma^{\frac12}(s) (\rho+\mathscr{A})^{-1} (\rho+2\mathscr{A}) \\
			& \quad \cdot \big(-Y(s) \mathscr{A} - \mathscr{A} Y(s) 
			+ Y(s) (\rho + 2\mathscr{A}) (\rho+\mathscr{A})^{-1} (\rho+2\mathscr{A}) Y(s)\big) \scH^*(s) ds \\
			& \quad + \gamma^{\frac12}(s) \theta(s) 
			\big(\mathscr{A}-(\rho+2\mathscr{A})(\rho+\mathscr{A})^{-1}(\rho+2\mathscr{A}) Y(s) \big) \scH^*(s) ds \\
			& = \gamma^{\frac12}(s) \mathscr{A} \theta(s) \scH^*(s) ds
			- \gamma^{\frac12}(s) (\rho+\mathscr{A})^{-1} (\rho+2\mathscr{A}) 
			\mathscr{A} Y(s) \scH^*(s) ds \\
			& \quad + \gamma^{\frac12}(s) \theta(s) \big(-\mathscr{A} + (\rho + 2\mathscr{A}) (\rho+\mathscr{A})^{-1} (\rho+2\mathscr{A}) Y(s) \big) \scH^*(s) ds \\
			& \quad + \gamma^{\frac12}(s) \theta(s) 
			\big(\mathscr{A}-(\rho+2\mathscr{A})(\rho+\mathscr{A})^{-1}(\rho+2\mathscr{A}) Y(s) \big) \scH^*(s) ds \\
			& = \gamma^{\frac12}(s) 
			\big( \mathscr{A} (\rho+\mathscr{A})^{-1} (\rho+2\mathscr{A})  
			- (\rho+\mathscr{A})^{-1} (\rho+2\mathscr{A}) 
			\mathscr{A} \big)  Y(s) \scH^*(s) ds . 
		\end{split}
	\end{equation}
	Furthermore, it holds that 
	\begin{equation}\label{eq:1458b}
		\begin{split}
			& \mathscr{A} (\rho+\mathscr{A})^{-1} (\rho+2\mathscr{A})  
			- (\rho+\mathscr{A})^{-1} (\rho+2\mathscr{A}) 
			\mathscr{A} \\
			& = \mathscr{A} + \mathscr{A}  (\rho+\mathscr{A})^{-1} \mathscr{A}   
			- 
			\mathscr{A} 
			- (\rho+\mathscr{A})^{-1} \mathscr{A}  
			\mathscr{A} \\
			& = \mathscr{A}  (\rho+\mathscr{A})^{-1} \mathscr{A} 
			- (\rho+\mathscr{A})^{-1} \mathscr{A}  
			\mathscr{A} . 
		\end{split}
	\end{equation}
	Observe that 
	\begin{equation*}
		\begin{split}
			(\rho+\mathscr{A})^{-1} \mathscr{A} 
			& = \tfrac12 \OO^\top \big( \diag(\wt{\rho}_1,\dots,\wt{\rho}_n) + \tfrac12 \diagmu \big)^{-1} \OO  \OO^\top \diagmu \OO \\
			& =  \tfrac12 \OO^\top \diagmu \big( \diag(\wt{\rho}_1,\dots,\wt{\rho}_n) + \tfrac12 \diagmu \big)^{-1} \OO \\
			& = \mathscr{A} \OO^\top \big( \diag(\wt{\rho}_1,\dots,\wt{\rho}_n) + \tfrac12 \diagmu \big)^{-1} \OO 
			= \mathscr{A} (\rho+\mathscr{A})^{-1} .
		\end{split}
	\end{equation*}
	This, \eqref{eq:1458a}, and \eqref{eq:1458b} demonstrate that $D^*$ is constant on $(0,T)$. 
	Since $\gamma^{-\frac12}$, $\theta$, and $\scH^*$ here all are of finite variation, 
	we obtain that the optimal strategy, which is given by $X^*(0-)=0$, $X^*(T)=0$, and $X^*(s)=\gamma^{-\frac12}(s) (\theta(s) - I_n) \scH^*(s)$, $s\in[0,T)$, has finite variation. 
	Further, \eqref{eq:A1} and~\eqref{eq:A2} are satisfied. 
	We thus conclude that $X^*\in\cA^{fv}$.
\end{proof}

Note that the price impact $\gamma$, the BSDE solution $\mathscr{Y}\equiv Y$, the auxiliary process $\theta$, the optimal state $\scH^*$, the optimal strategy $X^*$, and the optimal deviation $D^*$ in \cref{lem:example_cross_impact_constant_dev} are deterministic.
To point out that our model allows for stochastic liquidity, we next provide a simulation of the optimal strategy in a setting with cross-impact where the price impact $\gamma$ is a (non-deterministic) stochastic process and where the resulting optimal strategy is no longer deterministic. 

\begin{ex}\label{ex:opt_strat_sigma}
	\begin{SCfigure}
		\caption{The optimal strategy in the setting of \cref{ex:opt_strat_sigma}, where $T=1$, $x=(100,0)^\top$, $\risk=0$, $\rho=I_2$, and with $\gamma$ from \eqref{eq:gamma_in_opt_strat_sigma}.}
		\label{fig:opt_strat_sigma}
		\includegraphics[scale=0.75]{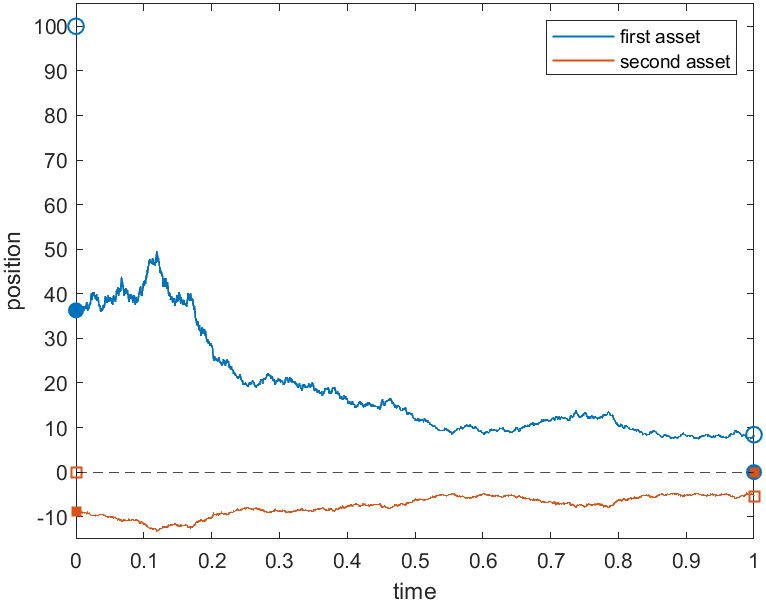}
	\end{SCfigure}
	\begin{SCfigure}
		\caption{The deviation associated to the optimal strategy of \cref{fig:opt_strat_sigma} in the same setting as in \cref{fig:opt_strat_sigma}.}
		\label{fig:opt_dev_sigma}
		\includegraphics[scale=0.75]{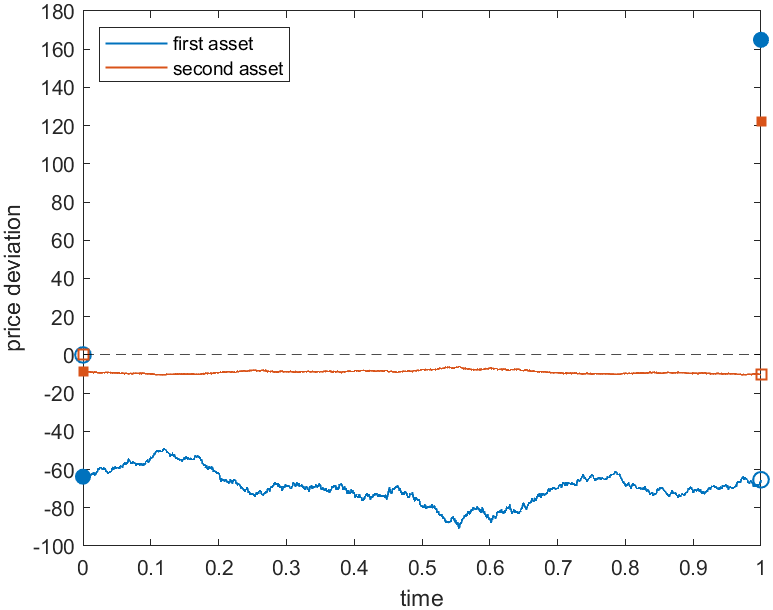}
	\end{SCfigure}
	Within \cref{set:crossimpact} let 
	$n=2$, $m=1$, $d=0$, $x=(100,0)^\top$, $\lambda_1(0)=1=\lambda_2(0)$, $\wt{\rho}_1=1=\wt{\rho}_2$, $\mu=(3,1)^\top$, and $\sigma=(1,1)^\top$. 
	Furthermore, let $\OO$ be defined as in \eqref{eq:OO_in_opt_strat_gamma} in \cref{ex:opt_strat_gamma}. 
	Note that the only difference in the present set-up to the one in \cref{ex:opt_strat_gamma} is that we now include a non-zero $\sigma$. 
	The price impact $\gamma$ is then given by 
	\begin{equation}\label{eq:gamma_in_opt_strat_sigma}
		\begin{split}
			\gamma(s) 
			& = \frac{1}{25}
			\begin{pmatrix}
				9 e^{3 s} + 16 e^{s} & 12 ( e^{3 s} - e^{s} ) \\
				12( e^{3s} - e^{s} ) & 16 e^{3s} + 9 e^{s}
			\end{pmatrix}
			e^{W_1(s)-\frac{s}{2}}
			, \quad s \in [0,T].
		\end{split}
	\end{equation}
	Observe that the price impact $\gamma$ and the optimal state $\scH^*$ are non-deterministic, whereas the solution of the BSDE~\eqref{eq:BSDE} and $\theta$ in \eqref{eq:def_theta} are still deterministic. 
	The optimal strategy (see \cref{fig:opt_strat_sigma}) and the associated deviation process (see \cref{fig:opt_dev_sigma}) are non-deterministic.
	In particular, the deviation process is not constant on $(0,T)$.
\end{ex}

\end{document}